\providecommand{\keywords}[1]{\textbf{\textit{Keywords---}} #1}
\title{The influence of parasitic modes on ``weakly'' unstable multi-step Finite Difference schemes}
\author{\textbf{Thomas Bellotti}\footnote{Former affiliation: CMAP, CNRS, \'Ecole polytechnique, Institut Polytechnique de Paris, 91120 Palaiseau, France.} (\href{thomas.bellotti@math.unistra.fr}{\texttt{thomas.bellotti@math.unistra.fr}})\\
IRMA, Universit\'e de Strasbourg, 67000 Strasbourg, France}
\begin{document}

\maketitle

\begin{abstract}
    Numerical analysis for linear constant-coefficients Finite Difference schemes was developed approximately fifty years ago.
    It relies on the assumption of scheme stability and in particular---for the $L^2$ setting---on the absence of multiple roots of the amplification polynomial on the unit circle.
    This allows to decouple, while discussing the convergence of the method, the study of the consistency of the scheme from the precise knowledge of its parasitic/spurious modes, so that multi-step methods can be studied essentially as they were one-step schemes. 
    In other words, the global truncation error can be inferred from the local truncation error.
    Furthermore, stability alleviates the need to delve into the complexities of floating-point arithmetic on computers, which can be challenging topics to address.
    In this paper, we show that in the case of ``weakly'' unstable schemes with multiple roots on the unit circle, although the schemes may remain stable, the consideration of parasitic modes is essential in studying their consistency and, consequently, their convergence. Otherwise said, the lack of genuine stability prevents bounding the global truncation error using the local truncation error, and one is thus compelled to study the former on its own.
    This research was prompted by unexpected numerical results on lattice Boltzmann schemes, which can be rewritten in terms of multi-step Finite Difference schemes. Initial expectations suggested that third-order initialization schemes would suffice to maintain the accuracy of a fourth-order multi-step scheme. However, this assumption proved incorrect for ``weakly'' unstable schemes. This borderline scenario underscores the significance of genuine stability in facilitating the construction of Lax-Richtmyer-like theorems and in mastering the impact of round-off errors. Despite the simplicity and apparent lack of practical usage of the linear transport equation at constant velocity considered throughout the paper, we demonstrate that high-order lattice Boltzmann schemes for this equation can be used to tackle non-linear systems of conservation laws relying on a Jin-Xin approximation and high-order splitting formul\ae{}.
\end{abstract}

\keywords{Finite Difference; multi-step; lattice Boltzmann; weak instabilities; order of convergence; relaxation system; operator splitting}


\section*{Introduction}

Multi-step constant-coefficients Finite Difference schemes feature several modes \cite[Chapter 2]{gustafsson1995time}---each one associated with one root of the amplification polynomial of the scheme. For scalar problems being first-order in the time derivative, whom we shall be concerned with in this paper, one physical mode and possibly several parasitic modes are therefore mixed together in the discrete solution.
As long as the scheme is stable for the $L^2$ norm, namely the roots are in the closed unit disk and those on the unit circle are simple, stability rules out any potential influence of the---indeed present---parasitic modes, as far as consistency is concerned.
This can be understood in the following way: let $\numberSteps + 1 \leq \indicesTime \leq \lfloor \finalTime / \timeStep \rfloor$, where $\numberSteps + 1$ is the number of steps of the multi-step scheme, $\finalTime > 0$ is the final time-horizon of the simulation, and $\timeStep$ is the time step. Denote $\solutionScheme^{\indicesTime}$ the solution of the multi-step scheme, obtained by\footnote{Using the hat to denote a Fourier transform.} $\fourierTransformed{\solutionScheme}^{\indicesTime} (\frequency) = \amplificationFactorFourier{\indicesTime} (\frequency\spaceStep)\fourierTransformed{\solutionScheme}^0(\frequency)$ with $|\frequency\spaceStep| \leq \pi$, where the amplification factors $\amplificationFactorFourier{\indicesTime}$ are determined by the multi-step scheme itself as well as by the initialization schemes.
Let $\rootAmplificationPolynomialFourier{1}$ be the physical root of the amplification polynomial: the only one such that $\rootAmplificationPolynomialFourier{1}(0) = 1$, and construct $\fourierTransformed{\solutionPseudoScheme}^{\indicesTime} (\frequency) = \rootAmplificationPolynomialFourier{1}(\frequency\spaceStep)^{\indicesTime} \fourierTransformed{\solutionScheme}^0(\frequency)$, called ``pseudo-scheme''.   
The crucial estimate to study the order of the overall method (\idEst{} taking the initialization into account) is 
\begin{equation}\label{eq:stabilityDefinition}
    \weightedLTwoNorm{\solutionScheme^{\indicesTime} - \solutionPseudoScheme^{\indicesTime}} \leq \constantStability \sum_{\indicesFreeTwo = 0}^{\numberSteps} \weightedLTwoNorm{\solutionScheme^{\indicesFreeTwo} - \solutionPseudoScheme^{\indicesFreeTwo}},
\end{equation}
thanks to stability.
The right-hand side of \eqref{eq:stabilityDefinition} solely depends on $\rootAmplificationPolynomialFourier{1}$---through $\solutionPseudoScheme^{\numberSteps}, \dots, \solutionPseudoScheme^{0}$---and on the initializations $\solutionScheme^{\numberSteps}, \dots, \solutionScheme^{0}$ \emph{via} $\amplificationFactorFourier{\numberSteps}, \dots, \amplificationFactorFourier{0}$: no influence of the parasitic roots $\rootAmplificationPolynomialFourier{2}, \dots, \rootAmplificationPolynomialFourier{\numberSteps + 1}$ of the amplification polynomial.
An important consequence of this, in the context where $\timeStep \propto \spaceStep$ as $\spaceStep \to 0$ and the target equation is the linear transport equation, is that a scheme of order $\maxOrderAccuracy$ can be initialized with methods of order $\maxOrderAccuracy-1$ without lowering the overall order of the method, see \cite[Theorem 10.6.2]{strikwerda2004finite}.
For concreteness, a leap-frog scheme (\idEst{} $\maxOrderAccuracy = 2$) can be started using a Lax-Friedrichs scheme, or even an unstable forward-Euler centered scheme. 

The situation turns out to be radically different when multiple roots lay on the unit circle.
These schemes, which we call ``weakly'' unstable, can be stable in practice, especially when the frequency at which multiple roots belong to the unit circle is the frequency $\frequency = 0$ only. Therefore, provided that the underlying floating point arithmetic is accurate enough, these schemes can be suitable for computations.
Genuine stability---\idEst{} \eqref{eq:stabilityDefinition}---requires control for arbitrary initial data $\solutionScheme^{\numberSteps}, \dots, \solutionScheme^{0}$ and thus cannot be established in these circumstances, due to the potential polynomial growth of the numerical solution in $\indicesTime$. Despite this, any ``reasonable'' initialization scheme---which yields a specific choice of initial data---renders a stable computation as long as it avoids exciting the unstable frequency.
Constraining the choice of initialization routines to well-chosen ones in order to achieve desired numerical properties---which we would otherwise unlikely obtain---has been studied in the context of linear multi-step schemes for ordinary differential equations \cite{hundsdorfer2003monotonicity}.
\textbf{The main finding and the subject we try to elucidate in the present paper is the following. In the weakly unstable framework, a scheme of order $\maxOrderAccuracy$ might need to be initialized with methods of the same order $\maxOrderAccuracy$ to preserve the overall order, contrarily to genuinely stable schemes.
This is due to the entangled role of the several modes allowed by the multi-step scheme---both physical and parasitic---in the consistency of the method, since the lack of stability forbids from synthesizing consistency merely in terms of the physical root.}
To the best of our knowledge, this observation is new in the available literature.

This conclusion has been accidentally drawn while trying to construct a fourth-order lattice Boltzmann scheme \cite{mcnamara88lattice} for the linear transport equation.
Being stable, we reasonably conjectured that it would have needed third-order initializations in order to preserve its overall fourth-order.
Surprisingly, despite the fact of having third-order initializations ready to use, numerical results featured orders of convergence stuck at three instead of the expected order four, without any trace of explosion of the numerical solution whatsoever.
This fact indicates that the initialization of stable lattice Boltzmann schemes must be handled with particular care.
The majority of the content in the present paper is likely to be essentially useless in applications and belatedly tweaks subjects that have been studied a long time ago, at the beginning of the 1970s \cite{brenner1970stability}. Still, it emphasizes the key role of stability in making the study of consistency simple, \idEst{} based just on the local truncation error of the bulk multi-step scheme, plus the independent study of the initialization routines (\confer{} \eqref{eq:stabilityDefinition}). 
Indeed, these parts of the contribution should be valued for their pedagogical role.
However, developing high-order lattice Boltzmann schemes for the linear transport equation can be of interest in applications.
Indeed, these explicit methods are praised for their computational efficiency and can be used to approximate the solution of non-linear systems of conservation laws by embedding them into approximations \cite{aregba2000discrete} of the Jin-Xin relaxation system \cite{jin1995relaxation}, using high-order symmetric operator splittings \cite{mclachlan2002splitting}, in the spirit of \cite{coulette2019high}. 

The paper is structured as follows.
\Cref{sec:originProblem} presents the origin of the study from an empirical observation, where we found unexpected orders of convergence.
Theory is developed in \Cref{sec:understandingTheory} to understand these results.
\Cref{sec:roundoffErrors} brings the numerical schemes back on the ``battlefield'' by investigating the role of floating-point arithmetic and thus of round-off errors.
Finally, \Cref{sec:applicationKinetic} presents an application of the fourth-order solver for the linear transport equation to the approximation of non-linear equations.

The model system we consider in the paper is the Cauchy problem associated with the linear transport equation at velocity $\advectionVelocity \in \reals$, which reads
\begin{align}
    \partial_{\timeVariable} \solution(\timeVariable, \spaceVariable) + \advectionVelocity \partial_{\spaceVariable} \solution(\timeVariable, \spaceVariable) &= 0, \qquad &&\timeVariable \in (0, \finalTime], \quad &&\spaceVariable\in \reals, \label{eq:transportEquation} \\
    \solution(0, \spaceVariable) &= \initialDatum (\spaceVariable), &&\qquad &&\spaceVariable \in \reals. \label{eq:initialDatum}
\end{align}
The initial datum $\initialDatum$ is assumed to be a given smooth function, unless otherwise said.
For the problem is linear, we conveniently consider the Fourier transform of \eqref{eq:transportEquation}, obtaining $\partial_{\timeVariable} \fourierTransformed{\solution}(\timeVariable, \frequency)  = -\imaginaryUnit\advectionVelocity\frequency \fourierTransformed{\solution}(\timeVariable, \frequency)$ for $\frequency \in \reals$.
The explicit solution hence reads $\fourierTransformed{\solution}(\timeVariable, \frequency) = e^{-\imaginaryUnit\advectionVelocity \frequency \timeVariable} {\initialDatumFourier}(\frequency)$.

For the sake of approximating the solution of \eqref{eq:transportEquation} and \eqref{eq:initialDatum}, we consider a uniform time-space discretization with steps $\timeStep$ and $\spaceStep$, so that the discrete grid points in time will be $\gridPointTime{\indicesTime} \definitionEquality \indicesTime \timeStep$ with $\indicesTime \in \naturals$ and those in space $\gridPointSpace{\indicesSpace} \definitionEquality \indicesSpace \spaceStep$ with $\indicesSpace \in \relatives$.
For we consider explicit numerical methods for the hyperbolic equation \eqref{eq:transportEquation}, we naturally fix the ratio\footnote{We acknowledge that this notation is ``standard'' in the lattice Boltzmann community, while the Finite Difference community mostly employs the reciprocal $\latticeVelocity = \timeStep/\spaceStep$. We shall stick with the former notation.} $\latticeVelocity \definitionEquality \spaceStep/\timeStep$ to some positive real number as the spatial grid shrinks, \idEst{} $\spaceStep \to 0$, and so we shall be allowed to use $\spaceStep$ as unique discretization parameter.

\section{The numerical experiment behind this study}\label{sec:originProblem}

As extensively stressed in the introduction, this study has been stimulated by the following example of multi-step scheme with $\numberSteps + 1 = 3$ steps, which origin will be clarified in a moment.
Let $\indicesTime \geq 2$, $\indicesSpace \in \relatives$, and consider:
\begin{align}
    \solutionDiscrete{\indicesTime + 1}{\indicesSpace} =  &\tfrac{1}{3} (1 - 4\courantNumber^2 + 2(\courantNumber^2 - 1) (\centralSecondDifference + 2) - 6\courantNumber\centralFirstDifference) \solutionDiscrete{\indicesTime}{\indicesSpace} \nonumber \\
    - &\tfrac{1}{3} (1 - 4\courantNumber^2 + 2(\courantNumber^2 - 1) (\centralSecondDifference + 2) + 6\courantNumber\centralFirstDifference)  \solutionDiscrete{\indicesTime - 1}{\indicesSpace} + \solutionDiscrete{\indicesTime - 2}{\indicesSpace}, \label{eq:FourthOrderLeapFrog}
\end{align}
where the centered first-order finite difference $\centralFirstDifference$ is defined by $\centralFirstDifference \solutionDiscrete{ }{\indicesSpace} \definitionEquality (\solutionDiscrete{ }{\indicesSpace + 1} - \solutionDiscrete{ }{\indicesSpace - 1})/2$, and the centered second-order finite difference $\centralSecondDifference$ is given by $\centralSecondDifference \solutionDiscrete{ }{\indicesSpace} \definitionEquality \solutionDiscrete{ }{\indicesSpace + 1}-2\solutionDiscrete{ }{\indicesSpace} + \solutionDiscrete{ }{\indicesSpace-1}$.
For it will play an important role in what follows, we define the Courant number $\courantNumber\definitionEquality \advectionVelocity/\latticeVelocity$.
In \eqref{eq:FourthOrderLeapFrog}, we have to interpret $\solutionDiscrete{\indicesTime}{\indicesSpace} \approx \solution(\gridPointTime{\indicesTime}, \gridPointSpace{\indicesSpace})$.

\subsection{Construction of \eqref{eq:FourthOrderLeapFrog} from a lattice Boltzmann scheme}

The multi-step Finite Difference scheme \eqref{eq:FourthOrderLeapFrog} is constructed starting from a lattice Boltzmann scheme, as previously described in \cite{bellotti2022finite}.
Let us emphasize that---though the theory on Finite Difference schemes has been understood for decades now---recent works in the framework of lattice Boltzmann schemes \cite{fuvcik2021equivalent, bellotti2022finite} have demonstrated that these latter are indeed an inextinguishable source of multi-step Finite Difference schemes.
Therefore the (still imperfect) rigorous understanding of lattice Boltzmann schemes is likely to pass from that of multi-step Finite Difference schemes \cite{bellotti2023truncation}.

\subsubsection{Lattice Boltzmann algorithm: collide-and-stream }\label{sec:collideAndStreamAlgorithm}

Consider a scheme in one space dimension featuring three discrete velocities, see for example \cite{dubois2013stable}.
Without entering into the details on how lattice Boltzmann schemes are devised, we just consider that they stem from a collide-and-stream procedure made up as follows.
Here, $\indicesTime \in \naturals$.
\begin{itemize}
    \item The \emph{local collision phase} reads, for $\indicesSpace \in \relatives$:
    \begin{equation}\label{eq:collisionLatticeBoltzmann}
        \firstMomentDiscrete{\indicesTime\collided}{\indicesSpace} = \firstMomentDiscrete{\indicesTime}{\indicesSpace}, \qquad \secondMomentDiscrete{\indicesTime\collided}{\indicesSpace} = (1-\relaxationParameterSecondMoment) \secondMomentDiscrete{\indicesTime}{\indicesSpace} + \relaxationParameterSecondMoment \secondMomentLetter^{\atEquilibrium} ( \firstMomentDiscrete{\indicesTime}{\indicesSpace}), \qquad \thirdMomentDiscrete{\indicesTime\collided}{\indicesSpace} = (1-\relaxationParameterThirdMoment) \thirdMomentDiscrete{\indicesTime}{\indicesSpace} + \relaxationParameterThirdMoment \thirdMomentLetter^{\atEquilibrium} ( \firstMomentDiscrete{\indicesTime}{\indicesSpace}).
    \end{equation}
    In these expressions, $\relaxationParameterSecondMoment, \relaxationParameterThirdMoment \in (0, 2]$ are the relaxation parameters of the non-conserved moments $\secondMomentDiscrete{}{}$ and $\thirdMomentDiscrete{}{}$, whereas $\secondMomentLetter^{\atEquilibrium}$ and $\thirdMomentLetter^{\atEquilibrium}$ are their equilibria: possibly non-linear functions of the conserved moment $\firstMomentDiscrete{}{}$.
    For the problem we aim at solving is linear, we consider linear equilibria and thus take $\equilibriumCoefficientSecond, \equilibriumCoefficientThird \in \reals$ such that $\secondMomentLetter^{\atEquilibrium}(\firstMomentDiscrete{}{}) = \equilibriumCoefficientSecond \firstMomentDiscrete{}{}$ and $\thirdMomentLetter^{\atEquilibrium}(\firstMomentDiscrete{}{}) = \equilibriumCoefficientThird \firstMomentDiscrete{}{}$.
    \item The \emph{non-local stream phase} is written using another basis induced by $\momentMatrix^{-1}$.
    We take 
    \begin{equation*}
        \momentMatrix =
        \begin{bmatrix}
            \momentMatrixEntryLetter_{11} & 1 & 1 \\
            0 & 1 & -1 \\
            \momentMatrixEntryLetter_{31} & 1 & 1
        \end{bmatrix},
    \end{equation*} 
    where $\momentMatrixEntryLetter_{11}, \momentMatrixEntryLetter_{31} \in \reals$ remain free parameters that could be tuned to change some features of the scheme, in particular stability.
    To keep the matrix $\momentMatrix$ invertible, we have to enforce $\momentMatrixEntryLetter_{11} \neq \momentMatrixEntryLetter_{31}$.
    The usual choice \cite{dubois2013stable, fevrier2014extension, dubois2020notion} is to take $\momentMatrixEntryLetter_{11} = 1$, that is, if we interpret the first moment $\firstMomentDiscrete{}{}$ as a density, all the ``particles'' have the same mass. It is natural to take $\momentMatrixEntryLetter_{12} = \momentMatrixEntryLetter_{13}$ for symmetry reasons, and we take these two entries equal to one as a normalization.
    It is also natural to consider $\momentMatrixEntryLetter_{21} = 0$, to avoid linear transport terms which do not originate from the equilibria, being intrinsically linear.
    Usual values for $\momentMatrixEntryLetter_{31}$ are zero \cite{fevrier2014extension} (for simplicity) and -2 \cite{dubois2020notion} (to have orthogonal rows in $\momentMatrix$ with respect to the Euclidean scalar product of vectors). Again, $ \momentMatrixEntryLetter_{32} =  \momentMatrixEntryLetter_{33} = 1$ by symmetry and normalization arguments.
    The post-collisional distribution functions associated with the discrete velocities $0$, $1$, and $-1$ are recovered point-by-point by $\transpose{(\distributionFunctionDiscreteZero{\indicesTime\collided}{\indicesSpace}, \distributionFunctionDiscretePlus{\indicesTime\collided}{\indicesSpace}, \distributionFunctionDiscreteMinus{\indicesTime\collided}{\indicesSpace})} = \momentMatrix^{-1} \transpose{(\firstMomentDiscrete{\indicesTime\collided}{\indicesSpace}, \secondMomentDiscrete{\indicesTime\collided}{\indicesSpace}, \thirdMomentDiscrete{\indicesTime\collided}{\indicesSpace})}$, where $\indicesSpace \in \relatives$.
    The stream reads, for $\indicesSpace \in \relatives$
    \begin{equation}\label{eq:streamLatticeBoltzmann}
        \distributionFunctionDiscreteZero{\indicesTime + 1}{\indicesSpace} = \distributionFunctionDiscreteZero{\indicesTime\collided}{\indicesSpace}, \qquad \distributionFunctionDiscretePlus{\indicesTime + 1}{\indicesSpace} = \distributionFunctionDiscretePlus{\indicesTime\collided}{\indicesSpace - 1}, \qquad \distributionFunctionDiscreteMinus{\indicesTime + 1}{\indicesSpace} = \distributionFunctionDiscreteMinus{\indicesTime\collided}{\indicesSpace + 1}.
    \end{equation}
    After this phase, one can recover the moments by setting $\transpose{(\firstMomentDiscrete{\indicesTime+1}{\indicesSpace}, \secondMomentDiscrete{\indicesTime+1}{\indicesSpace}, \thirdMomentDiscrete{\indicesTime+1}{\indicesSpace})} = \momentMatrix \transpose{(\distributionFunctionDiscreteZero{\indicesTime + 1}{\indicesSpace}, \distributionFunctionDiscretePlus{\indicesTime + 1}{\indicesSpace}, \distributionFunctionDiscreteMinus{\indicesTime + 1}{\indicesSpace})}$.
\end{itemize}

The lattice Boltzmann scheme can be written on the moments using $\schemeMatrix$, a 3-by-3 matrix with entries on the ring of spatial Finite Difference operators on Cartesian grid, so that $\transpose{(\firstMomentDiscrete{\indicesTime+1}{\indicesSpace}, \secondMomentDiscrete{\indicesTime+1}{\indicesSpace}, \thirdMomentDiscrete{\indicesTime+1}{\indicesSpace})} = \schemeMatrix \transpose{(\firstMomentDiscrete{\indicesTime}{\indicesSpace}, \secondMomentDiscrete{\indicesTime}{\indicesSpace}, \thirdMomentDiscrete{\indicesTime}{\indicesSpace})} $.
Therefore $\transpose{(\firstMomentDiscrete{\indicesTime}{\indicesSpace}, \secondMomentDiscrete{\indicesTime}{\indicesSpace}, \thirdMomentDiscrete{\indicesTime}{\indicesSpace})} = \schemeMatrix^{\indicesTime} \transpose{(\firstMomentDiscrete{0}{\indicesSpace}, \secondMomentDiscrete{0}{\indicesSpace}, \thirdMomentDiscrete{0}{\indicesSpace})} $ and by the Parseval's identity, we naturally introduce the following definition of stability.
\begin{definition}[Stability of a lattice Boltzmann scheme]
    A lattice Boltzmann scheme, such as \eqref{eq:collisionLatticeBoltzmann}/\eqref{eq:streamLatticeBoltzmann}, is said to be ``stable'' if and only if $\fourierTransformed{\schemeMatrix}(\frequency\spaceStep)^{\indicesTime}$ is bounded for every $|\frequency\spaceStep|\leq \pi$ and for every $\indicesTime \in \naturals$.
\end{definition}
\begin{proposition}[Stability of a lattice Boltzmann scheme]
    A lattice Boltzmann scheme, such as \eqref{eq:collisionLatticeBoltzmann}/\eqref{eq:streamLatticeBoltzmann}, is stable if and only if,  for every $|\frequency\spaceStep|\leq \pi$, the minimal polynomial of $\fourierTransformed{\schemeMatrix}(\frequency\spaceStep)$ is a simple \emph{von Neumann} polynomial, namely none of its roots is outside the closed unit disk and those on the unit circle are simple.
\end{proposition}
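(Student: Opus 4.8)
The plan is to reduce the statement to the classical characterization of power-bounded matrices applied at each fixed frequency, and then to promote that pointwise fact to the whole frequency band by a compactness argument. Throughout, write $\theta = \frequency\spaceStep \in [-\pi,\pi]$ and $A(\theta) = \fourierTransformed{\schemeMatrix}(\theta)$; this is a $3\times 3$ matrix whose entries are trigonometric polynomials in $\theta$ (because there are finitely many discrete velocities), hence real-analytic, and the parameter set $[-\pi,\pi]$ is compact. These two features — finite size and analytic dependence on a compact parameter — are what the whole argument rests on.

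First, the pointwise step. Fix $\theta$ and let $\minPoly$ denote the minimal polynomial of $A(\theta)$. For any vector $v$, the sequence $(A(\theta)^{\indicesTime} v)_{\indicesTime}$ obeys the linear recurrence whose characteristic polynomial is $\minPoly$, since $\minPoly$ annihilates $A(\theta)$; moreover $\sup_{\indicesTime}\|A(\theta)^{\indicesTime}\|<\infty$ if and only if all such orbits are bounded. I would then invoke (or quickly reprove via the Jordan canonical form) the classical fact that $\sup_{\indicesTime}\|A(\theta)^{\indicesTime}\|<\infty$ if and only if every eigenvalue of $A(\theta)$ lies in the closed unit disk and every eigenvalue of modulus one is non-defective. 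Since the multiplicity of a root $\mu$ of $\minPoly$ equals the size of the largest Jordan block of $A(\theta)$ at $\mu$, this is exactly the statement that $\minPoly$ is a simple von Neumann polynomial. For the "only if" direction one exhibits an unbounded orbit: an eigenvector attached to a root outside the closed disk, or a length-two Jordan chain attached to a defective root on the unit circle, gives $\|A(\theta)^{\indicesTime} v\|\to\infty$.

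Second, the assembly. The forward implication is then immediate: stability of the lattice Boltzmann scheme gives $\sup_{\indicesTime}\|A(\theta)^{\indicesTime}\|<\infty$ at each $\theta$, hence $\minPoly$ is a simple von Neumann polynomial for every $\theta$ by the pointwise step. For the converse one must produce a bound on $\|A(\theta)^{\indicesTime}\|$ uniform in both $\theta$ and $\indicesTime$. Here I would use the Kreiss matrix theorem: for a $d\times d$ matrix the power bound is controlled, up to a factor depending only on $d=3$, by the Kreiss resolvent constant $\mathcal{K}(A(\theta))=\sup_{|z|>1}(|z|-1)\,\|(zI-A(\theta))^{-1}\|$, which is finite at every $\theta$ by the pointwise step. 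It then remains to check that $\theta\mapsto\mathcal{K}(A(\theta))$ is bounded on the compact set $[-\pi,\pi]$: for $|z|\geq 2\sup_{\theta}\|A(\theta)\|$ this is a trivial Neumann-series estimate, away from the "critical" frequencies where an eigenvalue reaches the unit circle the resolvent is jointly continuous in $(\theta,z)$ on the relevant region and the estimate is routine, while near a critical frequency one uses the elementary bound $|z|-1\leq|z-\mu|$ valid for $|\mu|=1$, together with the local (Puiseux) expansions of the eigenvalues and spectral projectors of the analytic family $A(\theta)$, to control the near-singular part of the resolvent.

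I expect the genuine difficulty to be precisely this uniformity over the frequency band — ruling out that the power-bound constant degenerates as $\theta$ approaches a frequency at which roots of $\minPoly$ touch the unit circle, in particular when several such roots collide there. The single-matrix characterization and the forward implication are entirely classical. As a byproduct, the same compactness argument shows that the two natural readings of the stability definition coincide in this setting — a single constant valid for all $\theta$ and all $\indicesTime$, versus mere finiteness of $\sup_{\indicesTime}\|A(\theta)^{\indicesTime}\|$ for each individual $\theta$ — which is what makes the clean statement of the proposition legitimate.
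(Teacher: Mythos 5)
Your first paragraph is, in substance, the paper's entire proof: the stability definition here is frequency-wise (boundedness of $\fourierTransformed{\schemeMatrix}(\frequency\spaceStep)^{\indicesTime}$ for each fixed $|\frequency\spaceStep|\leq\pi$), so the proposition is a statement about each matrix separately, and the Jordan-form characterization of power-bounded matrices---roots of the minimal polynomial in the closed unit disk, those on the circle carrying only $1\times 1$ blocks, with an unbounded Jordan-chain orbit furnishing the converse---settles both implications. Up to that point you are doing exactly what the paper does (the paper only writes out the ``if'' direction explicitly).

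The problem is the second half. You assert that the converse \emph{must} produce a bound uniform in $\theta$ and $\indicesTime$, and that such a bound follows from finiteness of the Kreiss constant at each $\theta$ together with compactness, analyticity and Puiseux expansions near critical frequencies, adding as a byproduct that the pointwise and uniform readings of stability coincide. That implication is false at the level of generality you invoke: pointwise power-boundedness of a real-analytic family on a compact interval does not imply a uniform power bound. For instance
\begin{equation*}
A(\theta)=\begin{pmatrix} e^{\imaginaryUnit\theta^{2}/4} & \theta\\ 0 & 1\end{pmatrix},\qquad \theta\in[-\pi,\pi],
\end{equation*}
is analytic, equals the identity at $\theta=0$ and has two distinct unimodular eigenvalues for $\theta\neq 0$, hence is power bounded at every single $\theta$; yet choosing $\indicesTime$ with $\indicesTime\theta^{2}/4$ near $\pi$ modulo $2\pi$ gives $\sup_{\indicesTime}\|A(\theta)^{\indicesTime}\|\gtrsim 1/|\theta|$, so the Kreiss constant is unbounded on the compact interval. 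The eigenvalue gap may close to higher order than the coupling, and neither compactness nor Puiseux expansions rules this out---so the ``routine'' resolvent estimate near a frequency where unimodular eigenvalues coalesce is precisely the step that can fail, and repairing it would require structural information about $\fourierTransformed{\schemeMatrix}$ that your sketch never uses. Fortunately none of this is needed: under the paper's definition no uniform-in-$\theta$ bound is asserted, and your pointwise argument alone already proves the proposition.
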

\begin{proof}
    This is a consequence of the Jordan canonical form for complex matrices. If the minimal polynomial of $\fourierTransformed{\schemeMatrix}(\frequency\spaceStep)$ is a simple \emph{von Neumann} polynomial, then the maximal size of the Jordan blocks associated to each eigenvalue on the unit circle is one, which prevents polynomial growths of $\fourierTransformed{\schemeMatrix}(\frequency\spaceStep)^{\indicesTime}$ in $\indicesTime$. Exponential growths are not possible since all the roots are in the closed unit disk.
\end{proof}

\subsubsection{Tuning of the free parameters}\label{sec:tuningFreeParam}

Looking at the algorithm proposed in \Cref{sec:collideAndStreamAlgorithm}, we see that it features a large number of free parameters to be tuned, namely $\relaxationParameterSecondMoment$, $\relaxationParameterThirdMoment$, $\equilibriumCoefficientSecond$, $\equilibriumCoefficientThird$, $\momentMatrixEntryLetter_{11}$, and $\momentMatrixEntryLetter_{31}$.
We now select them according to the order of accuracy that we want to achieve with respect to the target equation \eqref{eq:transportEquation}.
This could also be obtained by turning the lattice Boltzmann scheme---originally on $\firstMomentDiscrete{}{}$, $\secondMomentDiscrete{}{}$, and $\thirdMomentDiscrete{}{}$---into a multi-step Finite Difference scheme solely on $\firstMomentDiscrete{}{}$, see \cite{bellotti2022finite}, and then computing the modified equations \cite{warming1974modified} or expanding the roots of the amplification polynomial in the small wave-number limit. 
Instead, we compute the modified equations on the original lattice Boltzmann scheme following the procedure proposed in \cite{dubois2022nonlinear}, where they are called ``equivalent equations''.
We assume that all the parameters remain fixed as $\spaceStep$---and \emph{a fortiori} $\timeStep$---go to zero. 
The obtained modified equation reads $\partial_{\timeVariable} \testFunction + \duboisOrderTerm{1} (\testFunction) + \sum_{\indicesOrder = 2}^{\indicesOrder = +\infty} \timeStep^{\indicesOrder - 1} \duboisOrderTerm{\indicesOrder} (\testFunction) = 0$, see \cite[Equation (38)]{dubois2009towards}, where a generic function $\testFunction = \testFunction (\timeVariable, \spaceVariable)$ appears to stress the fact that this is not the solution $\solution$ of the target problem \eqref{eq:transportEquation} and \eqref{eq:initialDatum}.
The determination of $\duboisOrderTerm{1} (\testFunction)$, enforcing that $\duboisOrderTerm{1} (\testFunction) = \advectionVelocity \partial_{\spaceVariable} \testFunction$ secures first-order consistency with \eqref{eq:transportEquation}. 
Obtaining $\duboisOrderTerm{2} (\testFunction) = \dots = \duboisOrderTerm{\maxOrderAccuracy} (\testFunction) = 0$ ensures accuracy up to order $\maxOrderAccuracy$.
We proceed iteratively order-by-order and progressively incorporate any previous choice on the parameters.
\begin{itemize}
    \item We obtain $\duboisOrderTerm{1} (\testFunction) = \latticeVelocity \equilibriumCoefficientSecond \partial_{\spaceVariable}\testFunction$. To achieve consistency, we have to enforce $\equilibriumCoefficientSecond = \courantNumber$.
    \item We have 
    \begin{equation*}
        \duboisOrderTerm{2} (\testFunction) = \latticeVelocity^2 \Bigl ( \frac{1}{\relaxationParameterSecondMoment} - \frac{1}{2} \Bigr ) \Bigl (- \frac{\momentMatrixEntryLetter_{31}}{\momentMatrixEntryLetter_{31} - \momentMatrixEntryLetter_{11}} + \courantNumber^2 + \frac{\momentMatrixEntryLetter_{11}}{\momentMatrixEntryLetter_{31} - \momentMatrixEntryLetter_{11}}\equilibriumCoefficientThird  \Bigr ) \partial_{\spaceVariable\spaceVariable} \testFunction.
    \end{equation*}
    There are two ways of having $\duboisOrderTerm{2} (\testFunction) = 0$ by making each term into parentheses vanish. We adopt $\relaxationParameterSecondMoment = 2$.
    \item We obtain 
    \begin{equation*}
        \duboisOrderTerm{3} (\testFunction) = \frac{\latticeVelocity^3 \courantNumber}{12} \Bigl ( -2\courantNumber^2 + \frac{(1-3\equilibriumCoefficientThird )\momentMatrixEntryLetter_{11} + \momentMatrixEntryLetter_{31} }{\momentMatrixEntryLetter_{31} - \momentMatrixEntryLetter_{11}}\Bigr ) \partial_{\spaceVariable}^3 \testFunction.
    \end{equation*}
    We achieve $\duboisOrderTerm{3} (\testFunction) = 0$ through $\equilibriumCoefficientThird = \tfrac{1}{3} \Bigl ( 1 + \frac{2\momentMatrixEntryLetter_{31}}{\momentMatrixEntryLetter_{11}}- 2 \frac{\momentMatrixEntryLetter_{31}-\momentMatrixEntryLetter_{11}}{\momentMatrixEntryLetter_{11}}\courantNumber^2 \Bigr )$.
    \item We have 
    \begin{equation*}
        \duboisOrderTerm{4} =\frac{\latticeVelocity^4 \courantNumber^2 (\courantNumber^2 - 1)}{6} \Bigl ( \frac{1}{\relaxationParameterThirdMoment} - \frac{1}{2}\Bigr )\partial_{\spaceVariable}^4 \testFunction,
    \end{equation*}
    hence achieve fourth-order accuracy by selecting $\relaxationParameterThirdMoment = 2$.
\end{itemize}

After this procedure, the coefficients $\momentMatrixEntryLetter_{11}$ and $\momentMatrixEntryLetter_{31}$ are still free.
Nevertheless, we are about to see that they do not play any major role in the rest of the paper and we can therefore fix them at our convenience.
We finish on the stability of the lattice Boltzmann scheme.
\begin{proposition}[Stability of the lattice Boltzmann scheme \eqref{eq:collisionLatticeBoltzmann}/\eqref{eq:streamLatticeBoltzmann}]
    The lattice Boltzmann scheme \eqref{eq:collisionLatticeBoltzmann}/\eqref{eq:streamLatticeBoltzmann} with the previously selected parameters is stable under the CFL condition $|\courantNumber| < 1/2$.
\end{proposition}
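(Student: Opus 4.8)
The plan is to invoke the criterion of the preceding \textit{Proposition} (stability of a lattice Boltzmann scheme): it suffices to prove that, for every $|\frequency\spaceStep|\le\pi$, the minimal polynomial of $\fourierTransformed{\schemeMatrix}(\frequency\spaceStep)$ is a simple \emph{von Neumann} polynomial. Write $\theta:=\frequency\spaceStep\in[-\pi,\pi]$. Since the Finite Difference scheme obtained by eliminating $\secondMomentDiscrete{}{}$ and $\thirdMomentDiscrete{}{}$ is exactly \eqref{eq:FourthOrderLeapFrog} — and in particular the still-free entries $\momentMatrixEntryLetter_{11},\momentMatrixEntryLetter_{31}$ disappear from it — the characteristic polynomial of $\fourierTransformed{\schemeMatrix}(\theta)$ coincides with the amplification polynomial of \eqref{eq:FourthOrderLeapFrog}, obtained via the Fourier symbols $\centralFirstDifference\mapsto\imaginaryUnit\sin\theta$, $\centralSecondDifference\mapsto 2\cos\theta-2$:
\begin{equation*}
    P(z;\theta)=z^3-\hat{a}(\theta)\,z^2+\overline{\hat{a}(\theta)}\,z-1,\qquad \hat{a}(\theta)=\tfrac13 A(\theta)-2\imaginaryUnit\,\courantNumber\sin\theta,\quad A(\theta):=1-4\courantNumber^2+4(\courantNumber^2-1)\cos\theta.
\end{equation*}
Two structural facts drive everything. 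First, $P$ is \emph{self-inversive}: $z^3\,\overline{P(1/\bar z;\theta)}=-P(z;\theta)$, so its roots are globally invariant under $z\mapsto 1/\bar z$. Second, $P(z;0)=z^3+z^2-z-1=(z-1)(z+1)^2$, independently of $\courantNumber$.

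From self-inversiveness one reads off the classical leap-frog alternative: for each $\theta$, either the three roots of $P(\cdot\,;\theta)$ all lie on $\{|z|=1\}$, or one lies strictly inside the unit disk and its mirror strictly outside — the second case precluding boundedness of $\fourierTransformed{\schemeMatrix}(\theta)^{\indicesTime}$ in $\indicesTime$. It therefore remains to show, for $|\courantNumber|<1/2$: \textbf{(i)} for every $\theta\in[-\pi,\pi]$ the three roots of $P(\cdot\,;\theta)$ lie on the unit circle; \textbf{(ii)} these roots are simple for $\theta\ne 0$, while at $\theta=0$ — where $z=-1$ is a double root — the matrix $\fourierTransformed{\schemeMatrix}(0)$ is diagonalizable. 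Point \textbf{(ii)} at $\theta=0$ is quick: at zero frequency the stream is the identity, so $\fourierTransformed{\schemeMatrix}(0)$ reduces to the moment-space collision matrix, which with $\relaxationParameterSecondMoment=\relaxationParameterThirdMoment=2$ is the lower-triangular matrix with rows $(1,0,0)$, $(2\courantNumber,-1,0)$, $(2\equilibriumCoefficientThird,0,-1)$; since this matrix plus the identity has rank one, the eigenvalue $-1$ is semisimple and the minimal polynomial is $(z-1)(z+1)$, a simple \emph{von Neumann} polynomial. Combined with \textbf{(i)}, \textbf{(ii)}, and the criterion above, this yields stability for $|\courantNumber|<1/2$.

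For \textbf{(i)} I would set $z=e^{\imaginaryUnit\phi}$, put $\psi:=\phi/2\in(-\pi/2,\pi/2]$, and reduce $P(e^{\imaginaryUnit\phi};\theta)=0$ to the real equation $3\sin 3\psi=A(\theta)\sin\psi-6\,\courantNumber\sin\theta\,\cos\psi$; denote by $g_\theta(\psi)$ the difference of the two sides. Since $|\courantNumber|<1/2$, $A$ is strictly increasing on $[0,\pi]$ with $A(0)=-3$ and $A(\pi)=5-8\courantNumber^2\in(3,5]$, so $A(\theta)\in(-3,5)$ for $\theta\in(0,\pi)$; evaluating $g_\theta(-\pi/2)=3+A(\theta)>0$, $g_\theta(0)=6\courantNumber\sin\theta$, $g_\theta(\pi/2)=-(3+A(\theta))<0$, and $g_\theta$ at one interior value near the physical-root angle, one pins down a fixed alternation pattern giving three sign changes on $(-\pi/2,\pi/2]$, hence three roots of the cubic $P(\cdot\,;\theta)$ on the unit circle — all of them, by degree. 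A mechanical variant, which I would actually carry through, substitutes $y=\sin^2(\phi/2)\in[0,1]$ and squares: the resulting cubic $h(y):=16y^3-8p\,y^2+(p^2+q^2)y-q^2$ with $p:=(9-A(\theta))/3>0$, $q:=2\courantNumber\sin\theta$, satisfies $h(0)=-q^2\le 0$ and $h(1)=(p-4)^2\ge 0$ (both vanishing only at $\theta=0$), and one shows that $|\courantNumber|<1/2$ is exactly what forces all three roots of $h$ into $[0,1]$ for every $\theta$, after which the sign of $\sin(\phi/2)$ is recovered from the unsquared equation to produce the three circle roots. For \textbf{(ii)} one checks non-vanishing of the discriminant (in $z$) of $P(\cdot\,;\theta)$ for $\theta\in(0,\pi]$: a local expansion at $(z,\theta)=(-1,0)$ shows the double root splits at first order into $z=-1+\tfrac{\imaginaryUnit}{2}\bigl(-\courantNumber\pm\sqrt{\tfrac83-\tfrac53\courantNumber^2}\bigr)\theta+O(\theta^2)$ (two distinct branches, tangent to the circle), and the exact factorization at the threshold, $P(z;\theta)\big|_{\courantNumber=1/2}=(z+e^{\imaginaryUnit\theta})(z^2-e^{-\imaginaryUnit\theta})$, makes the loss of simplicity transparent: its roots $-e^{\imaginaryUnit\theta}$ and $\pm e^{-\imaginaryUnit\theta/2}$ collide at $\theta=\pm 2\pi/3$.

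The main obstacle is the uniform-in-$\theta$ bookkeeping in \textbf{(i)} — equivalently, locating \emph{all} roots of the auxiliary cubic $h$ in $[0,1]$ and resolving the sign ambiguity created by squaring — together with the accompanying discriminant non-vanishing in \textbf{(ii)}: in short, proving that the three roots stay on and distinct on the unit circle for the \emph{entire} frequency range, with the borderline being precisely $|\courantNumber|=1/2$ (where, as the factorization shows, a double root on the circle appears at $\theta=\pm 2\pi/3$, and one checks it sits at a non-diagonalizable point, so that the CFL bound is sharp). By contrast the $\theta=0$ analysis is short but cannot be dropped: it is exactly the frequency at which the scheme is ``weakly'' unstable, and the semisimplicity of the eigenvalue $-1$ of $\fourierTransformed{\schemeMatrix}(0)$ is what rescues genuine $L^2$-stability. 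Finally, one may assume $\courantNumber\ge 0$ throughout, since $\courantNumber\mapsto-\courantNumber$ merely replaces $P$ by its complex conjugate, and the degenerate case $\courantNumber=0$ (where $P(z;\theta)=(z-1)\bigl(z^2+(\tfrac43\cos\theta+\tfrac23)z+1\bigr)$) is immediate.
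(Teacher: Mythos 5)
Your overall architecture is the right one and matches the paper's: reduce to the criterion that the minimal polynomial of $\fourierTransformed{\schemeMatrix}(\frequency\spaceStep)$ be a simple \emph{von Neumann} polynomial, treat $\frequency\spaceStep=0$ separately, and handle $\frequency\spaceStep\neq 0$ through the characteristic polynomial, which is exactly the amplification polynomial \eqref{eq:amplificationPolynomialFourthOrderLeapFrog}. Your zero-frequency argument is complete and correct, and is a nice variant of the paper's: you identify $\fourierTransformed{\schemeMatrix}(0)$ with the collision matrix (stream symbol is the identity at zero wave-number) and get semisimplicity of the eigenvalue $-1$ from a rank-one computation, whereas the paper observes directly that $\fourierTransformed{\schemeMatrix}(0)^2=\identityMatrix$ so that the minimal polynomial is $\timeShiftOperator^2-1$; both give the same conclusion. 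Your checks at the threshold are also sound (the factorization $(\timeShiftOperator+e^{\imaginaryUnit\frequency\spaceStep})(\timeShiftOperator^2-e^{-\imaginaryUnit\frequency\spaceStep})$ at $|\courantNumber|=1/2$ is exact, and your first-order splitting of the double root at $\frequency\spaceStep=0$ agrees with \eqref{eq:expansionSecondEigenvalueD1Q3}--\eqref{eq:expansionThirdEigenvalueD1Q3}).

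The genuine gap is in the nonzero-frequency part, which is precisely where the CFL condition $|\courantNumber|<1/2$ must do its work. In the paper this is the content of \Cref{prop:StabilityThirdLeapFrog}, whose proof occupies \Cref{app:ProofStabilityLeapFrog}: a Miller/Schur-type iteration reducing simplicity-on-the-circle to explicit polynomial inequalities in $\courantNumber$ and $\cos(\frequency\spaceStep)$, whose resolution (partly by symbolic computation) is what produces the sharp bound $1/2$. In your proposal the corresponding steps are announced but not carried out: the claim that $|\courantNumber|<1/2$ forces all three roots of the auxiliary cubic $h(y)=16y^3-8py^2+(p^2+q^2)y-q^2$ into $[0,1]$ for every $\frequency\spaceStep$ is asserted, not proved, and the sign-change bookkeeping you do exhibit does not yet suffice: with the evaluations at $\psi=-\pi/2,\,0,\,\pi/2$ the sign pattern is $(+,\,\mathrm{sign}(\courantNumber\sin\frequency\spaceStep),\,-)$, which guarantees only one (or two) crossings, so the promised ``fixed alternation pattern giving three sign changes'' requires additional, quantitatively chosen interior evaluation points together with inequalities in $\courantNumber$ and $\frequency\spaceStep$ --- exactly the computation you defer. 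The same applies to simplicity for $\frequency\spaceStep\in(0,\pi]$: ``one checks non-vanishing of the discriminant'' is the statement to be proved, not an argument; your local expansion at $\frequency\spaceStep=0$ and the exact threshold factorization are consistency checks, not a uniform-in-$\frequency\spaceStep$ proof. (A minor slip besides: $h(0)=-q^2$ also vanishes at $\frequency\spaceStep=\pi$, not only at $0$; harmless, but the parenthetical is wrong as stated.) If these computations were completed, your trigonometric/self-inversive route would be a legitimately different proof of the key root-location statement than the paper's Miller-criterion approach; as written, the quantitative core linking $|\courantNumber|<1/2$ to the root configuration for all nonzero frequencies is missing.
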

\begin{proof}
    For $\frequency\spaceStep \neq 0$, \Cref{prop:StabilityThirdLeapFrog} to come ensures that the characteristic polynomial of $\fourierTransformed{\schemeMatrix}(\frequency\spaceStep)$ is a simple \emph{Von Neumann} polynomial, thus this is also true for the minimal polynomial.
    For $\frequency\spaceStep = 0$, we have
    \begin{equation*}
        \fourierTransformed{\schemeMatrix}(0) = 
        \begin{bmatrix}
            1 & 0 & 0\\
            \star & - 1 & 0 \\
            \star & 0 & -1
        \end{bmatrix}, \qquad \textnormal{hence} \qquad 
        \fourierTransformed{\schemeMatrix}(0)^{\indicesTime} = 
        \begin{bmatrix}
            1 & 0 & 0\\
            \star (1-(-1)^{\indicesTime}) & (-1)^{\indicesTime} & 0 \\
            \star (1-(-1)^{\indicesTime}) & 0 & (-1)^{\indicesTime}
        \end{bmatrix},
    \end{equation*}
    where the $\star$ entries are terms depending on $\courantNumber$ and the choice of $\momentMatrixEntryLetter_{11}$ and $\momentMatrixEntryLetter_{31}$. They are independent of $\indicesTime$.
    Thus $\fourierTransformed{\schemeMatrix}(0)$ is power bounded and thus the lattice Boltzmann scheme stable.
    Observe that $\fourierTransformed{\schemeMatrix}(0)^{2} = \identityMatrix$, thus the polynomial $\timeShiftOperator^2 - 1$ is the minimal polynomial of $\fourierTransformed{\schemeMatrix}(0)$. 
    It has two roots on the unit circle which are distinct.
\end{proof}

\subsubsection{Corresponding Finite Difference scheme}

Having a fourth-order lattice Boltzmann scheme at our disposal, we now describe how to forget about it and obtain \eqref{eq:FourthOrderLeapFrog}.
In our previous contributions \cite{bellotti2022finite, bellotti2023truncation, bellotti2023initialisation}, we have shown how to recast any lattice Boltzmann scheme---whether it tackles linear or non-linear equation---under the form of a multi-step Finite Difference scheme on the conserved moments.
In the present context, the corresponding Finite Difference scheme will be on $\firstMomentDiscrete{}{}$ only.
The amplification polynomial of the corresponding Finite Difference scheme reads
\begin{equation}\label{eq:amplificationPolynomialFourthOrderLeapFrog}
    \amplificationPolynomial{\frequency\spaceStep}{\timeShiftOperator} = \determinant{\timeShiftOperator \identityMatrix - \fourierTransformed{\schemeMatrix}(\frequency\spaceStep)} = \timeShiftOperator^3 + \commonOperator(\frequency\spaceStep) \timeShiftOperator^2 - \conjugate{\commonOperator} (\frequency\spaceStep)\timeShiftOperator - 1, 
\end{equation}
where the over-line denotes complex conjugation, $\commonOperator(\frequency\spaceStep) = -\tfrac{1}{3} (1 - 4 \courantNumber^2 + 4(\courantNumber^2 - 1) \cos(\frequency\spaceStep) - 6\imaginaryUnit \courantNumber \sin(\frequency\spaceStep))$, and $|\frequency\spaceStep| \leq \pi$.
This is readily the amplification polynomial associated with \eqref{eq:FourthOrderLeapFrog}.
Observe that this scheme does not depend on the specific choice of $\momentMatrixEntryLetter_{11}$ and $\momentMatrixEntryLetter_{31}$, as previously claimed.

\subsubsection{Initialization}

We fix $\momentMatrixEntryLetter_{11} = 1$ and $\momentMatrixEntryLetter_{31} = -2$ for simplicity.
Since the bulk lattice Boltzmann scheme is fourth-order accurate, it needs to be initialized with at least third-order accurate schemes dictated by the choice of $\secondMomentDiscrete{0}{}$ and $\thirdMomentDiscrete{0}{}$.
The classical choice of taking them locally at equilibrium, namely selecting $\secondMomentDiscrete{0}{\indicesSpace} = \equilibriumCoefficientSecond \firstMomentDiscrete{0}{\indicesSpace} = \courantNumber \firstMomentDiscrete{0}{\indicesSpace}$ and $\thirdMomentDiscrete{0}{\indicesSpace}= \equilibriumCoefficientThird \firstMomentDiscrete{0}{\indicesSpace} = (2\courantNumber^2 - 1)\firstMomentDiscrete{0}{\indicesSpace}$ is not enough, because it yields first-order accurate solutions, see \cite{bellotti2023initialisation}.
Considering 
\begin{equation}\label{eq:FourthOrderInitializationLBM}
    \secondMomentDiscrete{0}{\indicesSpace} = \courantNumber \firstMomentDiscrete{0}{\indicesSpace} + \frac{\courantNumber^2 - 1}{6} \centralFirstDifference \firstMomentDiscrete{0}{\indicesSpace} , \qquad \thirdMomentDiscrete{0}{\indicesSpace} = (2\courantNumber^2 - 1)\firstMomentDiscrete{0}{\indicesSpace} + \courantNumber(\courantNumber^2 - 1)\centralFirstDifference \firstMomentDiscrete{0}{\indicesSpace} + \delta \centralSecondDifference \firstMomentDiscrete{0}{\indicesSpace}
\end{equation}
gives a third-order initialization scheme for the first stage and a third-order, for $\delta \neq 0$, or fourth-order, for $\delta = 0$, initialization scheme for the second stage by slightly perturbing the local equilibrium.
The amplification factors of the corresponding Finite Difference schemes will be given by $\amplificationFactorFourier{1}(\frequency\spaceStep) = \transpose{\canonicalBasisVector{1}} \fourierTransformed{\schemeMatrix}(\frequency\spaceStep) \transpose{(1, \courantNumber + \tfrac{\imaginaryUnit(\courantNumber^2-1)}{6}\sin(\frequency\spaceStep), (2\courantNumber^2 - 1)+{\imaginaryUnit \courantNumber(\courantNumber^2-1)}\sin(\frequency\spaceStep) + 2\delta (\cos(\frequency\spaceStep)-1))}$ and $\amplificationFactorFourier{2}(\frequency\spaceStep) = \transpose{\canonicalBasisVector{1}} \fourierTransformed{\schemeMatrix}(\frequency\spaceStep)^2 \transpose{(1, \courantNumber + \tfrac{\imaginaryUnit(\courantNumber^2-1)}{6}\sin(\frequency\spaceStep), (2\courantNumber^2 - 1)+{\imaginaryUnit \courantNumber(\courantNumber^2-1)}\sin(\frequency\spaceStep) + 2\delta (\cos(\frequency\spaceStep)-1))}$ and feature quite involved expressions that we do not provide here.
Still, we have the expansions $\amplificationFactorFourier{1}(\frequency\spaceStep) = e^{-\imaginaryUnit\courantNumber\frequency\spaceStep (1+\bigO{|\frequency\spaceStep|^3})}$ and $\amplificationFactorFourier{2}(\frequency\spaceStep) = e^{-2\imaginaryUnit\courantNumber\frequency\spaceStep (1+(\delta - 1)\bigO{|\frequency\spaceStep|^3} + \bigO{|\frequency\spaceStep|^4})}$ in the limit $|\frequency\spaceStep| \ll 1$.

\subsubsection{A surprising numerical experiment}\label{sec:firstSurprisingNumericalExperiment}

\begin{figure}[h]
    \begin{center}
        \includegraphics[width = 0.49\textwidth]{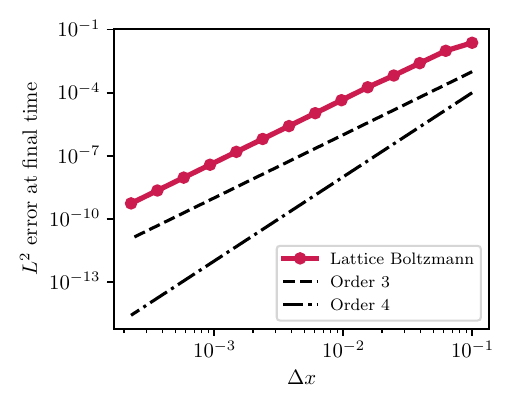}
        \includegraphics[width = 0.49\textwidth]{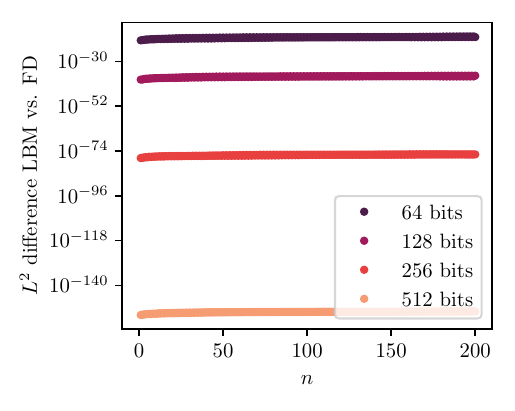}
    \end{center}\caption{\label{fig:LBM_vs_FD_arbitrary_precision}Left: error for the lattice Boltzmann scheme with initialization \eqref{eq:FourthOrderInitializationLBM} with $\delta = 1$ at final time $\finalTime = 0.2$. Right: difference between lattice Boltzmann and Finite Difference scheme  with $\numberSpacePoint = 200$ mesh points as time goes on for different floating point arithmetics.}
\end{figure}

We now test the order of convergence of the lattice Boltzmann scheme with respect to $\spaceStep$.
We simulate on a bounded domain $[-1, 1]$, enforcing periodic boundary conditions.
We employ the original lattice Boltzmann scheme \eqref{eq:collisionLatticeBoltzmann}/\eqref{eq:streamLatticeBoltzmann} with \eqref{eq:FourthOrderInitializationLBM} using $\delta = 1$, hence having third-order initializations. 
The initial datum is $\initialDatum(\spaceVariable) = \textnormal{exp}(-1/(1-(2 \spaceVariable)^2)) \indicatorFunction{(-1, 1)}(2\spaceVariable)$, which is a smooth function of class $C_{\textnormal{c}}^{\infty}([-1, 1])$ fulfilling the periodic boundary conditions.
We simulate using $\courantNumber = 1/4$.
Surprisingly, the result on the left of \Cref{fig:LBM_vs_FD_arbitrary_precision} shows third-order convergence instead of the expected fourth-order.
We will come back to this fact in a few moments.

\subsubsection{Equivalence of the lattice Boltzmann scheme and its corresponding Finite Difference scheme}\label{sec:equivalence}

Upon taking the initialization procedures into account, the unknowns $\firstMomentDiscrete{\indicesTime}{}$ computed using the original lattice Boltzmann method \eqref{eq:collisionLatticeBoltzmann}/\eqref{eq:streamLatticeBoltzmann} or its corresponding Finite Difference scheme \eqref{eq:FourthOrderLeapFrog} are mathematically the same.
Of course, since the operations implemented on computers can be different, this is actually true up to machine precision. 
To demonstrate this fact, we follow the illustration by \cite{dellar2023magic} and adopt the same setting of \Cref{sec:firstSurprisingNumericalExperiment} with a grid made up of $\numberSpacePoint = 200$ points, using both the original lattice Boltzmann scheme and its corresponding Finite Difference scheme with different machine precisions.
The results on the right of  \Cref{fig:LBM_vs_FD_arbitrary_precision} confirm our claim: the difference is of the order of the machine epsilon and accumulates in time.


\subsection{An (even more) surprising numerical experiment}\label{sec:NumericalExperiment}

\begin{figure}[h]
    \begin{center}
        \includegraphics{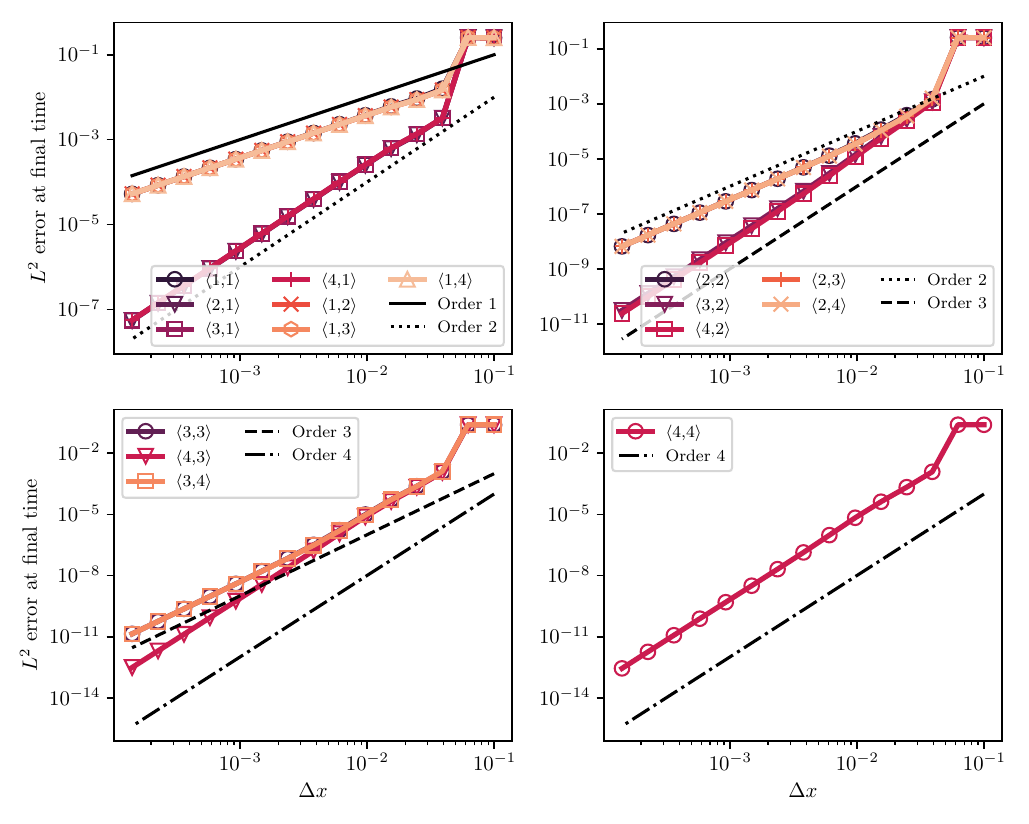}
    \end{center}\caption{\label{fig:leapFrogLBMConvergence}Error for \eqref{eq:FourthOrderLeapFrog} at final time $\finalTime = 0.2$ with different initialization schemes.}
\end{figure}

We now test the order of convergence of the corresponding Finite Difference scheme \eqref{eq:FourthOrderLeapFrog} with respect to $\spaceStep$.
The setting is the same as \Cref{sec:firstSurprisingNumericalExperiment}.
We vary a large number of initialization schemes. 
In particular, we shall use the Lax-Friedrichs scheme as prototype of first-order initialization scheme, the Lax-Wendroff scheme for second-order, the OS3 scheme for third-order, and the OS4 scheme for fourth order, see \cite{daru2004high}.
Test cases will be numbered as follows: imagine to deal with a bulk scheme featuring $\numberSteps + 1= 3$ steps, hence needing $\numberSteps= 2$ initialization schemes. The test case $\testCaseId{\maximumOrder_2, \maximumOrder_1}$ corresponds to a $\maximumOrder_2$-order scheme (applied twice) for the second initialization and a $\maximumOrder_1$-order scheme for the first initialization.
In the results given in \Cref{fig:leapFrogLBMConvergence} for the $L^2$ error at $\finalTime$, we observe an unexpected result:
\begin{equation*}
    \textnormal{overall order} = \min (\maximumOrder, \maximumOrder_2, \maximumOrder_1 + 1), \quad \textnormal{\emph{in lieu} of the expected} \quad
    \textnormal{overall order} = \min (\maximumOrder, \maximumOrder_2 + 1, \maximumOrder_1 + 1)
\end{equation*}
from \cite[Theorem 10.6.2]{strikwerda2004finite}, where the order of the bulk scheme is $\maximumOrder$ (in our case $\maximumOrder = 4$).
This means that we have to feed the second time step $\indicesTime = 2$ with an initialization of the same order of accuracy as the bulk scheme to preserve the overall order.
Observe that the computations shown in \Cref{fig:leapFrogLBMConvergence} remain stable (and converge).

\section{Understanding the numerical experiment}\label{sec:understandingTheory}

We now try to investigate the reasons behind all these unexpected results.
To this end, we first show that the Finite Difference scheme is weakly unstable and features---along with the original lattice Boltzmann scheme---two travelling parasitic modes whose speed of propagation sets a specific CFL constraint different from the usual $|\courantNumber| \leq 1$.
Then, we theoretically study the order of convergence by proving that thanks to the initialization, the scheme remains stable, and that the global truncation error is shaped by the parasitic modes.
They also allow to study the qualitative behavior of the error in time and explain supra-convergent results in simulations run under periodic boundary conditions.
Finally, we numerically analyze the case where the initial datum $\initialDatum$ is not smooth.

\subsection{Weak instability for general data}

The stability of \eqref{eq:FourthOrderLeapFrog} needs to be studied using the roots of its amplification polynomial \eqref{eq:amplificationPolynomialFourthOrderLeapFrog}.
\begin{definition}[Stability/weak instability of a Finite Difference scheme]
    Consider a Finite Difference scheme explicitly independent of $\timeStep$ and $\spaceStep$ with amplification polynomial $\amplificationPolynomial{\frequency\spaceStep}{\timeShiftOperator}$.
    \begin{itemize}
        \item We say that the scheme is ``stable'' if, for every $|\frequency\spaceStep| \leq \pi$, $\amplificationPolynomial{\frequency\spaceStep}{\timeShiftOperator}$ is a simple \emph{von Neumann} polynomial.
        \item We say that the scheme is ``weakly unstable'' if for every $|\frequency\spaceStep| \leq \pi$, $\amplificationPolynomial{\frequency\spaceStep}{\timeShiftOperator}$ is a \emph{von Neumann} polynomial, namely all the roots are inside the closed unit disk, and for some $|\tilde{\frequency}\spaceStep| \leq \pi$, $\amplificationPolynomial{\tilde{\frequency}\spaceStep}{\timeShiftOperator}$ has multiple roots on the unit circle.
    \end{itemize}
\end{definition}

If we compute its conjugate reciprocal polynomial (or inversive polynomial) \cite{vieira2019polynomials} given by $\amplificationPolynomialConjugate{\frequency\spaceStep}{\timeShiftOperator} \definitionEquality \timeShiftOperator^{\numberSteps + 1} \amplificationPolynomial{-\frequency\spaceStep}{1/\timeShiftOperator}$, we obtain
\begin{equation*}
    \amplificationPolynomialConjugate{\frequency\spaceStep}{\timeShiftOperator}  =  - \timeShiftOperator^3  - \commonOperator (\frequency\spaceStep)\timeShiftOperator^2 + \conjugate{\commonOperator}(\frequency\spaceStep) \timeShiftOperator + 1.
\end{equation*}
We observe that since $\amplificationPolynomial{\frequency\spaceStep}{\timeShiftOperator} = -\amplificationPolynomialConjugate{\frequency\spaceStep}{\timeShiftOperator}$, the polynomial $\amplificationPolynomial{\frequency\spaceStep}{\timeShiftOperator}$ is said to be ``self-inversive'' \cite[Chapter 10]{marden1949geometry}, \cite[Chapter 1]{milovanovic1994topics}, and \cite{vieira2019polynomials}, that is, there exists $\omega$ on the unit circle such that $\amplificationPolynomial{\frequency\spaceStep}{\timeShiftOperator} = \omega \amplificationPolynomialConjugate{\frequency\spaceStep}{\timeShiftOperator}$. 
In our case, $\omega = -1$ is independent of $\frequency\spaceStep$.
From \cite[Theorem 1]{vieira2019polynomials}, since $\amplificationPolynomial{\frequency\spaceStep}{\timeShiftOperator} $ is of odd degree, we deduce that it has at least one root over the unit circle.
Moreover, if $\rootAmplificationPolynomialFourier{}$ is a root of $\amplificationPolynomial{\frequency\spaceStep}{\timeShiftOperator}$, then also $1/\conjugate{\rootAmplificationPolynomialFourier{}}$ is a root of $\amplificationPolynomial{\frequency\spaceStep}{\timeShiftOperator} $.
The zeros of this kind of polynomial either belong to unit circle or occur in pairs conjugate with respect to the unit circle \cite{kim2008zeros}.
Moreover $\timeShiftOperator^{-1}(\amplificationPolynomialConjugate{\frequency\spaceStep}{0} \amplificationPolynomial{\frequency\spaceStep}{\timeShiftOperator} - \amplificationPolynomialConjugate{\frequency\spaceStep}{\timeShiftOperator} \amplificationPolynomial{\frequency\spaceStep}{0}) \equiv 0$, hence---see \cite[Chapter 4]{strikwerda2004finite}---all the roots of the amplification polynomial lie on the unit circle for every wave-number.

\begin{proposition}[Weak instability of \eqref{eq:FourthOrderLeapFrog}]\label{prop:StabilityThirdLeapFrog}
    Assume that the Courant–Friedrichs–Lewy (CFL) condition  $|\courantNumber| < 1/2$ holds.
    Then, \eqref{eq:FourthOrderLeapFrog} is weakly unstable. More precisely:
    \begin{itemize}
        \item For $|\frequency\spaceStep| \in (0, \pi]$, the amplification polynomial $\amplificationPolynomial{\frequency\spaceStep}{\timeShiftOperator}$ given by \eqref{eq:amplificationPolynomialFourthOrderLeapFrog} has distinct roots on the unit circle.
        \item For $|\frequency\spaceStep| = 0$, the amplification polynomial $\amplificationPolynomial{0}{\timeShiftOperator}$ given by \eqref{eq:amplificationPolynomialFourthOrderLeapFrog} has roots $1$ (single) and $-1$ (double).
    \end{itemize}
\end{proposition}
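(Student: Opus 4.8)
The preceding discussion has already established that, for every $\frequency\spaceStep$, all three roots of $\amplificationPolynomial{\frequency\spaceStep}{\timeShiftOperator}$ lie on the unit circle, so only their multiplicities remain to be settled. The case $\frequency\spaceStep = 0$ is immediate: $\cos 0 = 1$ and $\sin 0 = 0$ give $\commonOperator(0) = 1$, whence $\amplificationPolynomial{0}{\timeShiftOperator} = \timeShiftOperator^3 + \timeShiftOperator^2 - \timeShiftOperator - 1 = (\timeShiftOperator - 1)(\timeShiftOperator + 1)^2$, which is exactly the claimed root structure. For $\frequency\spaceStep \in (0,\pi]$ the statement says precisely that the three unit-modulus roots are pairwise distinct, i.e.\ that the discriminant $\Delta(\frequency\spaceStep)$ of the cubic $\amplificationPolynomial{\frequency\spaceStep}{\timeShiftOperator}$ (as a polynomial in $\timeShiftOperator$) does not vanish; this is what the rest of the argument is about.

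The plan is to compute this discriminant explicitly. Write $\commonOperator = \commonOperator(\frequency\spaceStep) = \alpha + \imaginaryUnit\beta$ with $\alpha,\beta\in\reals$, that is $\alpha = -\tfrac13\bigl(1 - 4\courantNumber^2 + 4(\courantNumber^2-1)\cos(\frequency\spaceStep)\bigr)$ and $\beta = 2\courantNumber\sin(\frequency\spaceStep)$. Using the standard formula for the discriminant of $\timeShiftOperator^3 + \commonOperator\timeShiftOperator^2 - \conjugate{\commonOperator}\timeShiftOperator - 1$ and simplifying with $|\commonOperator|^2 = \alpha^2 + \beta^2$, $\commonOperator^3 + \conjugate{\commonOperator}^3 = 2(\alpha^3 - 3\alpha\beta^2)$ and the factorisation $\alpha^4 + 8\alpha^3 + 18\alpha^2 - 27 = (\alpha-1)(\alpha+3)^3$ gives
\begin{equation*}
    \Delta(\frequency\spaceStep) = |\commonOperator|^4 + 18|\commonOperator|^2 + 4(\commonOperator^3 + \conjugate{\commonOperator}^3) - 27 = \beta^4 + 2(\alpha^2 - 12\alpha + 9)\beta^2 + (\alpha - 1)(\alpha + 3)^3 .
\end{equation*}
Completing the square in $\beta^2$ and invoking the algebraic identity $(\alpha^2 - 12\alpha + 9)^2 - (\alpha-1)(\alpha+3)^3 = 4(3 - 2\alpha)^3$ yields the compact form $\Delta(\frequency\spaceStep) = \bigl(\beta^2 + \alpha^2 - 12\alpha + 9\bigr)^2 - 4(3 - 2\alpha)^3$. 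Inserting the explicit values of $\alpha,\beta$ and setting $v \definitionEquality 1 - \cos(\frequency\spaceStep) \in (0, 2]$ and $k \definitionEquality \tfrac43(1 - \courantNumber^2)$ — so that $|\courantNumber| < 1/2$ is precisely $k \in (1, 4/3]$ — one finds $\alpha - 1 = -kv$, $\beta^2 = (4 - 3k)\,v(2-v)$, hence $\beta^2 + \alpha^2 - 12\alpha + 9 = (k+4)(k-1)v^2 + 4(k+2)v - 2$ and $3 - 2\alpha = 1 + 2kv$, so that
\begin{equation*}
    \Delta(\frequency\spaceStep) = \bigl((k+4)(k-1)v^2 + 4(k+2)v - 2\bigr)^2 - 4(1 + 2kv)^3 .
\end{equation*}

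The proposition thus reduces to proving $\Delta(\frequency\spaceStep) \neq 0$ — in fact $\Delta(\frequency\spaceStep) < 0$ — for all $v \in (0, 2]$ and $k \in (1, 4/3]$. Since $\Delta$ vanishes at $v = 0$, I would factor $-\Delta(\frequency\spaceStep) = v\,\Phi(v)$ with $\Phi$ an explicit cubic polynomial in $v$ whose coefficients are polynomial in $k$, and record that $\Phi(0) = 8(5k+4) > 0$, $\Phi(2) = 8k(2-k)^3 > 0$, and — revealingly — $\Phi|_{k=1} = 8(3 - 2v)^2$, which is non-negative but vanishes at $v = 3/2$, i.e.\ at $|\courantNumber| = 1/2$ and $\frequency\spaceStep = 2\pi/3$: this is exactly why the CFL condition has to be taken \emph{strict}, and it also gives the sharpest point of the estimate. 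The remaining point, $\Phi(v) > 0$ on the whole range $(0,2] \times (1, 4/3]$, is the only genuinely delicate step; it can be handled by elementary (if slightly laborious) estimates, for instance by writing $\Phi(v) = 8(3 - 2v)^2 + (k-1)\,\Psi(v,k)$ and controlling $\Psi$ near $v=3/2$, or by estimating $\Phi$ at the critical points of $v \mapsto \Phi(v)$. Everything else is either routine algebra (the $\frequency\spaceStep = 0$ factorisation, the discriminant computation, the change of variables) or already in hand (all roots on the unit circle), so this final positivity check is where I expect the real work to lie.
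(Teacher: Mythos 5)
Your route is genuinely different from the paper's. The appendix proof runs Miller's Schur--Cohn-type recursion: exploiting self-inversiveness, it reduces the simple von Neumann property (for $\frequency\spaceStep\neq 0$) to showing that the derivative $3\timeShiftOperator^{2}+2\commonOperator(\frequency\spaceStep)\timeShiftOperator-\conjugate{\commonOperator}(\frequency\spaceStep)$ is a Schur polynomial, and the resulting polynomial inequality in $\cos(\frequency\spaceStep)$ and $\courantNumber$ is settled by locating the interior critical point explicitly and invoking symbolic/computer algebra. You instead take the unit-circle location of the roots from the main text and reduce distinctness to the non-vanishing of the discriminant of the cubic. The algebra you present is correct: I verified the discriminant formula for $\timeShiftOperator^{3}+\commonOperator\timeShiftOperator^{2}-\conjugate{\commonOperator}\timeShiftOperator-1$, the identity $(\alpha^{2}-12\alpha+9)^{2}-(\alpha-1)(\alpha+3)^{3}=4(3-2\alpha)^{3}$, the change of variables ($\alpha-1=-kv$, $\beta^{2}=(4-3k)v(2-v)$, $3-2\alpha=1+2kv$), the factorization $-\Delta=v\,\Phi(v)$, and the values $\Phi(0)=8(5k+4)$, $\Phi(2)=8k(2-k)^{3}$, $\Phi|_{k=1}=8(3-2v)^{2}$; the degeneracy at $(k,v)=(1,3/2)$, i.e.\ $|\courantNumber|=1/2$, $\frequency\spaceStep=2\pi/3$, is indeed the reason the CFL bound must be strict. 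The $\frequency\spaceStep=0$ factorization is also fine.

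However, the proof is not complete, and the gap sits exactly at the decisive step. The entire content of the first bullet of the proposition is the inequality $\Phi(v)>0$ on $(0,2]\times(1,4/3]$, and you leave it at ``can be handled by elementary (if slightly laborious) estimates'', sketching two strategies without executing either. This is not a routine verification: the minimum of $\Phi$ in $v$ tends to $0$ as $k\to 1^{+}$ (near $v=3/2$), so any bound must be sharp precisely at the CFL boundary; the paper, at the corresponding stage of its own reduction, has to compute the critical point $\cosineShorthand_{\textnormal{min}}$ explicitly and resolve the final inequality by computer algebra, so there is no reason to expect your version to be a one-line estimate either. Until $\Phi>0$ is actually established (for instance by an explicit critical-point analysis in $v$ with the resulting inequality in $k$ resolved, or by an explicit nonnegative decomposition such as the proposed $\Phi=8(3-2v)^{2}+(k-1)\Psi$ together with a proof that $\Psi>0$ on the relevant range), the argument does not prove the proposition. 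A second, smaller caveat: the premise ``all roots lie on the unit circle'' is asserted in the main text as a consequence of self-inversiveness, but self-inversiveness alone does not imply it (reciprocal polynomials can have root pairs off the circle), and for this family it fails for large $|\courantNumber|$; a self-contained way to secure it in your framework is Cohn's theorem (a self-inversive polynomial has all zeros on the unit circle iff its derivative has all zeros in the closed unit disk), which is exactly the Schur check on $3\timeShiftOperator^{2}+2\commonOperator\timeShiftOperator-\conjugate{\commonOperator}$ that the paper's appendix performs. So, as written, both hard ingredients---the unit-circle location and the positivity of $\Phi$---are borrowed or deferred rather than proved.
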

\Cref{prop:StabilityThirdLeapFrog}---proved in \Cref{app:ProofStabilityLeapFrog}---means that \eqref{eq:FourthOrderLeapFrog} is stable for the $\lebesgueSpace{2}$-norm except for the frequency zero, which could cause linear growth of the solution in time.
Indeed, using the language of linear multi-step schemes for ODEs, the scheme is not zero-stable for $\frequency = 0$ and in this case resembles to \cite[Example 12.5 (d)]{suli2003introduction}. 
If we have not had the spatial direction $\spaceVariable$, we could not expect convergence. Still, the presence of the spatial extension helps us in having an overall stable procedure.

One legitimate question concerns the meaning of the CFL condition $|\courantNumber| < 1/2$.
This can be seen by explicitly computing the roots of the amplification polynomial and perform Taylor expansions in the low-frequency limit $|\frequency\spaceStep| \ll 1$.
This is
\begin{multline}\label{eq:expansionConsistencyEigenvalueD1Q3}
    \rootAmplificationPolynomialFourier{1}(\frequency\spaceStep) = 1 - \imaginaryUnit\courantNumber\frequency\spaceStep - \tfrac{1}{2}\courantNumber^2\frequency^2\spaceStep^2 + \tfrac{\imaginaryUnit}{6}\courantNumber^3 \frequency^3\spaceStep^3 + \tfrac{1}{24}\courantNumber^4\frequency^4\spaceStep^4 + \tfrac{\imaginaryUnit \courantNumber}{360}\bigl (5\courantNumber^4 - 10\courantNumber^2 + 2 \bigr )\frequency^5\spaceStep^5 + \bigO{|\frequency\spaceStep|^6} \\
    = e^{-\imaginaryUnit\courantNumber\frequency\spaceStep (1+\bigO{|\frequency\spaceStep|^4})},
\end{multline}
which proves that---as already emphasized---the method is fourth-order accurate. Even more precisely, \confer{} \cite{strikwerda2004finite, coulombel2020neumann}: it exists a constant $C > 0$ such that $\tfrac{1}{\timeStep} |e^{-\imaginaryUnit\courantNumber\frequency\spaceStep} - \rootAmplificationPolynomialFourier{1}(\frequency\spaceStep) | \leq C \spaceStep^4 |\frequency|^5 $ for $|\frequency\spaceStep| \leq \pi$. 
For the parasitic eigenvalues:
\begin{align}
    \rootAmplificationPolynomialFourier{2}(\frequency\spaceStep) &= - e^{\tfrac{\imaginaryUnit\sqrt{3}}{6} (\sqrt{3}\courantNumber + \sqrt{8 - 5\courantNumber^2})\frequency\spaceStep (1+\bigO{|\frequency\spaceStep|^2})}, \label{eq:expansionSecondEigenvalueD1Q3} \\
    \rootAmplificationPolynomialFourier{3}(\frequency\spaceStep) &= - e^{\tfrac{\imaginaryUnit\sqrt{3}}{6} (\sqrt{3}\courantNumber - \sqrt{8 - 5\courantNumber^2})\frequency\spaceStep (1+\bigO{|\frequency\spaceStep|^2})}. \label{eq:expansionThirdEigenvalueD1Q3} 
\end{align}
The exponential form in \eqref{eq:expansionSecondEigenvalueD1Q3} and \eqref{eq:expansionThirdEigenvalueD1Q3} is inspired by \cite[Theorem 19]{mclachlan2002splitting} and emphasizes that the parasitic eigenvalues essentially behave like pseudo-schemes of order $\spaceStep^2$ for a different flow compared to the target equation.
Remark that $8 - 5\courantNumber^2 > 0$ by the constraint $|\courantNumber| \leq 1$ that must naturally hold for an explicit method with spatial stencil of one, see \cite{strang1962trigonometric}.
The first parasitic mode---brought by $\rootAmplificationPolynomialFourier{2}$---propagates backward, whatever the sign of $\courantNumber$, whereas the second mode---carried by $\rootAmplificationPolynomialFourier{3}$---always propagates forward.
Both produce rapid checkerboard-like oscillating solutions since $\rootAmplificationPolynomialFourier{2}(0) = \rootAmplificationPolynomialFourier{3}(0) = -1$.
We would like the parasitic waves to propagate slower than the speed of information of the scheme, which is equal to $\latticeVelocity = \spaceStep / \timeStep$. 
This can be stated as
\begin{equation*}
    \begin{cases}
        \tfrac{\sqrt{3}}{6} (\sqrt{3}\courantNumber + \sqrt{8 - 5\courantNumber^2}) < 1, \qquad &\to \qquad \courantNumber \in (-1, 1/2), \\
        \tfrac{\sqrt{3}}{6} (\sqrt{3}\courantNumber - \sqrt{8 - 5\courantNumber^2}) > -1, \qquad &\to \qquad \courantNumber \in (-1/2, 1),
    \end{cases}
    \qquad \to \qquad |\courantNumber| < 1/2,
\end{equation*}
which is indeed the CFL condition by \Cref{prop:StabilityThirdLeapFrog}.
This demonstrates that in this case, the CFL constraint is a condition on the speed of propagation of information by the parasitic modes.
By studying $\tfrac{\sqrt{3}}{6} (\sqrt{3}\courantNumber + \sqrt{8 - 5\courantNumber^2}) >  \courantNumber$, we see that the velocity of the parasitic waves is always larger than the one of the physical wave, as expected, because otherwise the CFL constraint would have stemmed from the speed of propagation of information by the physical mode.

We now understand why the numerical simulations in \Cref{sec:NumericalExperiment} go against \cite[Theorem 10.6.2]{strikwerda2004finite}.
This results needs the numerical scheme to be genuinely stable\dots and \eqref{eq:FourthOrderLeapFrog} is not---according to \Cref{prop:StabilityThirdLeapFrog}.
The aim of the sections to come is to understand why the numerical scheme still behaves in a stable fashion and converges, though with unexpected rates.

\begin{remark}[A simpler numerical scheme with the similar features]\label{rem:simplerExample}
    We can construct a simpler multi-step scheme with  features analogous to \eqref{eq:FourthOrderLeapFrog}, building an \emph{ad hoc} amplification polynomial. 
    We would like it to have a stable method for all wave-numbers except for the frequency zero, where a double root $-1$ is present.
    We do not request all the roots to be on the unit circle for all $|\frequency\spaceStep| \leq \pi$.
    We thus consider 
    \begin{equation}\label{eq:adHocAmplification}
        \amplificationPolynomial{\frequency\spaceStep}{\timeShiftOperator} = (\timeShiftOperator - \rootAmplificationPolynomialFourierCustom{OS4}(\frequency\spaceStep)) (\timeShiftOperator + \cos(\frequency\spaceStep)) (\timeShiftOperator + 1),
    \end{equation}
    where $\rootAmplificationPolynomialFourierCustom{OS4}(\frequency\spaceStep)$ is the amplification factor of the OS4 scheme. It could indeed be replaced by the one of any dissipative one-step scheme (this constraint would exclude the Lax-Friedrichs scheme since we would have a double root $-1$ at $\frequency\spaceStep = \pi$).
    This type of scheme gives the same surprising results as in \Cref{sec:NumericalExperiment}, which are not included in the paper.
    This is rather paradoxical but instructive: if we had taken the fourth-order one-step scheme associated with $\rootAmplificationPolynomialFourierCustom{OS4}$ and made a (finite) number of iterations at the very beginning with a third-order scheme, we would have preserved an overall fourth-order.
    Similarly, if we had considered parasitic roots in of the amplification polynomial $\rootAmplificationPolynomialFourier{2}$ and $\rootAmplificationPolynomialFourier{3}$ fulfilling the stability condition also at $\frequency\spaceStep = 0$, for example $\amplificationPolynomial{\frequency\spaceStep}{\timeShiftOperator} = (\timeShiftOperator - \rootAmplificationPolynomialFourierCustom{OS4}(\frequency\spaceStep)) (\timeShiftOperator + \tfrac{1}{2}) (\timeShiftOperator + \tfrac{1}{3})$, third-order initializations would have been enough to preserve fourth order.
    These two observations confirm that in this weakly unstable framework, the parasitic roots that we have artificially put along $\rootAmplificationPolynomialFourierCustom{OS4}$ start playing a role as far as consistency (and thus the order of accuracy) is concerned.
\end{remark}

\begin{remark}[Trying to re-establish stability]\label{rem:tryingToStabilize}
    Since, if we do not set $\relaxationParameterSecondMoment = \relaxationParameterThirdMoment = 2$ as we did in \Cref{sec:tuningFreeParam} to enforce fourth-order consistency, we have that $\spectrum{\fourierTransformed{\schemeMatrix}(0)} = \{1, 1-\relaxationParameterSecondMoment, 1-\relaxationParameterThirdMoment \}$, or equivalently $\amplificationPolynomial{0}{ \timeShiftOperator} = \determinant{\timeShiftOperator \identityMatrix - \fourierTransformed{\schemeMatrix}(0)} = (\timeShiftOperator - 1)(\timeShiftOperator + \relaxationParameterSecondMoment - 1)(\timeShiftOperator + \relaxationParameterThirdMoment - 1)$, we could hope to solve the ``collision'' between the second and the third root lying on the unit disk and coinciding by considering $\relaxationParameterSecondMoment = 2$ and $\relaxationParameterThirdMoment = 2 - \spaceStep^{\alpha}$ with $\alpha \in \reals$. Here, $\alpha$ would be chosen large enough not to perturb the fourth order of the scheme.
    This setting does not provide the expected result and gives the same result as \Cref{sec:NumericalExperiment}. 
    This comes from the fact that now \cite[Theorem 4.2.2]{strikwerda2004finite} applies and stability requires that $\rootAmplificationPolynomialFourier{2}(0)$ and $\rootAmplificationPolynomialFourier{3}(0)$ are apart by a positive quantity independent of $\spaceStep$ when the space step is small, \idEst{} $|\rootAmplificationPolynomialFourier{2}(0) - \rootAmplificationPolynomialFourier{3}(0)| \geq c_1$. However, in our case $|\rootAmplificationPolynomialFourier{2}(0) - \rootAmplificationPolynomialFourier{3}(0)| = \spaceStep^{\alpha}$, hence the scheme has not been stabilized.
\end{remark}

\subsection{Understanding convergence}

As observed at the very beginning and through \Cref{rem:simplerExample}, the consistency analysis of the whole scheme does no longer boil down to consider the behavior of the scheme as determined only by $\rootAmplificationPolynomialFourier{1}$, but we have to take all the roots into account and clarify how these different modes are excited and interact by the choice of initialization schemes.

\subsubsection{Several decompositions of the discrete scheme}

This is achieved using several kinds of decompositions of the discrete solution.
Given the amplification polynomial $\amplificationPolynomial{\frequency\spaceStep}{\timeShiftOperator} = \timeShiftOperator^{\numberSteps + 1} + \sum_{\indicesTime = 0}^{\indicesTime = \numberSteps} \coefficientAmplificationPolynomial_{\indicesTime}(\frequency\spaceStep)\timeShiftOperator^{\indicesTime}$ of an explicit scheme, we introduce its companion matrix (or amplification matrix \cite{coulombel2021leray}) 
\begin{equation*}
    \companionMatrix(\frequency\spaceStep) = 
    \begin{bmatrix}
        -\coefficientAmplificationPolynomial_{\numberSteps}(\frequency\spaceStep) & \cdots & -\coefficientAmplificationPolynomial_1(\frequency\spaceStep) & -\coefficientAmplificationPolynomial_0(\frequency\spaceStep) \\
        1 & \cdots & 0 & 0 \\
        \vdots & \ddots & \vdots & \vdots \\
        0 & \cdots & 0 & 0 \\
        0 & \cdots & 1 & 0
    \end{bmatrix}.
\end{equation*}
We form the amplification factors, given, for $\indicesTime \geq \numberSteps + 1$ by 
\begin{equation}\label{eq:definitionAmplificationFactors}
    \amplificationFactorFourier{\indicesTime} (\frequency\spaceStep) = 
    \transpose{\canonicalBasisVector{1}} \companionMatrix(\frequency\spaceStep)^{\indicesTime - \numberSteps} 
    \transpose{(\amplificationFactorFourier{\numberSteps}(\frequency\spaceStep), \dots, \amplificationFactorFourier{1}(\frequency\spaceStep), 1)},
\end{equation}
so that we have $\solutionDiscreteFourier{\indicesTime} (\frequency) = \amplificationFactorFourier{\indicesTime} (\frequency\spaceStep) \solutionDiscreteFourier{0} (\frequency) $.
Here $\amplificationFactorFourier{\numberSteps}(\frequency\spaceStep), \dots, \amplificationFactorFourier{1}(\frequency\spaceStep)$ are the amplification factors of the initialization schemes.
Let us point out a feature concerning the companion matrix of a weakly unstable scheme.
\begin{proposition}\label{prop:GrowthCompanion}
    Let $\companionMatrix(\frequency\spaceStep)$ for $|\frequency\spaceStep| \leq \pi$ be the companion matrix of a weakly unstable Finite Difference scheme.
    Then $\companionMatrix(\tilde{\frequency}\spaceStep)^{\indicesTime}$, for some $|\tilde{\frequency}\spaceStep| \leq \pi$, grows (polynomially) with $\indicesTime \in \naturals$.
\end{proposition}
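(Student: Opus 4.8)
The plan is to exploit the single structural feature of a companion matrix that is relevant here, namely that it is \emph{nonderogatory}: its minimal polynomial equals its characteristic polynomial $\amplificationPolynomial{\frequency\spaceStep}{\timeShiftOperator} = \determinant{\timeShiftOperator\identityMatrix - \companionMatrix(\frequency\spaceStep)}$. First I would record this classical fact, either by citing a standard reference on companion matrices or by the one-line observation that $\canonicalBasisVector{1}$ is a cyclic vector for $\companionMatrix(\frequency\spaceStep)$: one checks inductively that $\companionMatrix(\frequency\spaceStep)^{k}\canonicalBasisVector{1}$ equals $\canonicalBasisVector{k+1}$ plus a linear combination of $\canonicalBasisVector{1}, \dots, \canonicalBasisVector{k}$ for $0 \le k \le \numberSteps$, so that no nonzero polynomial of degree at most $\numberSteps$ can annihilate $\companionMatrix(\frequency\spaceStep)$ and its minimal polynomial must therefore have degree $\numberSteps + 1$.

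Next I would invoke the definition of weak instability: there is a wave-number $\tilde{\frequency}$ with $|\tilde{\frequency}\spaceStep| \le \pi$ such that $\amplificationPolynomial{\tilde{\frequency}\spaceStep}{\timeShiftOperator}$ has a root $\lambda$ on the unit circle, $|\lambda| = 1$, of multiplicity $m \ge 2$. Combined with the nonderogatory property, this shows that $(\timeShiftOperator - \lambda)^{m}$ divides the minimal polynomial of $\companionMatrix(\tilde{\frequency}\spaceStep)$, hence the Jordan canonical form of $\companionMatrix(\tilde{\frequency}\spaceStep)$ contains a Jordan block $J_{m}(\lambda)$ of size $m \ge 2$ attached to $\lambda$ (and, by nonderogatoriness, exactly one block per eigenvalue).

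Finally I would read off the growth from the elementary formula for powers of a Jordan block: the entry of $J_{m}(\lambda)^{\indicesTime}$ on the $j$-th superdiagonal is $\binom{\indicesTime}{j}\lambda^{\indicesTime - j}$. Since $m \ge 2$, the first superdiagonal entry equals $\indicesTime\,\lambda^{\indicesTime - 1}$, of modulus $\indicesTime$ because $|\lambda| = 1$; writing $\companionMatrix(\tilde{\frequency}\spaceStep) = PJP^{-1}$ we conclude $\| \companionMatrix(\tilde{\frequency}\spaceStep)^{\indicesTime} \| \ge c\,\indicesTime$ for some $c > 0$ depending only on $P$, which is the announced (at least linear, hence polynomial) growth in $\indicesTime \in \naturals$. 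If one also wants the complementary statement that the growth is \emph{only} polynomial, of degree exactly $m - 1$, it follows from the same formula together with the von Neumann condition $|\rootAmplificationPolynomialFourier{}| \le 1$ on every root, which rules out exponential contributions; this refinement is not required by the proposition.

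There is essentially no genuine obstacle in this argument: it is standard linear algebra once the companion structure is invoked. The only point that deserves a careful word is the nonderogatory property of companion matrices — it is precisely what guarantees that a multiple root of $\amplificationPolynomial{\tilde{\frequency}\spaceStep}{\timeShiftOperator}$ produces a Jordan block of size at least $2$, rather than an accidentally diagonalizable eigenvalue — and I would either prove it in the one line above using the cyclic vector $\canonicalBasisVector{1}$ or simply cite it.
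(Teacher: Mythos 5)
Your proposal is correct and follows essentially the same route as the paper's own proof: use the fact that the companion matrix is nonderogatory (characteristic polynomial equals minimal polynomial), so the multiple unit-circle root at the unstable wave-number forces a Jordan block of size at least two, whose powers grow linearly in $\indicesTime$. You merely spell out the details the paper leaves implicit (the cyclic-vector argument for $\canonicalBasisVector{1}$, the explicit formula for powers of a Jordan block, and the similarity-invariant norm bound), which is fine.
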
 
\begin{proof}
    Let $|\tilde{\frequency}\spaceStep| \leq \pi$ be one of the frequencies where multiple roots of the amplification polynomial happen to be on the unit circle.
    The companion matrix has coinciding characteristic (\idEst{} the amplification polynomial) and minimal polynomial.
    Therefore, for this frequency, its Jordan canonical form inferred from the minimal polynomial features a block for the multiple eigenvalue on the unit circle having size larger than one, giving the claimed growth.
\end{proof}

\paragraph{Green decomposition}

A tool to understand the different role of the initialization $\amplificationFactorFourier{2}$ compared to $\amplificationFactorFourier{1}$ in \eqref{eq:FourthOrderLeapFrog} can be found using Green functions \cite{cheng1999general, coulombel2022generalized, coulombel22green}.
We remark that the phenomenon observed in \Cref{sec:NumericalExperiment} corresponds to the fact that the minimal requirement to preserve the overall order four is that the vector ${(\amplificationFactorFourier{2}, \amplificationFactorFourier{1}, 1)}$ must to be equal to the eigenvector ${(\rootAmplificationPolynomialFourier{1}^2, \rootAmplificationPolynomialFourier{1}, 1)}$ of the companion matrix $\companionMatrix$ associated with $\rootAmplificationPolynomialFourier{1}$ at different orders for $|\frequency\spaceStep| \ll 1$ according to the component (four for the first and third component, three for the second one).
Otherwise said, one wants at least ${(\amplificationFactorFourier{2}, \amplificationFactorFourier{1}, 1)} = {(\rootAmplificationPolynomialFourier{1}^2 + \bigO{|\frequency\spaceStep|^5}, \rootAmplificationPolynomialFourier{1} + \bigO{|\frequency\spaceStep|^4}, 1 + \bigO{|\frequency\spaceStep|^5})}$.
We therefore isolate the role of each initialization scheme by considering the $\indicesFree$-th Green functions $\greenFunction{\indicesTime}{\indicesFree}(\frequency\spaceStep)$---for $\indicesFree \in \integerInterval{0}{\numberSteps}$---defined by
\begin{equation*}
    \begin{cases}
        \greenFunction{\indicesTime + 1}{\indicesFree} = -\sum\limits_{\indicesFreeTwo = 0}^{\numberSteps} \coefficientAmplificationPolynomial_{\indicesFreeTwo} \greenFunction{\indicesTime + \indicesFreeTwo - \numberSteps}{\indicesFree} = \transpose{\canonicalBasisVector{1}} \companionMatrix \transpose{(\greenFunction{\indicesTime}{\indicesFree}, \dots, \greenFunction{\indicesTime - \numberSteps}{\indicesFree})}, \qquad &\textnormal{for} \quad \indicesTime \geq \numberSteps, \\
        \greenFunction{\indicesTime}{\indicesFree} = \delta_{\indicesTime, \indicesFree}, \qquad &\textnormal{for} \quad \indicesTime \in \integerInterval{0}{\numberSteps}.
    \end{cases}
\end{equation*}
Then, for $\indicesTime \geq \numberSteps + 1$, we have that $\greenFunction{\indicesTime}{\indicesFree} = \transpose{\canonicalBasisVector{1}} \companionMatrix^{\indicesTime - \numberSteps}\canonicalBasisVector{\numberSteps - \indicesFree + 1}$ and, by adding and subtracting well-selected quantities:
\begin{multline}\label{eq:generalGreenDecomposition}
    \amplificationFactorFourier{\indicesTime}(\frequency\spaceStep) = \sum_{\indicesFree = 0}^{\numberSteps} \greenFunction{\indicesTime}{\indicesFree}(\frequency\spaceStep) \amplificationFactorFourier{\indicesFree}(\frequency\spaceStep) = \sum_{\indicesFree = 0}^{\numberSteps} \greenFunction{\indicesTime}{\indicesFree}(\frequency\spaceStep)\rootAmplificationPolynomialFourier{1}(\frequency\spaceStep)^{\indicesFree} + \sum_{\indicesFree = 1}^{\numberSteps} \greenFunction{\indicesTime}{\indicesFree}(\frequency\spaceStep) (\amplificationFactorFourier{\indicesFree}(\frequency\spaceStep) - \rootAmplificationPolynomialFourier{1}(\frequency\spaceStep)^{\indicesFree}) \\
    = \transpose{\canonicalBasisVector{1}} {\companionMatrix}(\frequency\spaceStep)^{\indicesTime - \numberSteps} \transpose{(\rootAmplificationPolynomialFourier{1}(\frequency\spaceStep)^{\numberSteps}, \dots, \rootAmplificationPolynomialFourier{1}(\frequency\spaceStep), 1)} + \sum_{\indicesFree = 1}^{\numberSteps} \greenFunction{\indicesTime}{\indicesFree}(\frequency\spaceStep) (\amplificationFactorFourier{\indicesFree}(\frequency\spaceStep) - \rootAmplificationPolynomialFourier{1}(\frequency\spaceStep)^{\indicesFree}) \\
    = \rootAmplificationPolynomialFourier{1}(\frequency\spaceStep)^{\indicesTime} + \sum_{\indicesFree = 1}^{\numberSteps} \greenFunction{\indicesTime}{\indicesFree}(\frequency\spaceStep) (\amplificationFactorFourier{\indicesFree}(\frequency\spaceStep) - \rootAmplificationPolynomialFourier{1}(\frequency\spaceStep)^{\indicesFree}),
\end{multline}
where the last equality is obtained using the fact that $\transpose{(\rootAmplificationPolynomialFourier{1}(\frequency\spaceStep)^{\numberSteps}, \dots, \rootAmplificationPolynomialFourier{1}(\frequency\spaceStep), 1)}$ is the eigenvector of $\companionMatrix$ relative to the eigenvalue $\rootAmplificationPolynomialFourier{1}$.

\paragraph{Modal decomposition}
Another decomposition of $\amplificationFactorFourier{\indicesTime}$ can be found as follows.
If we assume that for a given wave-number $\frequency$ such that $|\frequency\spaceStep|\leq \pi$, all the roots $\rootAmplificationPolynomialFourier{1}(\frequency\spaceStep), \dots, \rootAmplificationPolynomialFourier{\numberSteps + 1}(\frequency\spaceStep)$ are distinct, we have the so-called ``modal'' decomposition, directly inspired from the theory of linear recurrences, which reads
\begin{equation}\label{eq:generalModalDecomposition}
    \amplificationFactorFourier{\indicesTime}(\frequency\spaceStep) = \sum_{\indicesFree = 1}^{\numberSteps + 1} \coefficientModalDecomposition{\indicesFree}(\frequency\spaceStep) \rootAmplificationPolynomialFourier{\indicesFree}(\frequency\spaceStep)^{\indicesTime},
\end{equation}  
and where the coefficients $\coefficientModalDecomposition{\indicesFree}$ are determined by the initialization schemes $\amplificationFactorFourier{\numberSteps}, \dots, \amplificationFactorFourier{1}$.
In particular, introducing the Vandermonde matrix 
\begin{equation*}
    \vandermondeMatrix(\frequency\spaceStep) = 
    \begin{bmatrix}
        \rootAmplificationPolynomialFourier{1}(\frequency\spaceStep)^{\numberSteps} & \cdots & \rootAmplificationPolynomialFourier{\numberSteps+1}(\frequency\spaceStep)^{\numberSteps} \\
        \vdots & & \vdots \\
        \rootAmplificationPolynomialFourier{1}(\frequency\spaceStep) & \cdots & \rootAmplificationPolynomialFourier{\numberSteps+1}(\frequency\spaceStep) \\
        1 &  & 1
    \end{bmatrix},
\end{equation*}
gives that $\coefficientModalDecomposition{1}, \dots \coefficientModalDecomposition{\numberSteps+1}$ satisfy the linear system $\vandermondeMatrix(\frequency\spaceStep) \transpose{(\coefficientModalDecomposition{1}(\frequency\spaceStep), \dots, \coefficientModalDecomposition{\numberSteps+1}(\frequency\spaceStep))} = \transpose{(\amplificationFactorFourier{\numberSteps}(\frequency\spaceStep), \amplificationFactorFourier{\numberSteps-1}(\frequency\spaceStep), \dots, 1)}$.
The fact that the Vandermonde matrix can be inverted comes from the assumption of dealing with distinct roots.

\paragraph{Comparison between decompositions}

Let us comment on the different properties of the Green decomposition \eqref{eq:generalGreenDecomposition} \emph{vs.} the modal decomposition \eqref{eq:generalModalDecomposition}.
\begin{itemize}
    \item Green decomposition \eqref{eq:generalGreenDecomposition}.
    It focuses on the consistency eigenvalue $\rootAmplificationPolynomialFourier{1}$: one sees the overall scheme after $\indicesTime$ time-steps as the application of the pseudo-scheme associated with $\rootAmplificationPolynomialFourier{1}$ $\indicesTime$-times, plus some perturbation induced by the initialization schemes $\amplificationFactorFourier{\numberSteps}, \dots, \amplificationFactorFourier{1}$ and their deviation from $\rootAmplificationPolynomialFourier{1}^{\numberSteps}, \dots, \rootAmplificationPolynomialFourier{1}$.
    These deviations are weighted by the Green functions.
    This decomposition is therefore used to understand consistency and the fact that the overall scheme reacts differently according to the step fed by a given initialization scheme. 
    Its main drawback is that it mixes the parasitic modes $\rootAmplificationPolynomialFourier{2}, \dots, \rootAmplificationPolynomialFourier{\numberSteps + 1}$ and thus hides their role of rapidly oscillating transport modes at different velocities (see \eqref{eq:expansionSecondEigenvalueD1Q3} and \eqref{eq:expansionThirdEigenvalueD1Q3}) compared to the physical mode.
    \item Modal decomposition \eqref{eq:generalModalDecomposition}. 
    It can informally described by the sentence ``All modes are created equal'', for it does not emphasize any root between $\rootAmplificationPolynomialFourier{1}, \rootAmplificationPolynomialFourier{2}, \dots, \rootAmplificationPolynomialFourier{\numberSteps + 1}$. It aims at alleviating the drawback of the Green decomposition, allowing to see the discrete solution as the superposition of $\numberSteps + 1$ modes, each one having its own velocity.
    This helps the study of qualitative properties of the discrete solution.
    However, this approach is less suitable (at least without explicit computation of the coefficients at hand) to proceed to a rigorous study of the consistency of the scheme.
\end{itemize}

\subsubsection{Stability}\label{sec:stability}

In the cases where the amplification polynomials are given by \eqref{eq:amplificationPolynomialFourthOrderLeapFrog} and \eqref{eq:adHocAmplification}, $\companionMatrix(\frequency\spaceStep)^{\indicesTime - \numberSteps}$ is not uniformly power bounded \cite{leveque1984resolvent}, because the schemes are weakly unstable, \confer{} \Cref{prop:GrowthCompanion}. 
In both cases, we have 
\begin{equation}\label{eq:powersCompanionZeroFrequency}
    \transpose{\canonicalBasisVector{1}}  \companionMatrix(0)^{\indicesTime - 2}  =
    (\tfrac{1}{4}(-1)^{\indicesTime} (2\indicesTime-1)+\tfrac{1}{4}, \tfrac{1}{2}(-1)^{\indicesTime + 1}+\tfrac{1}{2}, \tfrac{1}{4}(-1)^{\indicesTime + 1} (2\indicesTime-3)+\tfrac{1}{4}),
\end{equation}
where the first and last entries diverge linearly with $\indicesTime$.
However, the reason why the simulations remain stable is that this instability is not excited. 
Indeed, any reasonable---meaning at least zero-order accurate---initialization scheme that one can consider is such that $\amplificationFactorFourier{2}(0) = \amplificationFactorFourier{1}(0) = 1$, hence we obtain from \eqref{eq:definitionAmplificationFactors} that $\amplificationFactorFourier{\indicesTime} (0) = 1$ for any $\indicesTime \in \naturals$, which is bounded as $\indicesTime$ grows, and thus keeps the simulation stable, \confer{} \Cref{sec:NumericalExperiment}. 
The other frequencies $\frequency \neq 0$ are stable (\idEst{} $\transpose{\canonicalBasisVector{1}}  \companionMatrix(\frequency\spaceStep)^{\indicesTime - 2}$ is bounded with $\indicesTime$) thanks to \Cref{prop:StabilityThirdLeapFrog}.
This indeed shows that there exists $\constantStabilityGeneralized > 0$ independent of $\indicesTime \in \naturals$ and $|\frequency\spaceStep|\leq \pi$ such that 
\begin{equation}\label{eq:stabilityAmplification}
    |\amplificationFactorFourier{\indicesTime}(\frequency\spaceStep)| \leq \constantStabilityGeneralized.
\end{equation}
Observe that the constant $\constantStabilityGeneralized$ can be---for the considered tests---slightly larger than one, but this is unimportant.
This constant does not depend on the final time $\finalTime$.
Notice that \eqref{eq:stabilityAmplification} exactly coincides---for the considered class of initializations---with the notion of stability stated in \cite[Theorem 2.1.1]{gustafsson1995time}.

\subsubsection{Consistency}

Since initializations must be seriously taken into account due to instabilities, we cannot use the notion of local truncation error to derive an estimate on the global truncation error---the quantity one is eventually interested in. 
We therefore have to propose estimates directly on the global truncation error, as in \cite[Section 2.2]{gustafsson1995time}, using the decomposition on the Green functions \eqref{eq:generalGreenDecomposition}.
\begin{remark}[Smoothness]
    We remark that $\amplificationFactorFourier{\indicesTime}(\frequency\spaceStep)$ and the Green functions $\greenFunction{\indicesTime}{2}(\frequency\spaceStep), \greenFunction{\indicesTime}{1}(\frequency\spaceStep)$, and $\greenFunction{\indicesTime}{0}(\frequency\spaceStep)$ are smooth functions of their argument since they are products of smooth functions of the form $e^{\pm i \frequency\spaceStep}$.
    This is distinct from the difficulties that can arise in the explicit determination of formul\ae{} or of their Taylor expansions as $|\frequency\spaceStep|\ll 1$, due to the fact that the multiplicity of the roots of the amplification polynomial changes with $\frequency\spaceStep$, \confer{} \Cref{prop:StabilityThirdLeapFrog}.
    Since these quantities are smooth functions, their Taylor expansions for $|\frequency \spaceStep| \ll 1$ will be determined using their explicit expressions for the case $|\frequency\spaceStep| \neq 0$, which are found by the standard theory of linear constant coefficient recurrence relations, and then carefully passing to the limit.    
\end{remark}
For every $|\frequency\spaceStep| \leq \pi$ (arguments are sometimes omitted for the sake of compactness), \eqref{eq:generalGreenDecomposition} provides
    \begin{equation}\label{eq:greenDecompositionConsistency}
        \amplificationFactorFourier{\indicesTime}(\frequency\spaceStep) =  \rootAmplificationPolynomialFourier{1}(\frequency\spaceStep)^{\indicesTime}+ \greenFunction{\indicesTime}{2} (\frequency\spaceStep) (\amplificationFactorFourier{2}(\frequency\spaceStep)   - \rootAmplificationPolynomialFourier{1}(\frequency\spaceStep)^2)+ \greenFunction{\indicesTime}{1}(\frequency\spaceStep)  (\amplificationFactorFourier{1}(\frequency\spaceStep)   - \rootAmplificationPolynomialFourier{1}(\frequency\spaceStep) ).
    \end{equation}
    The explicit formul\ae{} in terms of the roots are 
    \begin{equation}\label{eq:explicitSecondGreen}
        \greenFunction{\indicesTime}{2} (\frequency\spaceStep) = 
        \begin{cases}
            \tfrac{\rootAmplificationPolynomialFourier{1}^{\indicesTime}(\rootAmplificationPolynomialFourier{2}-\rootAmplificationPolynomialFourier{3}) - \rootAmplificationPolynomialFourier{2}^{\indicesTime}(\rootAmplificationPolynomialFourier{1}-\rootAmplificationPolynomialFourier{3}) + \rootAmplificationPolynomialFourier{3}^{\indicesTime}(\rootAmplificationPolynomialFourier{1}-\rootAmplificationPolynomialFourier{2})}{\rootAmplificationPolynomialFourier{1}^{2}(\rootAmplificationPolynomialFourier{2}-\rootAmplificationPolynomialFourier{3}) - \rootAmplificationPolynomialFourier{2}^{2}(\rootAmplificationPolynomialFourier{1}-\rootAmplificationPolynomialFourier{3}) + \rootAmplificationPolynomialFourier{3}^{2}(\rootAmplificationPolynomialFourier{1}-\rootAmplificationPolynomialFourier{2})}, \qquad &|\frequency\spaceStep| \neq 0, \\
            \tfrac{1}{4}(2(-1)^{\indicesTime} \indicesTime + (-1)^{\indicesTime + 1} + 1), \qquad &|\frequency\spaceStep| = 0.
        \end{cases}
    \end{equation}
    We observe that this Green function is the potentially explosive one: still it is sufficient to have a zero-order scheme for $\amplificationFactorFourier{2}$ in order to disengage this term. 
    Since $\greenFunction{\indicesTime}{2} (\frequency\spaceStep)$ is smooth, we can obtain higher order terms for the limit $|\frequency\spaceStep| \ll 1$ using \eqref{eq:explicitSecondGreen} for $|\frequency\spaceStep| \neq 0$. This yields 
    \begin{equation*}
        \greenFunction{\indicesTime}{2} (\frequency\spaceStep) = (-1)^{\indicesTime} \left \lfloor \tfrac{\indicesTime}{2} \right \rfloor + (-1)^{\indicesTime} \left \lfloor \tfrac{\indicesTime - 1}{2} \right \rfloor \bigl ( \left \lfloor \tfrac{\indicesTime - 1}{2} \right \rfloor + 1 \bigr ) i\courantNumber\frequency\spaceStep + \bigO{\indicesTime^3}(\frequency\spaceStep)^2 + \bigO{|\frequency\spaceStep|^3}.
    \end{equation*}
    The growth of this Green function with $\indicesTime$ shows why we need---besides stability---one order more for the initialization $\amplificationFactorFourier{2}$, the error coming from this choice is amplified (we might say that it resonates) and accumulates in time analogously to the consistency mode $\rootAmplificationPolynomialFourier{1}^{\indicesTime}$. 
    Thus, it needs to be of the same order as $\rootAmplificationPolynomialFourier{1}$ not to lower the overall order. 
    The explicit form of \eqref{eq:explicitSecondGreen}, in particular the denominator, comes from the fact that for $|\frequency\spaceStep| \neq 0$, the companion matrix is diagonalisable:
    \begin{align*}
        \companionMatrix(\frequency\spaceStep)^{\indicesTime - 2} &= \vandermondeMatrix(\frequency\spaceStep)^{-1} \diagonalMatrix{\rootAmplificationPolynomialFourier{1}(\frequency\spaceStep)^{\indicesTime - 2}, \rootAmplificationPolynomialFourier{2}(\frequency\spaceStep)^{\indicesTime - 2}, \rootAmplificationPolynomialFourier{3}(\frequency\spaceStep)^{\indicesTime - 2}}  \vandermondeMatrix(\frequency\spaceStep) \\
        &= \frac{\adjugate{\vandermondeMatrix(\frequency\spaceStep)}}{\determinant{\vandermondeMatrix(\frequency\spaceStep)}} \diagonalMatrix{\rootAmplificationPolynomialFourier{1}(\frequency\spaceStep)^{\indicesTime - 2}, \rootAmplificationPolynomialFourier{2}(\frequency\spaceStep)^{\indicesTime - 2}, \rootAmplificationPolynomialFourier{3}(\frequency\spaceStep)^{\indicesTime - 2}}  \vandermondeMatrix(\frequency\spaceStep),
    \end{align*}
    with $\determinant{\vandermondeMatrix(\frequency\spaceStep)} = \rootAmplificationPolynomialFourier{1}^{2}(\rootAmplificationPolynomialFourier{2}-\rootAmplificationPolynomialFourier{3}) - \rootAmplificationPolynomialFourier{2}^{2}(\rootAmplificationPolynomialFourier{1}-\rootAmplificationPolynomialFourier{3}) + \rootAmplificationPolynomialFourier{3}^{2}(\rootAmplificationPolynomialFourier{1}-\rootAmplificationPolynomialFourier{2})$.
    Unsurprisingly, $\determinant{\vandermondeMatrix(0)} = 0$. Still, also the numerator $\rootAmplificationPolynomialFourier{1}^{\indicesTime}(\rootAmplificationPolynomialFourier{2}-\rootAmplificationPolynomialFourier{3}) - \rootAmplificationPolynomialFourier{2}^{\indicesTime}(\rootAmplificationPolynomialFourier{1}-\rootAmplificationPolynomialFourier{3}) + \rootAmplificationPolynomialFourier{3}^{\indicesTime}(\rootAmplificationPolynomialFourier{1}-\rootAmplificationPolynomialFourier{2}) = 0$ for $\frequency\spaceStep = 0$.
    For the other Green function:
    \begin{equation*}
        \greenFunction{\indicesTime}{1} (\frequency\spaceStep) = 
        \begin{cases}
            -\tfrac{\rootAmplificationPolynomialFourier{1}^{\indicesTime}(\rootAmplificationPolynomialFourier{2}^2-\rootAmplificationPolynomialFourier{3}^2) - \rootAmplificationPolynomialFourier{2}^{\indicesTime}(\rootAmplificationPolynomialFourier{1}^2-\rootAmplificationPolynomialFourier{3}^2) + \rootAmplificationPolynomialFourier{3}^{\indicesTime}(\rootAmplificationPolynomialFourier{1}^2-\rootAmplificationPolynomialFourier{2}^2)}{\rootAmplificationPolynomialFourier{1}^{2}(\rootAmplificationPolynomialFourier{2}-\rootAmplificationPolynomialFourier{3}) - \rootAmplificationPolynomialFourier{2}^{2}(\rootAmplificationPolynomialFourier{1}-\rootAmplificationPolynomialFourier{3}) + \rootAmplificationPolynomialFourier{3}^{2}(\rootAmplificationPolynomialFourier{1}-\rootAmplificationPolynomialFourier{2})}, \qquad &|\frequency\spaceStep| \neq 0, \\
            -\tfrac{1}{2}((-1)^{\indicesTime} - 1), \qquad &|\frequency\spaceStep| = 0.
        \end{cases}
    \end{equation*}
    As before
    \begin{equation*}
        \greenFunction{\indicesTime}{1} (\frequency\spaceStep) =\tfrac{1-(-1)^{\indicesTime}}{2} + ((-1)^{\indicesTime}-1) \left \lfloor \tfrac{\indicesTime}{2} \right \rfloor i\courantNumber\frequency\spaceStep + \bigO{\indicesTime^2} (\frequency\spaceStep)^2 + \bigO{|\frequency\spaceStep|^3}.
    \end{equation*}
    Remark that the first two terms in this expansions are zero whenever $\indicesTime$ is even.
    This means that the solution at even time steps experiences---in the low frequency limit---a very reduced influence of the first initialization scheme.

    To finish this part, the previous arguments show that there exist constants $C_1, C_2>0$ and $C_3 > 0$ such that, for every $|\frequency\spaceStep| \leq \pi$ and $\indicesTime \geq \numberSteps$
    \begin{equation}\label{eq:modulusGreenFunctionsBound}
        |\greenFunction{\indicesTime}{2}(\frequency\spaceStep)| \leq C_1 \indicesTime + C_2, \qquad |\greenFunction{\indicesTime}{1}(\frequency\spaceStep)| \leq  C_3.
    \end{equation}

\subsubsection{Convergence}

\begin{theorem}[Convergence of \eqref{eq:FourthOrderLeapFrog}]\label{thm:convergenceTheorem}
    Let the CFL condition $|\courantNumber|<1/2$ be satisfied.
    Let $\initialDatum \in \sobolevSpaceEllTwo{\maxOrderAccuracy + 1}(\reals)$, where $\maxOrderAccuracy = 4$ is the order of the bulk scheme and $\maxOrderAccuracy_2, \maxOrderAccuracy_1 \geq 0$ are the orders of accuracy of the second and first initialization schemes.
    Start the discrete scheme using $\solutionDiscrete{0}{} = \evaluationOperator \initialDatum$, where $\evaluationOperator : \lebesgueSpace{2}(\reals) \to \ell^2(\spaceStep\relatives)$ such that $(\evaluationOperator\solution)_{\indicesSpace} = \solution(\gridPointSpace{\indicesSpace})$ for $\indicesSpace \in \relatives$. Then, there exists $C(\gridPointTime{\indicesTime}, \initialDatum, \maxOrderAccuracy, \maxOrderAccuracy_2, \maxOrderAccuracy_1) > 0$ such that, for $\spaceStep$ small enough, the leading-order error estimate for \eqref{eq:FourthOrderLeapFrog} reads:
    \begin{equation*}
        \weightedLTwoNorm{\evaluationOperator \solution(\gridPointTime{\indicesTime}) - \solutionDiscreteVector{\indicesTime}} \leq C(\gridPointTime{\indicesTime}, \initialDatum, \maxOrderAccuracy, \maxOrderAccuracy_2, \maxOrderAccuracy_1) \spaceStep^{\min (\maxOrderAccuracy, \maxOrderAccuracy_2, \maxOrderAccuracy_1 + 1)},
    \end{equation*}
    where 
    \begin{equation*}
        \leadingNormInEstimates = 
        \begin{cases}
            \maxOrderAccuracy_2 + 1, \qquad \text{if} \quad \min(\maxOrderAccuracy, \maxOrderAccuracy_2) < \min(\maxOrderAccuracy, \maxOrderAccuracy_1 ) +1, \quad \maxOrderAccuracy_2 < \maxOrderAccuracy   ,   \qquad &\textnormal{(I)},\\
            \maxOrderAccuracy   + 1, \qquad \text{if} \quad \min(\maxOrderAccuracy, \maxOrderAccuracy_2) < \min(\maxOrderAccuracy, \maxOrderAccuracy_1 ) +1, \quad \maxOrderAccuracy_2 \geq \maxOrderAccuracy,   \qquad &\textnormal{(II)},\\
            \maxOrderAccuracy_2 + 1, \qquad \text{if} \quad \min(\maxOrderAccuracy, \maxOrderAccuracy_2) = \min(\maxOrderAccuracy, \maxOrderAccuracy_1 ) +1, \quad \maxOrderAccuracy_2 < \maxOrderAccuracy,      \qquad &\textnormal{(III)},\\
            \maxOrderAccuracy   + 1, \qquad \text{if} \quad \min(\maxOrderAccuracy, \maxOrderAccuracy_2) = \min(\maxOrderAccuracy, \maxOrderAccuracy_1 ) +1, \quad \maxOrderAccuracy_2 \geq \maxOrderAccuracy,   \qquad &\textnormal{(IV)},\\
            \maxOrderAccuracy_1 + 1, \qquad \text{if} \quad \min(\maxOrderAccuracy, \maxOrderAccuracy_2) > \min(\maxOrderAccuracy, \maxOrderAccuracy_1 ) +1 , \quad \maxOrderAccuracy_1 < \maxOrderAccuracy - 1, \qquad &\textnormal{(V)},       
        \end{cases}
    \end{equation*}
    and
    \begin{equation*}
        C(\gridPointTime{\indicesTime}, \initialDatum, \maxOrderAccuracy, \maxOrderAccuracy_2, \maxOrderAccuracy_1) = 
        \begin{cases}
            (C \gridPointTime{\indicesTime} + C) |\initialDatum|_{\sobolevSpaceEllTwo{\leadingNormInEstimates}}, \qquad &\textnormal{(I)}, \\
            (C \gridPointTime{\indicesTime} + C) |\initialDatum|_{\sobolevSpaceEllTwo{\leadingNormInEstimates}}, \qquad &\textnormal{(II)}, \\
            (C (\gridPointTime{\indicesTime})^2 |\initialDatum|_{\sobolevSpaceEllTwo{\leadingNormInEstimates}}^2 + C\gridPointTime{\indicesTime} |\initialDatum|_{\sobolevSpaceEllTwo{\leadingNormInEstimates-1/2}}^2 + C |\initialDatum|_{\sobolevSpaceEllTwo{\leadingNormInEstimates-1}}^2 )^{1/2}, \qquad &\textnormal{(III)}, \\
            (C (\gridPointTime{\indicesTime})^2 |\initialDatum|_{\sobolevSpaceEllTwo{\leadingNormInEstimates}}^2 + C\gridPointTime{\indicesTime} |\initialDatum|_{\sobolevSpaceEllTwo{\leadingNormInEstimates-1/2}}^2 + C |\initialDatum|_{\sobolevSpaceEllTwo{\leadingNormInEstimates-1}}^2 )^{1/2}, \qquad &\textnormal{(IV)}, \\
            C |\initialDatum|_{\sobolevSpaceEllTwo{\leadingNormInEstimates}}, \qquad &\textnormal{(V)},
        \end{cases}
    \end{equation*}
    and $C>0$ indicates unknown constants that can change at each occurrence.
\end{theorem}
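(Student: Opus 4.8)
The plan is to carry out the whole analysis on the Fourier side. By Parseval's identity and the relation $\advectionVelocity\frequency\gridPointTime{\indicesTime} = \courantNumber\indicesTime\frequency\spaceStep$ (which comes from $\latticeVelocity = \spaceStep/\timeStep$), the error $\weightedLTwoNorm{\evaluationOperator \solution(\gridPointTime{\indicesTime}) - \solutionDiscreteVector{\indicesTime}}$ is governed by the $L^2_{\frequency}$-norm, against the weight $|\fourierTransformed{\initialDatum}(\frequency)|^2$, of the \emph{error symbol} $e^{-\imaginaryUnit\courantNumber\indicesTime\frequency\spaceStep} - \amplificationFactorFourier{\indicesTime}(\frequency\spaceStep)$ over $|\frequency\spaceStep| \leq \pi$, the aliasing caused by $\evaluationOperator$ being handled exactly as in the genuinely stable theory of \cite{strikwerda2004finite}. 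Into this symbol I would plug the Green decomposition \eqref{eq:greenDecompositionConsistency}, splitting the error into a \emph{physical} part $e^{-\imaginaryUnit\courantNumber\indicesTime\frequency\spaceStep} - \rootAmplificationPolynomialFourier{1}(\frequency\spaceStep)^{\indicesTime}$, a \emph{second-initialisation} part $-\greenFunction{\indicesTime}{2}(\frequency\spaceStep)\bigl(\amplificationFactorFourier{2}(\frequency\spaceStep) - \rootAmplificationPolynomialFourier{1}(\frequency\spaceStep)^2\bigr)$ and a \emph{first-initialisation} part $-\greenFunction{\indicesTime}{1}(\frequency\spaceStep)\bigl(\amplificationFactorFourier{1}(\frequency\spaceStep) - \rootAmplificationPolynomialFourier{1}(\frequency\spaceStep)\bigr)$. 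The ingredients are then: the order-$\maxOrderAccuracy$ consistency $|\rootAmplificationPolynomialFourier{1}(\frequency\spaceStep) - e^{-\imaginaryUnit\courantNumber\frequency\spaceStep}| \leq C|\frequency\spaceStep|^{\maxOrderAccuracy+1}$ from \eqref{eq:expansionConsistencyEigenvalueD1Q3}; the orders $\maxOrderAccuracy_2,\maxOrderAccuracy_1$ of the initialisation schemes, which give $|\amplificationFactorFourier{2}(\frequency\spaceStep) - \rootAmplificationPolynomialFourier{1}(\frequency\spaceStep)^2| \leq C|\frequency\spaceStep|^{\min(\maxOrderAccuracy,\maxOrderAccuracy_2)+1}$ and $|\amplificationFactorFourier{1}(\frequency\spaceStep) - \rootAmplificationPolynomialFourier{1}(\frequency\spaceStep)| \leq C|\frequency\spaceStep|^{\min(\maxOrderAccuracy,\maxOrderAccuracy_1)+1}$ uniformly on $|\frequency\spaceStep|\leq\pi$; and the Green-function bounds \eqref{eq:modulusGreenFunctionsBound}, namely $|\greenFunction{\indicesTime}{2}|\leq C_1\indicesTime + C_2$ and $|\greenFunction{\indicesTime}{1}|\leq C_3$.

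Second, I would bound the three parts pointwise in $\frequency$. For the physical part, the telescoping identity $e^{-\imaginaryUnit\courantNumber\indicesTime\frequency\spaceStep} - \rootAmplificationPolynomialFourier{1}^{\indicesTime} = \sum_{j=0}^{\indicesTime-1}e^{-\imaginaryUnit\courantNumber(\indicesTime-1-j)\frequency\spaceStep}\rootAmplificationPolynomialFourier{1}^{j}\bigl(e^{-\imaginaryUnit\courantNumber\frequency\spaceStep} - \rootAmplificationPolynomialFourier{1}\bigr)$, together with $|\rootAmplificationPolynomialFourier{1}(\frequency\spaceStep)| = 1$ (all roots sit on the unit circle, by \Cref{prop:StabilityThirdLeapFrog} and the self-inversivity discussion), yields a bound $\leq \indicesTime\, C|\frequency\spaceStep|^{\maxOrderAccuracy+1} = C\gridPointTime{\indicesTime}\latticeVelocity\,|\frequency|^{\maxOrderAccuracy+1}\spaceStep^{\maxOrderAccuracy}$ after substituting $\indicesTime\spaceStep = \gridPointTime{\indicesTime}\latticeVelocity$. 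For the second-initialisation part I combine $|\greenFunction{\indicesTime}{2}| \leq C_1\indicesTime + C_2$ with the pointwise estimate above and the same substitution, getting a contribution $\lesssim (\gridPointTime{\indicesTime}|\frequency| + \spaceStep|\frequency|)\,|\frequency|^{\min(\maxOrderAccuracy,\maxOrderAccuracy_2)}\spaceStep^{\min(\maxOrderAccuracy,\maxOrderAccuracy_2)}$ — i.e. one power of $\spaceStep$ worse than the naive $\maxOrderAccuracy_2+1$, which is exactly the loss caused by the secular growth of $\greenFunction{\indicesTime}{2}$ that \Cref{prop:GrowthCompanion} predicts, and the heart of the matter. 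For the first-initialisation part $|\greenFunction{\indicesTime}{1}|\leq C_3$ is uniform in $\indicesTime$, so the contribution is merely $\lesssim |\frequency|^{\min(\maxOrderAccuracy,\maxOrderAccuracy_1)+1}\spaceStep^{\min(\maxOrderAccuracy,\maxOrderAccuracy_1)+1}$, with \emph{no} growth in time.

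Third, I would square the decomposed symbol, integrate against $|\fourierTransformed{\initialDatum}|^2$ over $|\frequency\spaceStep|\leq\pi$, and identify $\int|\frequency|^{2s}|\fourierTransformed{\initialDatum}|^2\,\mathrm{d}\frequency = |\initialDatum|_{\sobolevSpaceEllTwo{s}}^2$; the emerging power of $\spaceStep$ is $\min\bigl(\maxOrderAccuracy,\min(\maxOrderAccuracy,\maxOrderAccuracy_2),\min(\maxOrderAccuracy,\maxOrderAccuracy_1)+1\bigr) = \min(\maxOrderAccuracy,\maxOrderAccuracy_2,\maxOrderAccuracy_1+1)$, and (for $\spaceStep$ small) the genuinely lower-order-in-$\spaceStep$ remainders are absorbed. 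The five cases are then the bookkeeping of which part achieves the dominant order: when a single part is strictly dominant (cases (I), (II), (V)) the constant is read off directly, carrying a $\gridPointTime{\indicesTime}$ factor iff that part is the physical or the second-initialisation one; when the second- and first-initialisation parts share the dominant order (cases (III), (IV), i.e. $\min(\maxOrderAccuracy,\maxOrderAccuracy_2) = \min(\maxOrderAccuracy,\maxOrderAccuracy_1)+1$) one must not collapse the sum by the triangle inequality but expand $\|X+Y\|^2 = \|X\|^2 + 2\,\mathrm{Re}\langle X,Y\rangle + \|Y\|^2$: the secular piece $X$ (from $\greenFunction{\indicesTime}{2}$) gives $\gridPointTime{\indicesTime}^2|\initialDatum|_{\sobolevSpaceEllTwo{\leadingNormInEstimates}}^2$, the bounded piece $Y$ (from $\greenFunction{\indicesTime}{1}$) gives the derivative-cheaper $|\initialDatum|_{\sobolevSpaceEllTwo{\leadingNormInEstimates-1}}^2$, and the cross term — bounded by the pointwise product of moduli — gives $\gridPointTime{\indicesTime}|\initialDatum|_{\sobolevSpaceEllTwo{\leadingNormInEstimates-1/2}}^2$, whence the stated mixed form under a square root.

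I expect the main obstacle to be precisely this last bookkeeping: keeping the cross term rather than using $\|X+Y\|\leq\|X\|+\|Y\|$, and tracking which Sobolev index each piece genuinely needs — this is where the half-integer norm $\sobolevSpaceEllTwo{\leadingNormInEstimates-1/2}$ is produced. Subordinate technical points are: (i) the uniform-in-$\frequency\spaceStep$ validity of the pointwise symbol estimates right down to $\frequency\spaceStep = 0$, where the root multiplicity jumps by \Cref{prop:StabilityThirdLeapFrog} — these follow from the $\frequency\spaceStep\neq 0$ formul\ae{} and a limiting argument, as in the smoothness remark; (ii) the aliasing from $\evaluationOperator$, treated classically; and (iii) checking that $\initialDatum \in \sobolevSpaceEllTwo{\maxOrderAccuracy+1}$ controls every seminorm that appears, which holds since $\leadingNormInEstimates \leq \maxOrderAccuracy+1$ throughout.
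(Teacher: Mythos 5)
Your proposal is correct and follows essentially the same route as the paper: split off the aliasing/truncation terms \`a la Strikwerda (using the stability bound \eqref{eq:stabilityAmplification} for the initial-data part), bound the error symbol $e^{-\imaginaryUnit\courantNumber\indicesTime\frequency\spaceStep}-\amplificationFactorFourier{\indicesTime}$ via the Green decomposition \eqref{eq:greenDecompositionConsistency} together with the consistency orders and the Green-function bounds \eqref{eq:modulusGreenFunctionsBound}, then square, integrate against $|\fourierTransformed{\initialDatum}|^2$ and do the case bookkeeping. Your treatment of the mixed cases (III)--(IV), keeping the cross term so that the $\sobolevSpaceEllTwo{\leadingNormInEstimates-1/2}$ seminorm appears, is exactly how the paper obtains its stated constants.
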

\begin{remark}[On \Cref{thm:convergenceTheorem}]
    We remark the following facts:
    \begin{itemize}
        \item The assumption $\initialDatum \in \sobolevSpaceEllTwo{5}(\reals)$ may be sub-optimal. Especially when the overall order is low due to bad initialization schemes, this order can be observed for initial data which are less than $\sobolevSpaceEllTwo{5}$, \confer{} \Cref{sec:testNonSmoothData}.
        \item The error constants generally trend like $C(\gridPointTime{\indicesTime}, \dots) \sim \gridPointTime{\indicesTime}$ except for (V).
        When they depend on $\gridPointTime{\indicesTime}$, this means that the error accumulates at most linearly in time.
        For (V), the leading-order contribution comes into play at the beginning of the process due to the initialization schemes and cannot be reduced.
    \end{itemize}
\end{remark}
\begin{proof}[Proof of \Cref{thm:convergenceTheorem}]
    The proof is extremely similar to the ones given in \cite[Chapter 10]{strikwerda2004finite}: we provide it for the interested reader.
    Knowing that $\initialDatum \in \sobolevSpaceEllTwo{5}(\reals)$, the initial function and the exact solution are continuous and their point-wise values are well defined at any time.
    Using the triangle inequality
    \begin{equation*}
        \weightedLTwoNorm{\evaluationOperator \solution(\gridPointTime{\indicesTime}) - \solutionDiscreteVector{\indicesTime}} \leq \weightedLTwoNorm{\evaluationOperator \solution(\gridPointTime{\indicesTime}) - \truncationOperator \solution(\gridPointTime{\indicesTime})} + \weightedLTwoNorm{\truncationOperator \solution(\gridPointTime{\indicesTime}) - \solutionDiscreteTruncatedVector{\indicesTime}} + \weightedLTwoNorm{\solutionDiscreteTruncatedVector{\indicesTime} - \solutionDiscreteVector{\indicesTime}},
    \end{equation*}
    where the solution $\solutionDiscreteTruncatedVector{\indicesTime}$ is obtained by applying the multi-step scheme and its initializations on the initial datum $\solutionDiscreteTruncated{0}{\indicesSpace} = (\truncationOperator\initialDatum)(\gridPointSpace{\indicesSpace})$.
    Here, the truncation operator $\truncationOperator : \lebesgueSpace{2}(\reals) \to \ell^2(\spaceStep\relatives)$ is given by $\fourierTransformedLarge{\truncationOperator \solution}(\frequency) = \fourierTransformed { \solution} (\frequency) \indicatorFunction{[0, \pi/\spaceStep]}(|\frequency|)$.
    The interpolation operator $\interpolationOperator :  \ell^2(\spaceStep\relatives) \to \lebesgueSpace{2}(\reals) $ is given by $\fourierTransformedLarge{\interpolationOperator \discrete{\solution}}(\frequency) = \fourierTransformed {\discrete{\solution}} (\frequency) \indicatorFunction{[0, \pi/\spaceStep]}(|\frequency|)$.
    For the first term, by \cite[Theorem 10.1.3]{strikwerda2004finite}
    \begin{equation}\label{eq:firstTerm}
        \weightedLTwoNorm{\evaluationOperator \solution(\gridPointTime{\indicesTime}) - \truncationOperator \solution(\gridPointTime{\indicesTime})} \leq C\spaceStep^{5} |\initialDatum|_{\sobolevSpaceEllTwo{5}}.
    \end{equation}
    For the last term, we have 
    \begin{multline}\label{eq:lastTerm}
        \weightedLTwoNorm{\solutionDiscreteTruncatedVector{\indicesTime} - \solutionDiscreteVector{\indicesTime}}^2 = \int_{|\frequency\spaceStep| \leq \pi} |\amplificationFactorFourier{\indicesTime}(\frequency\spaceStep) (\fourierTransformedLarge{\truncationOperator\initialDatum}(\frequency) - \fourierTransformedLarge{\evaluationOperator\initialDatum}(\frequency))|^2 \differential{\frequency} \leq \constantStabilityGeneralized^2 \int_{|\frequency\spaceStep| \leq \pi} |\fourierTransformedLarge{\truncationOperator\initialDatum}(\frequency) - \fourierTransformedLarge{\evaluationOperator\initialDatum}(\frequency)|^2 \differential{\frequency} \\
        \leq \constantStabilityGeneralized^2 C \spaceStep^{5} |\initialDatum|_{\sobolevSpaceEllTwo{5}}^2,
    \end{multline}
    where the stability \eqref{eq:stabilityAmplification} and \cite[Theorem 10.1.3]{strikwerda2004finite} have been used.
    For the central term:
    \begin{multline*}
        \weightedLTwoNorm{\truncationOperator \solution(\gridPointTime{\indicesTime}) - \solutionDiscreteTruncatedVector{\indicesTime}}^2 = \int_{|\frequency\spaceStep| \leq \pi} |\fourierTransformed{\solution}(\gridPointTime{\indicesTime}, \frequency) - \solutionDiscreteTruncatedFourier{\indicesTime}(\frequency)|^2 \differential{\frequency} = \int_{\reals} |\fourierTransformed{\solution}(\gridPointTime{\indicesTime}, \frequency) - \fourierTransformedLarge{\interpolationOperator \solutionDiscreteTruncated{\indicesTime}{}}(\frequency)|^2 \differential{\frequency} - \int_{|\frequency\spaceStep| > \pi} |\fourierTransformed{\solution}(\gridPointTime{\indicesTime}, \frequency)|^2\differential{\frequency} \\
        \leq \lVert \solution(\gridPointTime{\indicesTime}, \cdot) - \interpolationOperator \solutionDiscreteTruncated{\indicesTime}{}\rVert_{\lebesgueSpace{2}(\reals)}^2 = \int_{|\frequency\spaceStep| \leq \pi} |e^{-i\indicesTime\courantNumber\frequency\spaceStep} - \amplificationFactorFourier{\indicesTime}(\frequency\spaceStep)|^2 |{\initialDatumFourier}(\frequency)|^2 \differential{\frequency} + \int_{|\frequency\spaceStep| > \pi} |e^{-i\indicesTime\courantNumber\frequency\spaceStep} |^2 |{\initialDatumFourier}(\frequency)|^2 \differential{\frequency},
    \end{multline*}
    thanks to the Parseval's identity.
    We have 
    \begin{multline}\label{eq:totalErrorInequality}
        |e^{-\imaginaryUnit \indicesTime\courantNumber\frequency\spaceStep} - \amplificationFactorFourier{\indicesTime}| \leq |e^{-\imaginaryUnit \indicesTime\courantNumber\frequency\spaceStep} - \rootAmplificationPolynomialFourier{1}^{\indicesTime}| + |\greenFunction{\indicesTime}{2}| |\amplificationFactorFourier{2} - \rootAmplificationPolynomialFourier{1}^2|  + |\greenFunction{\indicesTime}{1}| |\amplificationFactorFourier{1} - \rootAmplificationPolynomialFourier{1}| \\
        \leq \indicesTime |e^{-\imaginaryUnit \courantNumber\frequency\spaceStep} - \rootAmplificationPolynomialFourier{1}| + (C_1 \indicesTime + C_2) |\amplificationFactorFourier{2} - \rootAmplificationPolynomialFourier{1}^2| + C_3  |\amplificationFactorFourier{1} - \rootAmplificationPolynomialFourier{1}|\\
        \leq C_4 \indicesTime |\frequency\spaceStep|^{\maxOrderAccuracy + 1} + C_5 (C_1 \indicesTime + C_2) |\frequency\spaceStep|^{\min(\maxOrderAccuracy, \maxOrderAccuracy_2) + 1} + C_3 C_6  |\frequency\spaceStep|^{\min(\maxOrderAccuracy, \maxOrderAccuracy_1) + 1} \\
        = \underbrace{\tfrac{C_5}{\latticeVelocity} \gridPointTime{\indicesTime} \spaceStep^{\maxOrderAccuracy} |\frequency|^{\maxOrderAccuracy + 1}}_{\textnormal{bulk scheme}} + \underbrace{C_6 (C_2 \tfrac{\gridPointTime{\indicesTime}}{\latticeVelocity} \spaceStep^{\min(\maxOrderAccuracy, \maxOrderAccuracy_2)} + C_3 \spaceStep^{\min(\maxOrderAccuracy, \maxOrderAccuracy_2) + 1}) |\frequency|^{\min(\maxOrderAccuracy, \maxOrderAccuracy_2) + 1}}_{\textnormal{2nd initialization scheme}} + \underbrace{C_4 C_7 \spaceStep^{\min(\maxOrderAccuracy, \maxOrderAccuracy_1) + 1} |\frequency|^{\min(\maxOrderAccuracy, \maxOrderAccuracy_1) + 1}}_{\textnormal{1st initialization scheme}}.
    \end{multline}
    The first inequality comes from the triangle inequality applied using \eqref{eq:greenDecompositionConsistency}. The second one uses \cite[Equation (10.1.7)]{strikwerda2004finite} and the found dependence of the Green functions in $\indicesTime$, see \eqref{eq:modulusGreenFunctionsBound}.
    The third inequality comes from the order of the schemes, \confer{} \cite{coulombel2020neumann}.
    When taking the square of the previous inequality back into the integral, all the stemming Sobolev semi-norms exist thanks to the smoothness assumption on the initial datum.
    It is therefore time to let $\spaceStep \to 0$ and identify which term is the leading order term. This can be done directly on the global truncation error term $|e^{-\imaginaryUnit \indicesTime\courantNumber\frequency\spaceStep} - \amplificationFactorFourier{\indicesTime}|$.
    We distinguish all the cases 
    \begin{itemize}
        \item $\min(\maxOrderAccuracy, \maxOrderAccuracy_2) < \min(\maxOrderAccuracy, \maxOrderAccuracy_1) + 1$: the second initialization scheme is the limiting one. 
        \begin{itemize}
            \item[(I)] $\min(\maxOrderAccuracy, \maxOrderAccuracy_2) < \maxOrderAccuracy$, equivalently $\maxOrderAccuracy_2 < \maxOrderAccuracy$. In this case, which covers $\testCaseId{1, 1}$, the leading order term in terms of $\spaceStep$ will be 
            \begin{equation*}
                |e^{-\imaginaryUnit \indicesTime\courantNumber\frequency\spaceStep} - \amplificationFactorFourier{\indicesTime}| \leq C_2 C_6  \tfrac{\gridPointTime{\indicesTime}}{\latticeVelocity} \spaceStep^{\min(\maxOrderAccuracy, \maxOrderAccuracy_2)} |\frequency|^{\min(\maxOrderAccuracy, \maxOrderAccuracy_2) + 1} = C_2 C_6  \tfrac{\gridPointTime{\indicesTime}}{\latticeVelocity} \spaceStep^{\maxOrderAccuracy_2} |\frequency|^{\maxOrderAccuracy_2 + 1}.
            \end{equation*}
            \item[(II)] $\min(\maxOrderAccuracy, \maxOrderAccuracy_2) = \maxOrderAccuracy$, equivalently $\maxOrderAccuracy_2 \geq \maxOrderAccuracy$, covering the case $\testCaseId{4, 4}$:
            \begin{equation*}
                |e^{-\imaginaryUnit \indicesTime\courantNumber\frequency\spaceStep} - \amplificationFactorFourier{\indicesTime}| \leq 
                \tfrac{C_5 + C_2C_6 }{\latticeVelocity} \gridPointTime{\indicesTime} \spaceStep^{\maxOrderAccuracy} |\frequency|^{\maxOrderAccuracy + 1}.
            \end{equation*}
        \end{itemize}
        \item $\min(\maxOrderAccuracy, \maxOrderAccuracy_2) = \min(\maxOrderAccuracy, \maxOrderAccuracy_1 ) +1$: both initialization scheme contribute equally. 
        \begin{itemize}
            \item[(III)] $\min(\maxOrderAccuracy, \maxOrderAccuracy_2) = \min(\maxOrderAccuracy, \maxOrderAccuracy_1)+1 < \maxOrderAccuracy$, \idEst{} $\maxOrderAccuracy_2 < \maxOrderAccuracy$ and $\maxOrderAccuracy_1 < \maxOrderAccuracy - 1$ (this latter condition is redundant), which covers $\testCaseId{1, 2}$. We have
            \begin{align*}
                |e^{-\imaginaryUnit \indicesTime\courantNumber\frequency\spaceStep} - \amplificationFactorFourier{\indicesTime}| &\leq 
               (C_2 C_6 \tfrac{\gridPointTime{\indicesTime}}{\latticeVelocity}  |\frequency|^{\min(\maxOrderAccuracy, \maxOrderAccuracy_2) + 1} + C_4 C_7 |\frequency|^{\min(\maxOrderAccuracy, \maxOrderAccuracy_2)})\spaceStep^{\min(\maxOrderAccuracy, \maxOrderAccuracy_2)} \\
               &= (C_2 C_6 \tfrac{\gridPointTime{\indicesTime}}{\latticeVelocity}  |\frequency|^{\maxOrderAccuracy_2 + 1} + C_4 C_7 |\frequency|^{\maxOrderAccuracy_2})\spaceStep^{\maxOrderAccuracy_2}.
            \end{align*}
            \item[(IV)] $\min(\maxOrderAccuracy, \maxOrderAccuracy_2) = \min(\maxOrderAccuracy, \maxOrderAccuracy_1)+1 = \maxOrderAccuracy$, \idEst{} $\maxOrderAccuracy_2 \geq \maxOrderAccuracy$ and $\maxOrderAccuracy_1 \geq \maxOrderAccuracy - 1$ (this latter condition is redundant), which covers $\testCaseId{3, 4}$. We have 
            \begin{equation*}
                |e^{-\imaginaryUnit \indicesTime\courantNumber\frequency\spaceStep} - \amplificationFactorFourier{\indicesTime}| \leq (\tfrac{C_5 + C_2 C_6}{\latticeVelocity} \gridPointTime{\indicesTime}  |\frequency|^{\maxOrderAccuracy + 1}  + C_4 C_7 |\frequency|^{\maxOrderAccuracy} )\spaceStep^{\maxOrderAccuracy}.
            \end{equation*}
        \end{itemize}
        \item $\min(\maxOrderAccuracy, \maxOrderAccuracy_2) > \min(\maxOrderAccuracy, \maxOrderAccuracy_1 ) +1$: the first initialization scheme is the limiting one.
        The only possible case is $\min(\maxOrderAccuracy, \maxOrderAccuracy_1) + 1 < \maxOrderAccuracy$, equivalently $\maxOrderAccuracy_1 < \maxOrderAccuracy - 1$, indicated by (V). In this case, which covers $\testCaseId{1, 3}$:
        \begin{equation*}
            |e^{-\imaginaryUnit \indicesTime\courantNumber\frequency\spaceStep} - \amplificationFactorFourier{\indicesTime}| \leq C_4 C_7 \spaceStep^{\min(\maxOrderAccuracy,\maxOrderAccuracy_1) + 1} |\frequency|^{\min(\maxOrderAccuracy, \maxOrderAccuracy_1) + 1} = C_4 C_7 \spaceStep^{\maxOrderAccuracy_1 + 1} |\frequency|^{\maxOrderAccuracy_1 + 1}.
        \end{equation*}
    \end{itemize}
    This shows that the order in $\spaceStep$ is given by $\spaceStep^{\min(\maxOrderAccuracy, \maxOrderAccuracy_2, \maxOrderAccuracy_1 + 1)}$.
    We introduce 
    \begin{equation*}
        \leadingNormInEstimates = 
        \begin{cases}
            \maxOrderAccuracy_2 + 1, \qquad \text{if} \quad \min(\maxOrderAccuracy, \maxOrderAccuracy_2) < \min(\maxOrderAccuracy, \maxOrderAccuracy_1 ) +1, \quad \maxOrderAccuracy_2 < \maxOrderAccuracy   ,   \qquad &\textnormal{(I)}, \\
            \maxOrderAccuracy   + 1, \qquad \text{if} \quad \min(\maxOrderAccuracy, \maxOrderAccuracy_2) < \min(\maxOrderAccuracy, \maxOrderAccuracy_1 ) +1, \quad \maxOrderAccuracy_2 \geq \maxOrderAccuracy,   \qquad &\textnormal{(II)}, \\
            \maxOrderAccuracy_2 + 1, \qquad \text{if} \quad \min(\maxOrderAccuracy, \maxOrderAccuracy_2) = \min(\maxOrderAccuracy, \maxOrderAccuracy_1 ) +1, \quad \maxOrderAccuracy_2 < \maxOrderAccuracy,      \qquad &\textnormal{(III)}, \\
            \maxOrderAccuracy   + 1, \qquad \text{if} \quad \min(\maxOrderAccuracy, \maxOrderAccuracy_2) = \min(\maxOrderAccuracy, \maxOrderAccuracy_1 ) +1, \quad \maxOrderAccuracy_2 \geq \maxOrderAccuracy,   \qquad &\textnormal{(IV)}, \\
            \maxOrderAccuracy_1 + 1, \qquad \text{if} \quad \min(\maxOrderAccuracy, \maxOrderAccuracy_2) > \min(\maxOrderAccuracy, \maxOrderAccuracy_1 ) +1 , \quad \maxOrderAccuracy_1 < \maxOrderAccuracy - 1, \qquad &\textnormal{(V)}.   
        \end{cases}
    \end{equation*}
    Observe that $\leadingNormInEstimates \geq \min(\maxOrderAccuracy, \maxOrderAccuracy_2, \maxOrderAccuracy_1 + 1)$.
    By \cite[Equation (10.1.10)]{strikwerda2004finite}, we have that $\int_{|\frequency\spaceStep| > \pi} |e^{-i\indicesTime\courantNumber\frequency\spaceStep} |^2 |{\initialDatumFourier}(\frequency)|^2 \differential{\frequency} \leq C \spaceStep^{5} |\initialDatum|_{\sobolevSpaceEllTwo{5}}^2$.
    We gain
    \begin{equation*}
        \weightedLTwoNorm{\truncationOperator \solution(\gridPointTime{\indicesTime}) - \solutionDiscreteTruncatedVector{\indicesTime}} \leq C(\gridPointTime{\indicesTime}, \initialDatum, \maxOrderAccuracy, \maxOrderAccuracy_2, \maxOrderAccuracy_1) \spaceStep^{\min(\maxOrderAccuracy, \maxOrderAccuracy_2, \maxOrderAccuracy_1 + 1)},
    \end{equation*}
    where, by indicating all the constants by $C>0$ (each one is different):
    \begin{equation*}
        C(\gridPointTime{\indicesTime}, \initialDatum, \maxOrderAccuracy, \maxOrderAccuracy_2, \maxOrderAccuracy_1) = 
        \begin{cases}
            C \gridPointTime{\indicesTime} |\initialDatum|_{\sobolevSpaceEllTwo{\leadingNormInEstimates}}, \qquad &\textnormal{(I)}, \\
            C \gridPointTime{\indicesTime} |\initialDatum|_{\sobolevSpaceEllTwo{\leadingNormInEstimates}}, \qquad &\textnormal{(II)}, \\
            (C (\gridPointTime{\indicesTime})^2 |\initialDatum|_{\sobolevSpaceEllTwo{\leadingNormInEstimates}}^2 + C\gridPointTime{\indicesTime} |\initialDatum|_{\sobolevSpaceEllTwo{\leadingNormInEstimates-1/2}}^2 + C |\initialDatum|_{\sobolevSpaceEllTwo{\leadingNormInEstimates-1}}^2 )^{1/2}, \qquad &\textnormal{(III)}, \\
            (C (\gridPointTime{\indicesTime})^2 |\initialDatum|_{\sobolevSpaceEllTwo{\leadingNormInEstimates}}^2 + C\gridPointTime{\indicesTime} |\initialDatum|_{\sobolevSpaceEllTwo{\leadingNormInEstimates-1/2}}^2 + C |\initialDatum|_{\sobolevSpaceEllTwo{\leadingNormInEstimates-1}}^2 )^{1/2}, \qquad &\textnormal{(IV)}, \\
            C |\initialDatum|_{\sobolevSpaceEllTwo{\leadingNormInEstimates}}, \qquad &\textnormal{(V)}.
        \end{cases}
    \end{equation*}
    The overall claim comes adding \eqref{eq:firstTerm} and \eqref{eq:lastTerm}, which are nevertheless negligible.
\end{proof}

\subsection{Error behavior in time}

The behavior of the error of actual numerical computations as time goes on can be studied using the modal decomposition \eqref{eq:generalModalDecomposition}: for every $|\frequency\spaceStep| \leq \pi$, we have that 
    \begin{equation}\label{eq:modalExpansion}
        \amplificationFactorFourier{\indicesTime}(\frequency\spaceStep) =
        \begin{cases}
            \coefficientModalDecomposition{1}(\frequency\spaceStep) \rootAmplificationPolynomialFourier{1}(\frequency\spaceStep)^{\indicesTime} + \coefficientModalDecomposition{2}(\frequency\spaceStep) \rootAmplificationPolynomialFourier{2}(\frequency\spaceStep)^{\indicesTime} + \coefficientModalDecomposition{3}(\frequency\spaceStep) \rootAmplificationPolynomialFourier{3}(\frequency\spaceStep)^{\indicesTime}, \qquad &|\frequency\spaceStep| \neq 0, \\
            \coefficientModalDecompositionDegenerate{1} + \coefficientModalDecompositionDegenerate{2}(-1)^{\indicesTime} + \coefficientModalDecompositionDegenerate{3}(-1)^{\indicesTime}\indicesTime , \qquad &|\frequency\spaceStep| = 0.
        \end{cases}
    \end{equation}
    Here, the coefficients $\coefficientModalDecomposition{1}, \coefficientModalDecomposition{2}, \coefficientModalDecomposition{3}$, and $\coefficientModalDecompositionDegenerate{1}$, $\coefficientModalDecompositionDegenerate{2}$, $\coefficientModalDecompositionDegenerate{3} \in \reals$ are determined using the initialization schemes $\amplificationFactorFourier{2}$ and $\amplificationFactorFourier{1}$. For the initialization schemes that we have considered, $\coefficientModalDecompositionDegenerate{1} = 1$ and $\coefficientModalDecompositionDegenerate{2} = \coefficientModalDecompositionDegenerate{3} = 0$, \confer{} \eqref{eq:stabilityAmplification}.
    We remind that, contrarily to what this explicit formula \eqref{eq:modalExpansion} suggests, $\amplificationFactorFourier{\indicesTime}(\frequency\spaceStep)$ is a smooth function for every $|\frequency\spaceStep| \leq \pi$.
    In order to find its Taylor expansion for $|\frequency\spaceStep| \ll 1$, we can rely on its explicit representation for $|\frequency\spaceStep| \neq 0$. 
    This gives the system, for $|\frequency\spaceStep| \neq 0$: $\vandermondeMatrix(\frequency\spaceStep) \transpose{(\coefficientModalDecomposition{1}(\frequency\spaceStep), \coefficientModalDecomposition{2}(\frequency\spaceStep), \coefficientModalDecomposition{3}(\frequency\spaceStep))} = \transpose{(\amplificationFactorFourier{2}(\frequency\spaceStep), \amplificationFactorFourier{1}(\frequency\spaceStep), 1)}$.
    We can inverse the Vandermonde matrix to give  $\coefficientModalDecomposition{1}$, $\coefficientModalDecomposition{2}$, and $\coefficientModalDecomposition{3}$.
    Let us study some given choice of initialization scheme.

    \begin{figure}[h]
        \begin{center}
            \begin{footnotesize}
                \definecolor{colorOne}{RGB}{88, 7, 115}
                \definecolor{colorTwo}{RGB}{225, 99, 14}
                \definecolor{colorThree}{RGB}{217, 13, 67}
                \def\svgwidth{0.8\textwidth}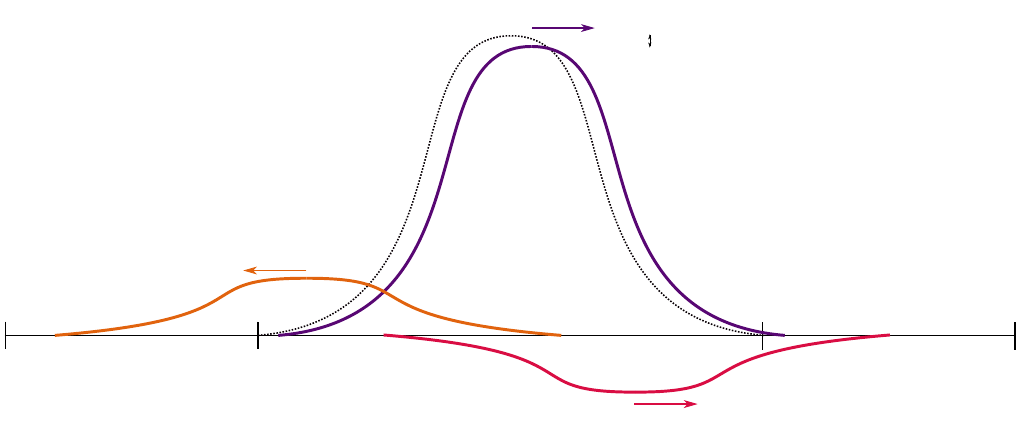
            \end{footnotesize}
        \end{center}\caption{\label{fig:packets}Packets propagated by different modes for the case $\testCaseId{1, 2}$. For the ones carried by $\rootAmplificationPolynomialFourier{2}$ and $\rootAmplificationPolynomialFourier{3}$, we draw an envelope-like shape to highlight the rapidly oscillating nature between odd and even time-steps.}
    \end{figure}

\begin{itemize}
    \item Initialization $\testCaseId{1, 2}$. We obtain, in the limit $|\frequency\spaceStep| \ll 1$, $\coefficientModalDecomposition{1}(\frequency\spaceStep) = 1 + \bigO{|\frequency\spaceStep|^2}$, $\coefficientModalDecomposition{2}(\frequency\spaceStep) = \bigO{|\frequency\spaceStep|}$, and $\coefficientModalDecomposition{3}(\frequency\spaceStep) = \bigO{|\frequency\spaceStep|}$.
    This results in the decomposition of the discrete solution---see also \Cref{fig:packets}---as
    \begin{multline*}
        \amplificationFactorFourier{\indicesTime}(\frequency\spaceStep) = (1 + \bigO{|\frequency\spaceStep|^2}) e^{-\imaginaryUnit \advectionVelocity\gridPointTime{\indicesTime}\frequency(1+\bigO{|\frequency\spaceStep|^4})} \\
        + \Bigl ( \frac{\imaginaryUnit\sqrt{3}(1-\courantNumber^2)}{2\sqrt{8-5\courantNumber^2}} \frequency\spaceStep  + \bigO{|\frequency\spaceStep|^2}\Bigr ) (-1)^{\indicesTime}  e^{\tfrac{\imaginaryUnit\sqrt{3} \gridPointTime{\indicesTime}}{6} (\sqrt{3}\advectionVelocity + \sqrt{8\latticeVelocity^2 - 5\advectionVelocity^2})\frequency (1+\bigO{|\frequency\spaceStep|^2})} \\
        + \Bigl ( -\frac{\imaginaryUnit\sqrt{3}(1-\courantNumber^2)}{2\sqrt{8-5\courantNumber^2}} \frequency\spaceStep  + \bigO{|\frequency\spaceStep|^2}\Bigr ) (-1)^{\indicesTime}  e^{\tfrac{\imaginaryUnit\sqrt{3} \gridPointTime{\indicesTime}}{6} (\sqrt{3}\advectionVelocity - \sqrt{8\latticeVelocity^2 - 5\advectionVelocity^2})\frequency (1+\bigO{|\frequency\spaceStep|^2})}.
    \end{multline*}
    The physical mode---which is accurate at order four---is present with a distortion of order two, see first row. 
    What lowers the overall order to one are the rapidly oscillating spurious modes which have amplitude $\bigO{|\frequency\spaceStep|}$ (second and third rows).

    \begin{figure}[h]
        \begin{center}
            \includegraphics[scale = 0.99]{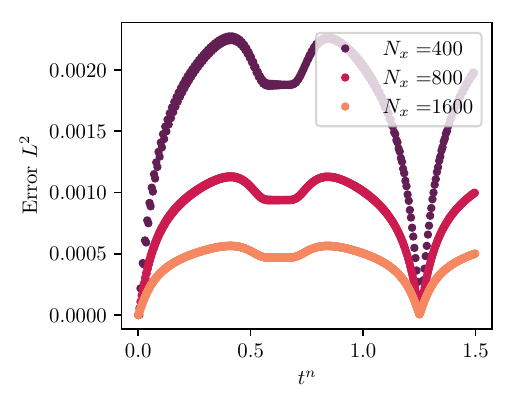}
            \includegraphics[scale = 0.99]{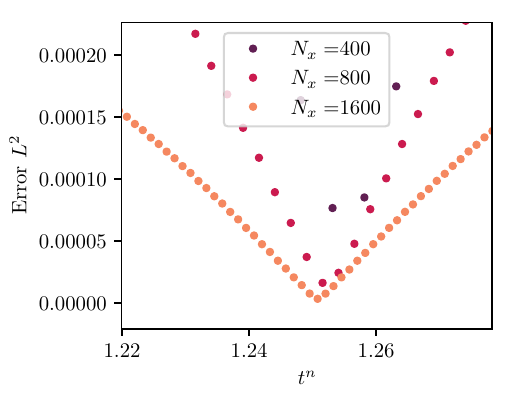}
        \end{center}\caption{\label{fig:21_error_time}$L^2$ error in time for $\testCaseId{1, 2}$ using $\numberSpacePoint$ grid points. A zoom around $1.25$ is proposed on the right.}
    \end{figure}

    \begin{figure}[h]
        \begin{center}
            \includegraphics{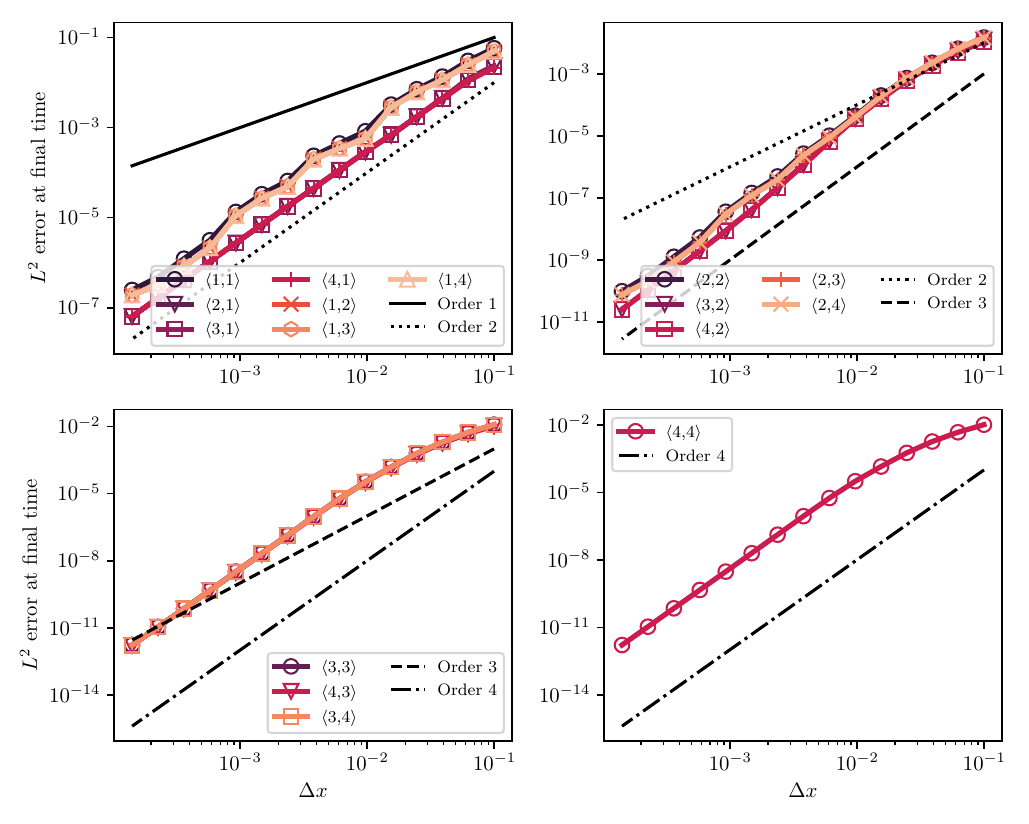}
        \end{center}\caption{\label{fig:convergenceTf125}Error for \eqref{eq:FourthOrderLeapFrog} at final time $\finalTime = \tfrac{2}{|\velocityMode{2}| + |\velocityMode{3}|} \approx 1.25$ with different initialization schemes.}
    \end{figure}

    Looking at the error in time, see \Cref{fig:21_error_time}, we see that for fixed times, the error is proportional to $\spaceStep$. However, for a specific time around $\gridPointTime{\indicesTime} = 1.25$, we see that the error seems to be practically zero. Looking at the magnification in this area, we see that here convergence seems quadratic.
    We explain this spectacular decrease of the error---and enhanced convergence rate---by a destructive interference between the mode brought by $\rootAmplificationPolynomialFourier{2}$ and the one by $\rootAmplificationPolynomialFourier{3}$, which are those carrying the $\bigO{\spaceStep}$ part of the error, see \Cref{fig:packets}.
    At $\courantNumber=1/4$, the dimensionless velocities for each mode are 
    \begin{align*}
        \rootAmplificationPolynomialFourier{1} \quad \leftrightarrow \quad \velocityMode{1} = \courantNumber = 0.25, \qquad &\rootAmplificationPolynomialFourier{2} \quad \leftrightarrow \quad \velocityMode{2} = -\tfrac{\sqrt{3}}{6} (\sqrt{3}\courantNumber + \sqrt{8 - 5\courantNumber^2}) \approx-0.93, \\
        &\rootAmplificationPolynomialFourier{3} \quad \leftrightarrow \quad \velocityMode{3} = -\tfrac{\sqrt{3}}{6} (\sqrt{3}\courantNumber - \sqrt{8 - 5\courantNumber^2}) \approx 0.68. 
    \end{align*}
    The initial envelope has support $\support{\initialDatum} = [-1/2, 1/2]$. We consider that the maximal interference between this envelope transported by the mode $\rootAmplificationPolynomialFourier{2}$ and the one transported by $\rootAmplificationPolynomialFourier{3}$ takes place when the peaks, located at $\spaceVariable = 0$ at $\timeVariable = 0$, meet again due to the periodic boundary conditions. In this case, the two symmetric packets coincide.
    The first peak to reach the boundary ($\spaceVariable = -1$) is the one transported by $\rootAmplificationPolynomialFourier{2}$, since $|\velocityMode{2}| > |\velocityMode{3}|$. 
    The peaks met again at time
    \begin{equation*}
         \frac{2}{|\velocityMode{2}| + |\velocityMode{3}|} \approx 1.25,
    \end{equation*}
    which is---unsurprisingly---the one where the error had its minimum in time.
    If we repeat the convergence test for different initializations selecting precisely $\finalTime = 1.25$, we obtain the result in \Cref{fig:convergenceTf125}.
    Now we observe $\textnormal{overall order} = \min (\maximumOrder, \maximumOrder_2 + 1, \maximumOrder_1 + 1)$, as in the genuinely stable framework, thanks to the fact that the parasitic modes, carrying the dominant part of the error, cancel out thanks to periodicity. 
    When the packets are located at the same place thanks to periodicity, the sum of the modes yields a term $\bigO{|\frequency\spaceStep|^2} \solutionDiscreteFourier{0}(\frequency)$, which is second-order in $\spaceStep$, giving the destructive interference.
    In order to interpret the time behavior of the error more closely, we observe that the time where the leftmost point of $\support{\initialDatum}$, namely $-1/2$, transported by $\rootAmplificationPolynomialFourier{2}$,  and the rightmost point of $\support{\initialDatum}$, namely $1/2$, transported by $\rootAmplificationPolynomialFourier{3}$ merge is at time $\timeVariable = 1/(|\velocityMode{2}| + |\velocityMode{3}|) \approx 0.62$, which is another remarkable time on \Cref{fig:21_error_time}.
    In this figure, the articulate pattern of the error is made up of the interactions of the different waves/modes sustained by the numerical scheme also due to the periodic boundary conditions.

    By writing the global truncation error coefficient in the low-frequency limit:
    \begin{equation*}
        |e^{-i\indicesTime\courantNumber\frequency\spaceStep} - \amplificationFactorFourier{\indicesTime}(\frequency\spaceStep)|^2 = 
        \begin{cases}
            \bigl ( \tfrac{\indicesTime}{2}\bigr )^2 (\courantNumber^2 - 1)^2 (\frequency\spaceStep)^4 + \bigO{|\frequency\spaceStep|^6}, \qquad &\indicesTime~\text{even}, \\
            \bigl ( \tfrac{\indicesTime - 1}{2}\bigr )^2 (\courantNumber^2 - 1)^2 (\frequency\spaceStep)^4 + \bigO{|\frequency\spaceStep|^6}, \qquad &\indicesTime~\text{odd}. 
        \end{cases}
    \end{equation*}
    we see that even and odd steps behave essentially in the same way, as shown in \Cref{fig:21_error_time}. The fixed $-1/2$ term is what remains of the initialization schemes.

    \begin{figure}[h]
        \begin{center}
            \includegraphics[scale = 0.99]{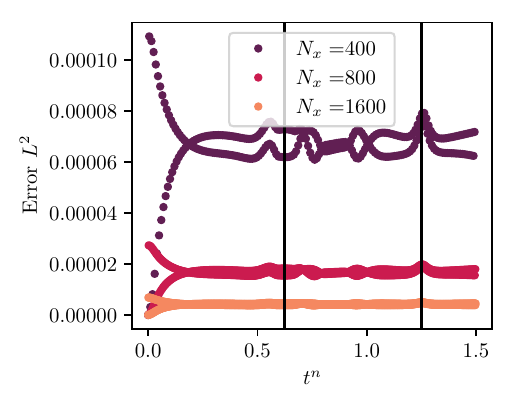}
            \includegraphics[scale = 0.99]{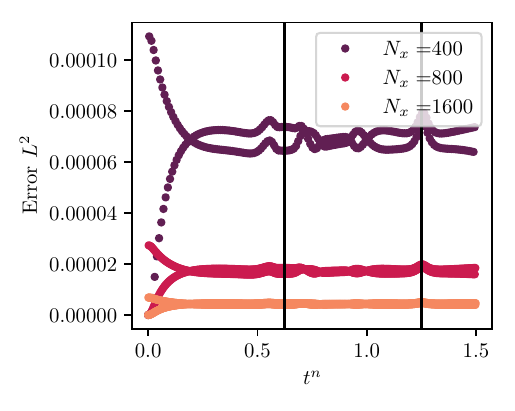}
        \end{center}\caption{\label{fig:12_error_time}$L^2$ error in time for $\testCaseId{2, 1}$ (left) and $\testCaseId{3, 1}$ (right) using $\numberSpacePoint$ grid points.}
    \end{figure}

    \item Initialization $\testCaseId{2, 1}$.
    The modal decomposition for $|\frequency\spaceStep| \ll 1$ is:
    \begin{multline*}
        \amplificationFactorFourier{\indicesTime}(\frequency\spaceStep) = (1 + \tfrac{1}{4}(\courantNumber^2 - 1)(\frequency\spaceStep)^2 + \bigO{|\frequency\spaceStep|^3}) e^{-\imaginaryUnit \advectionVelocity\gridPointTime{\indicesTime}\frequency(1+\bigO{|\frequency\spaceStep|^4})} \\
        + (-i\sqrt{3}(\courantNumber^2 - 1)(19\courantNumber + \sqrt{3}\sqrt{8-5\courantNumber^2})(\frequency\spaceStep)^2 + \bigO{|\frequency\spaceStep|^3}) (-1)^{\indicesTime}  e^{\tfrac{\imaginaryUnit\sqrt{3} \gridPointTime{\indicesTime}}{6} (\sqrt{3}\advectionVelocity + \sqrt{8\latticeVelocity^2 - 5\advectionVelocity^2})\frequency (1+\bigO{|\frequency\spaceStep|^2})} \\
        + (-i\sqrt{3}(\courantNumber^2 - 1)(-19\courantNumber + \sqrt{3}\sqrt{8-5\courantNumber^2})(\frequency\spaceStep)^2 + \bigO{|\frequency\spaceStep|^3}) (-1)^{\indicesTime}  e^{\tfrac{\imaginaryUnit\sqrt{3} \gridPointTime{\indicesTime}}{6} (\sqrt{3}\advectionVelocity - \sqrt{8\latticeVelocity^2 - 5\advectionVelocity^2})\frequency (1+\bigO{|\frequency\spaceStep|^2})}.
    \end{multline*}
    The physical mode and the parasitic modes are all present with a distortion of order two. 
    Now, the coefficients of the $\bigO{|\frequency\spaceStep|^2}$ perturbation of the parasitic modes are not one the opposite of the other, hence do not totally cancel out when the modes meet again when periodic boundary conditions are imposed, see \Cref{fig:12_error_time} for $\gridPointTime{\indicesTime} \approx 1.25$.
    We also have 
    \begin{equation*}
        |e^{-i\indicesTime\courantNumber\frequency\spaceStep} - \amplificationFactorFourier{\indicesTime}(\frequency\spaceStep)|^2 = 
        \begin{cases}
            \tfrac{\indicesTime^2}{36} \courantNumber^2 (\courantNumber^2 - 1)^2  (\frequency\spaceStep)^6 + \bigO{|\frequency\spaceStep|^8}, \qquad &\indicesTime ~ \textnormal{even}, \\
            \tfrac{1}{4}(\courantNumber^2 - 1)^2(\frequency\spaceStep)^4 + \alpha_{\courantNumber}(\indicesTime) (\frequency\spaceStep)^6 + \bigO{|\frequency\spaceStep|^8}, \qquad &\indicesTime ~ \textnormal{odd}, 
        \end{cases}
    \end{equation*}
    where $\alpha_{\courantNumber}(\indicesTime) = \bigO{\indicesTime^2}$ or more explicitly $\alpha_{\courantNumber}(\indicesTime) =  (\tfrac{\indicesTime^2}{9} - \tfrac{17\indicesTime}{72} + \tfrac{1}{9}  )\courantNumber^6 +  ( -\tfrac{11\indicesTime^2}{36} + \tfrac{23\indicesTime}{36} - \tfrac{25}{72}  )\courantNumber^4 +  (\tfrac{5\indicesTime^2}{18} - \tfrac{41\indicesTime}{72} + \tfrac{13}{36}  )\courantNumber^2 +  ( -\tfrac{\indicesTime^2}{12} + \tfrac{\indicesTime}{6} - \tfrac{1}{8}  )$.
    Only the odd steps carry the fixed term being the remaining trace of the Lax-Friedrichs scheme for $\indicesTime = 1$. Moreover, we see that the behavior of the error is radically different, as visible in \Cref{fig:12_error_time}, between even and odd steps. The former typically carry smaller errors. The initial decreasing behavior of the error for odd steps can be understood by noticing that the function $\alpha_{1/4}(\indicesTime)$ decreases in $\indicesTime$.
    \item Initialization $\testCaseId{3, 1}$. The conclusions are the same as $\testCaseId{2, 1}$. The expression for $|e^{-i\indicesTime\courantNumber\frequency\spaceStep} - \amplificationFactorFourier{\indicesTime}(\frequency\spaceStep)|^2$ in the even cases is even more different from the one in the odd cases compared to $\testCaseId{1, 2}$.
    \item Initialization $\testCaseId{3, 4}$.
    \begin{figure}[h]
        \begin{center}
            \includegraphics[scale = 0.99]{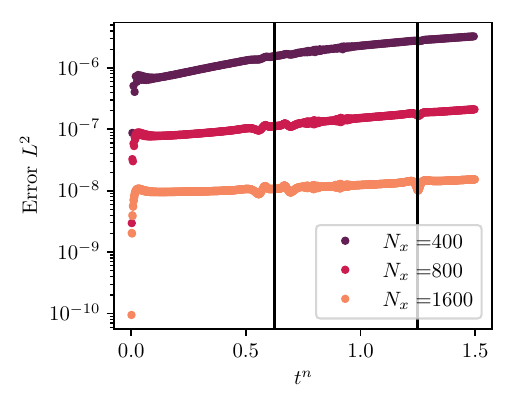}
            \includegraphics[scale = 0.99]{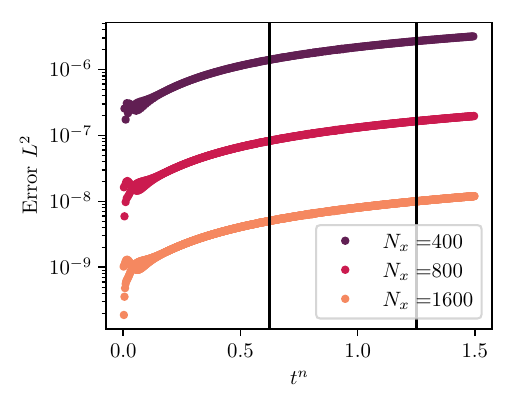}
        \end{center}\caption{\label{fig:43_error_time}$L^2$ error in time for $\testCaseId{3, 4}$ (left) and $\testCaseId{4, 3}$ (right) using $\numberSpacePoint$ grid points.}
    \end{figure}
    The modal decomposition is, for $|\frequency\spaceStep| \ll 1$:
    \begin{multline*}
        \amplificationFactorFourier{\indicesTime}(\frequency\spaceStep) = (1 -\tfrac{\courantNumber (\courantNumber^2-1)(\courantNumber-2)}{48} (\frequency\spaceStep)^4 + \bigO{|\frequency\spaceStep|^5}) e^{-\imaginaryUnit \advectionVelocity\gridPointTime{\indicesTime}\frequency(1+\bigO{|\frequency\spaceStep|^4})} \\
        + (\tfrac{\imaginaryUnit \courantNumber (\courantNumber^2-1)(\courantNumber-2)}{24\sqrt{8-5\courantNumber^2}} (\frequency\spaceStep)^3 + \bigO{|\frequency\spaceStep|^4}) (-1)^{\indicesTime}  e^{\tfrac{\imaginaryUnit\sqrt{3} \gridPointTime{\indicesTime}}{6} (\sqrt{3}\advectionVelocity + \sqrt{8\latticeVelocity^2 - 5\advectionVelocity^2})\frequency (1+\bigO{|\frequency\spaceStep|^2})} \\
        + (-\tfrac{\imaginaryUnit \courantNumber (\courantNumber^2-1)(\courantNumber-2)}{24\sqrt{8-5\courantNumber^2}} (\frequency\spaceStep)^3 + \bigO{|\frequency\spaceStep|^4})  (-1)^{\indicesTime}  e^{\tfrac{\imaginaryUnit\sqrt{3} \gridPointTime{\indicesTime}}{6} (\sqrt{3}\advectionVelocity - \sqrt{8\latticeVelocity^2 - 5\advectionVelocity^2})\frequency (1+\bigO{|\frequency\spaceStep|^2})}.
    \end{multline*}
    The physical mode is present with a distortion of order four: this is not what limits the order.
    What lowers the overall order to three is are the rapidly oscillating spurious modes which have amplitude $\bigO{|\frequency\spaceStep|^3}$. 
    However, for periodic boundary conditions, we see that the scheme converges at order four for a final time $\finalTime \approx 1.25$ (small basins on \Cref{fig:43_error_time}), thanks to the cancellation of spurious modes.
    Also
    \begin{equation*}
        |e^{-i\indicesTime\courantNumber\frequency\spaceStep} - \amplificationFactorFourier{\indicesTime}(\frequency\spaceStep)|^2 = 
        \begin{cases}
            \bigl ( \tfrac{\courantNumber(\courantNumber^2-1)(\courantNumber-2)}{24} \indicesTime\bigr )^2(\frequency\spaceStep)^8 + \bigO{|\frequency\spaceStep|^{10}}, \qquad &\indicesTime~\text{even}, \\
            \bigl ( \tfrac{\courantNumber(\courantNumber^2-1)(\courantNumber-2)}{24} (\indicesTime-1)\bigr )^2(\frequency\spaceStep)^8 + \bigO{|\frequency\spaceStep|^{10}}, \qquad &\indicesTime~\text{odd},
        \end{cases}
    \end{equation*}
    shows that---\confer{} \Cref{fig:43_error_time}---even and odd steps behave essentially in the same way.
    \item Initialization $\testCaseId{4, 3}$.
    The modal decomposition is
    \begin{multline*}
        \amplificationFactorFourier{\indicesTime}(\frequency\spaceStep) = (1 -\tfrac{\courantNumber (\courantNumber^2-1)(\courantNumber-2)}{48} (\frequency\spaceStep)^4 + \bigO{|\frequency\spaceStep|^5}) e^{-\imaginaryUnit \advectionVelocity\gridPointTime{\indicesTime}\frequency(1+\bigO{|\frequency\spaceStep|^4})} \\
        + \bigO{|\frequency\spaceStep|^4} (-1)^{\indicesTime}  e^{\tfrac{\imaginaryUnit\sqrt{3} \gridPointTime{\indicesTime}}{6} (\sqrt{3}\advectionVelocity + \sqrt{8\latticeVelocity^2 - 5\advectionVelocity^2})\frequency (1+\bigO{|\frequency\spaceStep|^2})} \\
        + \bigO{|\frequency\spaceStep|^4}  (-1)^{\indicesTime}  e^{\tfrac{\imaginaryUnit\sqrt{3} \gridPointTime{\indicesTime}}{6} (\sqrt{3}\advectionVelocity - \sqrt{8\latticeVelocity^2 - 5\advectionVelocity^2})\frequency (1+\bigO{|\frequency\spaceStep|^2})},
    \end{multline*}
    where the coefficients of the spurious waves (not given for the sake of compactness) are not one the opposite of the other even at leading order.
    Also
    \begin{equation*}
        |e^{-i\indicesTime\courantNumber\frequency\spaceStep} - \amplificationFactorFourier{\indicesTime}(\frequency\spaceStep)|^2 = 
        \begin{cases}
            \alpha_{\courantNumber}(\indicesTime)  (\frequency\spaceStep)^{10} + \bigO{|\frequency\spaceStep|^{12}}, \qquad &\indicesTime ~ \textnormal{even}, \\
            \bigl ( \tfrac{\courantNumber(\courantNumber^2-1)(\courantNumber-2)}{24}\bigr )^2 (\frequency\spaceStep)^8 +\beta_{\courantNumber}(\indicesTime) (\frequency\spaceStep)^{10}+ \bigO{|\frequency\spaceStep|^{12}}, \qquad &\indicesTime ~ \textnormal{odd}, 
        \end{cases}
    \end{equation*}
    where $\alpha_{\courantNumber}(\indicesTime) = \bigO{\indicesTime^2}$ and $\beta_{\courantNumber}(\indicesTime) = \bigO{\indicesTime^2}$. 
\end{itemize}

\subsection{Convergence with non-smooth initial data}\label{sec:testNonSmoothData}

\begin{figure}[h]
    \begin{center}
        \includegraphics{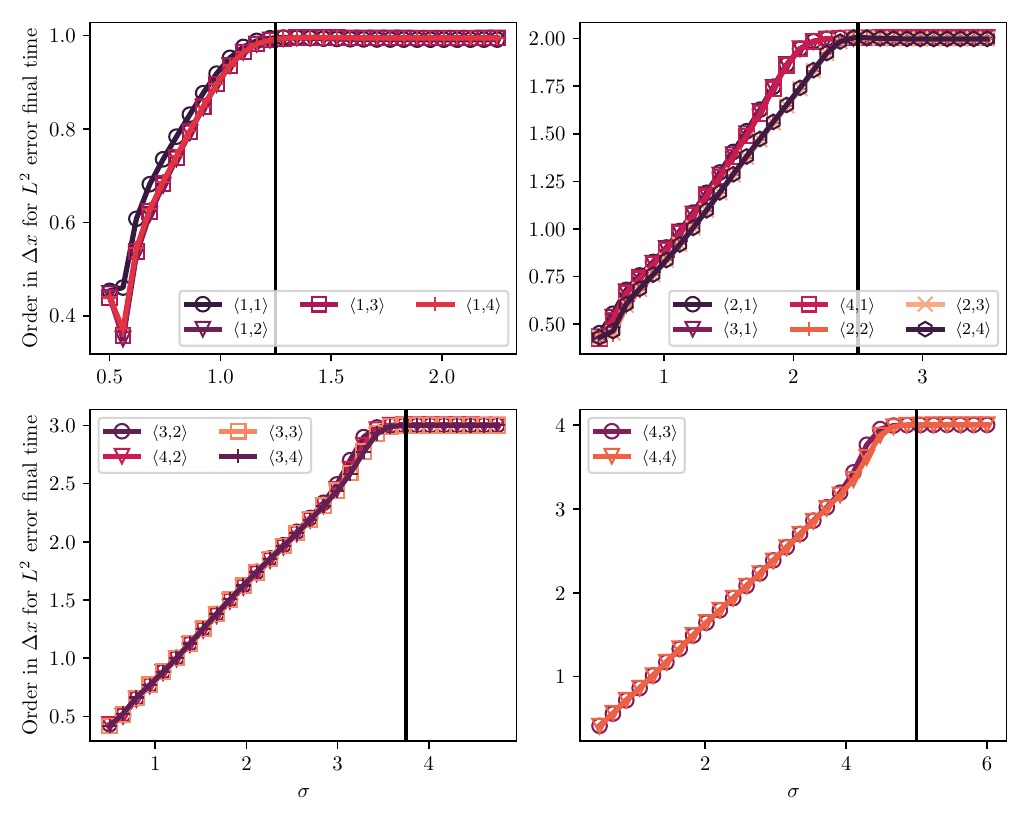}
    \end{center}\caption{\label{fig:ordersLowRegularity}Convergence rates in $\spaceStep$ for the $L^2$ error at final time $\finalTime = 0.2$, as function of $\sobolevRegularityInitialDatum$ when $\initialDatum \in \sobolevSpaceEllTwo{\sobolevRegularityInitialDatum-}(\reals)$. The black vertical line corresponds to $\sigma = \tfrac{5}{4}\min(\maximumOrder, \maximumOrder_2, \maximumOrder_1 + 1)$.}
\end{figure}

In the past, orders of convergence for non-smooth initial data were investigated in \cite{brenner1970stability, brenner1975besov, strikwerda2004finite, courtes17} for genuinely stable schemes.
As previously pointed out, the assumption that $\initialDatum \in \sobolevSpaceEllTwo{\maximumOrder+1}(\reals)$ is often too strong---especially when the overall order $\min(\maximumOrder, \maximumOrder_2, \maximumOrder_1 + 1)$ is way smaller than $\maximumOrder$---to actually observe the expected order.
Indeed, this order can be observed for non-smooth initial data being less that $\sobolevSpaceEllTwo{5}$.
We now perform a numerical simulation to verify this fact. 
The conditions are, as usual, domain  $[-1, 1]$ endowed with periodic boundary conditions, $\courantNumber = 1/4$, and $\finalTime = 0.2$.
We consider the initial functions $\initialDatum(\spaceVariable) = \cos(\pi\spaceVariable)^{\sobolevRegularityInitialDatum - 1/2}\indicatorFunction{(-1, 1)}(2\spaceVariable)$ for $\sobolevRegularityInitialDatum \geq 1$, inspired by \cite[Example II, Section 2.4]{brenner1975besov}.
These are such that $\initialDatum \in \besovSpace{\sobolevRegularityInitialDatum}{2}{\infty} (\reals)$ in terms of Besov spaces or $\initialDatum \in \sobolevSpaceEllTwo{\sobolevRegularityInitialDatum-} (\reals)$ in terms of Sobolev spaces.
We measure the convergence rate in $\spaceStep$ with respect to the $L^2$ norm of the error at final time, obtaining the results in \Cref{fig:ordersLowRegularity}.
When the overall scheme is less than fourth-order accurate (maximal order), the order saturates before the smoothness $\initialDatum \in \sobolevSpaceEllTwo{5}(\reals)$ for the initial datum is reached. 
The value of $\sobolevRegularityInitialDatum$ from which the order saturates can be estimated using \cite[Corollary 10.3.2]{strikwerda2004finite} (notice that the error estimates in this result contain rarely observed logarithmic terms in $\spaceStep$) by $\sobolevRegularityInitialDatum \approx \tfrac{5}{4}\min(\maximumOrder, \maximumOrder_2, \maximumOrder_1 + 1)$.
This is obtained by intersecting the line $\sobolevRegularityInitialDatum \mapsto \tfrac{4}{5} \sobolevRegularityInitialDatum$, associated with a fourth-order (bulk) scheme, with the fact that the order is limited by $\min(\maximumOrder, \maximumOrder_2, \maximumOrder_1 + 1)$. 
This is the intuitive best approximation of the minimal smoothness that we were able to find.
Slight deviations from this behavior can be understood by looking at \eqref{eq:greenDecompositionConsistency} or \eqref{eq:totalErrorInequality}.
For the selected final time, there might still be an important (yet difficult to describe) effect of the part of the solution associated with $\greenFunction{\indicesTime}{2}$ and $\greenFunction{\indicesTime}{1}$. If we take larger $\finalTime$ (simulations not presented in the paper), the term brought by $\rootAmplificationPolynomialFourier{1}^{\indicesTime}$ becomes dominant and the observed behavior adhere better and better to the estimate according to which the order is $\min(\min(\maximumOrder, \maximumOrder_2, \maximumOrder_1 + 1), \tfrac{4}{5}\sobolevRegularityInitialDatum)$ for $\initialDatum \in \sobolevSpaceEllTwo{\sobolevRegularityInitialDatum-}(\reals)$.

\section{Role of the round-off errors}\label{sec:roundoffErrors}

The discussion that has been developed hitherto has been conducted as if actual numerical simulations were done at arbitrary machine precision, without being affected by round-off errors.
However, the instability cancellation elucidated in \Cref{sec:stability} could not actually take place in floating-point arithmetic, due to round-off errors.
As pointed out by \cite{trefethen1992definition}, this does however not question the interest of the previous discussions.
Quite the opposite, this gives one more reason why, besides the order of consistency/convergence, one generally avoids weakly unstable schemes: they could lead unpredictable behaviors in presence of floating point numbers.

\begin{figure}[h]
    \hspace{1.5cm}LBM \eqref{eq:collisionLatticeBoltzmann}/\eqref{eq:streamLatticeBoltzmann} with \eqref{eq:FourthOrderInitializationLBM}, $\delta = 0$ \hspace{1.8cm}FD \eqref{eq:FourthOrderLeapFrog} with \eqref{eq:FourthOrderInitializationLBM}, $\delta = 0$ \hspace{3.35cm}OS4
    \begin{center}
        \includegraphics{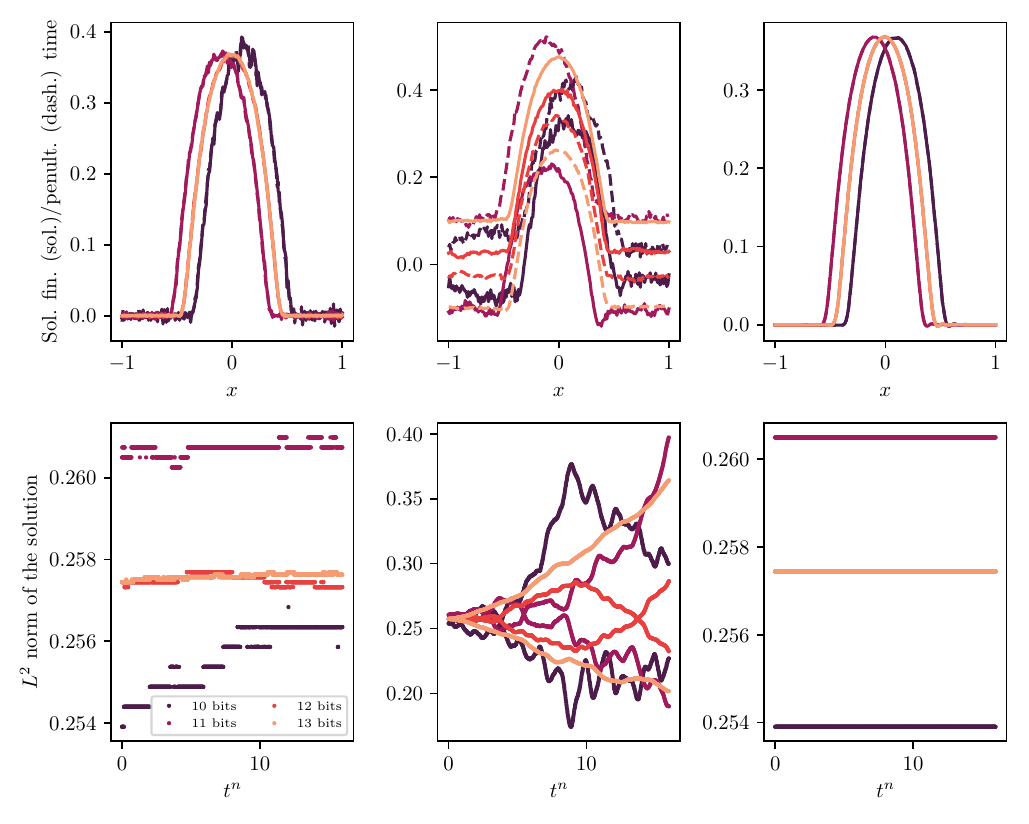}
    \end{center}\caption{\label{fig:leapFrogPrecision}Test varying the floating point precision.}
\end{figure}

We perform the same test as we did in \Cref{sec:NumericalExperiment} except for the fact that we use a longer final time of $\finalTime = 16$ and fix the number of points in the domain to $\numberSpacePoint = 200$.
Computations are carried out using unusually low floating-point precisions, ranging from 10 to 13 bits, to be compared with the usual 64 bits double precision in \texttt{Python}.
We employ the stable lattice Boltzmann scheme \eqref{eq:collisionLatticeBoltzmann}/\eqref{eq:streamLatticeBoltzmann} with \eqref{eq:FourthOrderInitializationLBM} using $\delta = 0$, its weakly unstable corresponding Finite Difference scheme \eqref{eq:FourthOrderLeapFrog} with initialization \eqref{eq:FourthOrderInitializationLBM} using $\delta  = 0$, and the one-step stable OS4 scheme.
Notice that the first two procedures are perfectly equivalent ``on paper''.
In \Cref{fig:leapFrogPrecision}, we show the solution both at the final and penultimate time step, and its $L^2$ norm as function of time, which we take as a measure of the instability when applicable.

We observe that for the weakly unstable scheme \eqref{eq:FourthOrderLeapFrog} with initialization \eqref{eq:FourthOrderInitializationLBM} with $\delta = 0$, especially for very low floating point accuracy, the solution is totally shifted either upward or downward, according to the parity of the time step $\indicesTime$. This is the reason why we decided to show both the solution at the final and penultimate time step.
The fact that the solution is approximately shifted by a constant proportional to $(-1)^{\indicesTime} \indicesTime$ stems from the following features of the scheme.
The constant nature of the shift is dictated by the unstable mode corresponding to the frequency zero, and thus corresponding to an unstable mode in the physical space under the form of a constant function: by inverse Fourier transform $(2\pi)^{-1/2} \int_{|\frequency| \leq \pi} e^{i\indicesSpace \frequency}\delta_{0}(\frequency)\differential{\frequency} = (2\pi)^{-1/2} $ for every $\indicesSpace \in \relatives$.
The oscillation trending like $(-1)^{\indicesTime}$ comes from the fact that the unstable modes are associated with the spurious roots, which are such that $\rootAmplificationPolynomialFourier{2}(0)^{\indicesTime} = \rootAmplificationPolynomialFourier{3}(0)^{\indicesTime} = (-1)^{\indicesTime}$. The growth behaving roughly proportionally to $\indicesTime$ comes from the instability due to the multiple nature of the roots at frequency zero.
For the stable OS4 scheme, the simulation remains perfectly controlled and the norm of the solution does not increase even using a very rough floating point arithmetic.
The solution for the lattice Boltzmann scheme \eqref{eq:collisionLatticeBoltzmann}/\eqref{eq:streamLatticeBoltzmann} with \eqref{eq:FourthOrderInitializationLBM} and $\delta = 0$ is completely different from the one of its corresponding Finite Difference scheme and remains totally stable, as it should be.
This indicates that the former implementation has a better backward stability compared to the latter.
Small oscillations, compared to the OS4 scheme, come from the lack of dissipation of the lattice Boltzmann scheme on the whole spectrum of frequencies.

\begin{remark}[On the enhanced stability of the lattice Boltzmann algorithm]
    To understand the enhanced stability of \eqref{eq:collisionLatticeBoltzmann}/\eqref{eq:streamLatticeBoltzmann}/\eqref{eq:FourthOrderInitializationLBM} compared to \eqref{eq:FourthOrderLeapFrog}/\eqref{eq:FourthOrderInitializationLBM}, we come back to the way of turning lattice Boltzmann schemes into Finite Difference ones.
    In the entire paper, the problematic frequency has been $\frequency = 0$, where the matrix giving the lattice Boltzmann scheme becomes 
    \begin{equation*}
        \fourierTransformed{\schemeMatrix}(0) = 
        \begin{bmatrix}
            1 & 0 & 0\\
            2\courantNumber & - 1 & 0 \\
            2(\courantNumber^2 - 1) & 0 & -1
        \end{bmatrix}, \qquad \textnormal{hence} \qquad 
        \fourierTransformed{\schemeMatrix}(0)^{\indicesTime} = 
        \begin{bmatrix}
            1 & 0 & 0\\
            \courantNumber (1-(-1)^{\indicesTime}) & (-1)^{\indicesTime} & 0 \\
            (\courantNumber^2 - 1) (1-(-1)^{\indicesTime}) & 0 & (-1)^{\indicesTime}
        \end{bmatrix},
    \end{equation*}
    thus is uniformly power bounded and thus the original lattice Boltzmann scheme genuinely stable, since 
    \begin{equation}\label{eq:tmp1}
        \fourierTransformed{\discrete{\firstMomentLetter}}^{\indicesTime}(0) = \transpose{\canonicalBasisVector{1}}\fourierTransformed{\schemeMatrix}(0)^{\indicesTime} \transpose{(\fourierTransformed{\discrete{\firstMomentLetter}}^{0}(0), \fourierTransformed{\discrete{\secondMomentLetter}}^{0}(0), \fourierTransformed{\discrete{\thirdMomentLetter}}^{0}(0))} = \fourierTransformed{\discrete{\firstMomentLetter}}^{0}(0).
    \end{equation}
    Observe that $\fourierTransformed{\schemeMatrix}(0)^{2} = \identityMatrix$, thus the polynomial $\timeShiftOperator^2 - 1$ dividing $\determinant{\timeShiftOperator\identityMatrix - \fourierTransformed{\schemeMatrix}(0)}$ annihilates $\fourierTransformed{\schemeMatrix}(0)$: it is its minimal polynomial. 
    This difference between characteristic and minimal polynomials is coherent with the fact that $\fourierTransformed{\schemeMatrix}(0)$ is not similar to its companion matrix $\companionMatrix(0)$, with the former being diagonalisable and the latter no.
    As pointed out in \cite{bellotti2022finite}, the fact that a polynomial annihilates the scheme matrix allows to recover the corresponding Finite Difference scheme through it.
    However, this should be true for all frequency $\frequency$, which is not the case here for $\timeShiftOperator^2 - 1$.
    Nevertheless, for the sole unstable frequency $\frequency = 0$ (or equivalently, constant solutions), the dynamics of the lattice Boltzmann scheme \eqref{eq:collisionLatticeBoltzmann}/\eqref{eq:streamLatticeBoltzmann}, as far as $\firstMomentDiscrete{}{}$ is concerned, can be rewritten using something simpler (and more stable) than \eqref{eq:FourthOrderLeapFrog}, which reads $ \solutionDiscreteFourier{\indicesTime + 1}(0) = \solutionDiscreteFourier{\indicesTime - 1}(0)$.
    This scheme is genuinely stable, since it has two eigenvalues $\pm 1$ on the unit circle being distinct.
    It has to be interpreted as a constraint fulfilled by the solution $\firstMomentDiscrete{}{}$ of \eqref{eq:collisionLatticeBoltzmann}/\eqref{eq:streamLatticeBoltzmann}. The constraint given by \eqref{eq:FourthOrderLeapFrog} is also satisfied, but it yields an overconstrained mechanism, more prone to instabilities when the exact compensation resulting in overall stability could not take place, due to floating-point numbers.
    Pushing this way of rasoning even further, we have observed in \cite{bellotti2022finite} that if a polynomial annihilates (again for all $\frequency$) the first row of the scheme matrix, hence it divides its minimal polynomial, it yields a corresponding Finite Difference scheme as well. In our case, for $\fourierTransformed{\schemeMatrix}(0)$, the polynomial is simply $\timeShiftOperator - 1$, resulting in $ \solutionDiscreteFourier{\indicesTime + 1}(0) = \solutionDiscreteFourier{\indicesTime}(0)$, which is perfectly compatible with \eqref{eq:tmp1} and extremely stable.
\end{remark}

\FloatBarrier

\section{Application: ``nested'' kinetic schemes for non-linear systems}\label{sec:applicationKinetic}

We now showcase how a fourth-order scheme for the linear transport equation can be used, utilizing a Jin-Xin relaxation system \cite{jin1995relaxation}, as the the basic brick to approximate the solution of a non-linear system of conservation laws \cite{aregba2000discrete}. 
This application is strongly inspired by the work of \cite{coulette2019high}.
We call the schemes ``nested'' because we use an external kinetic scheme based on the Jin-Xin relaxation system to deal with the non-linearity and then inner lattice Boltzmann schemes---a special type of kinetic schemes---to solve the transport equations appearing in the Jin-Xin system.

\subsection{Jin-Xin relaxation system}

We tackle the solution of the system on $\vectorial{\conservedMomentsSystemConservationLaws} : \reals \times \reals \to \reals^{\numberConservationLaws}$ which reads
\begin{equation}\label{eq:systemConservationLaws}
    \partial_{\timeVariable} \vectorial{\conservedMomentsSystemConservationLaws} + \partial_{\spaceVariable} \vectorial{\fluxSystemConservationLaws}(\vectorial{\conservedMomentsSystemConservationLaws}) = 0,
\end{equation}
where $\vectorial{\fluxSystemConservationLaws}: \reals^{\numberConservationLaws} \to \reals^{\numberConservationLaws}$ is a smooth and possibly non-linear flux.
To give a simple example, taking $\numberConservationLaws = 1$ and $\fluxSystemConservationLaws(\conservedMomentsSystemConservationLaws) = \tfrac{1}{2}\conservedMomentsSystemConservationLaws^2$ gives the inviscid Burgers equation.
In order to isolate the non-linearity of the problem into a local relaxation term which is easily tractable, we consider the associated Jin-Xin relaxation system \cite{jin1995relaxation}
\begin{equation}\label{eq:relaxationSystem}
    \begin{cases}
        \partial_{\timeVariable} \vectorial{\conservedMomentsSystemConservationLaws} + \partial_{\spaceVariable} \vectorial{\nonConservedMomentsSystemConservationLaws} = 0, \\
        \partial_{\timeVariable} \vectorial{\nonConservedMomentsSystemConservationLaws} + \kineticVelocity^2 \partial_{\spaceVariable} \vectorial{\conservedMomentsSystemConservationLaws} = -\frac{1}{\relaxationTime}(\vectorial{\nonConservedMomentsSystemConservationLaws} - \vectorial{\fluxSystemConservationLaws}(\vectorial{\conservedMomentsSystemConservationLaws})),
    \end{cases}
\end{equation}
with $\kineticVelocity > 0$ a kinetic velocity and $\relaxationTime \geq 0$ a relaxation time, whose formal limit for $\relaxationTime \to 0$ gives back \eqref{eq:systemConservationLaws}.
Using the change of basis $\vectorial{\conservedMomentsSystemConservationLaws} = \vectorial{\distributionFunctionLetter}^+ + \vectorial{\distributionFunctionLetter}^-$ and $\vectorial{\nonConservedMomentsSystemConservationLaws} = \kineticVelocity (\vectorial{\distributionFunctionLetter}^+ - \vectorial{\distributionFunctionLetter}^-)$, the relaxation system \eqref{eq:relaxationSystem} can be recast into its kinetic form
\begin{equation}\label{eq:relaxationSystemKinetic}
    \partial_{\timeVariable} \vectorial{\distributionFunctionLetter}^{\pm} \pm \kineticVelocity\partial_{\spaceVariable} \vectorial{\distributionFunctionLetter}^{\pm} = -\frac{1}{\relaxationTime} (\vectorial{\distributionFunctionLetter}^{\pm} - \vectorial{\distributionFunctionLetter}^{\pm, \atEquilibrium}(\vectorial{\conservedMomentsSystemConservationLaws} )), \qquad \textnormal{where} \quad \vectorial{\distributionFunctionLetter}^{\pm, \atEquilibrium}(\vectorial{\conservedMomentsSystemConservationLaws} ) = \frac{1}{2} \vectorial{\conservedMomentsSystemConservationLaws} \pm \frac{1}{2\kineticVelocity} \vectorial{\fluxSystemConservationLaws}(\vectorial{\conservedMomentsSystemConservationLaws}).
\end{equation}

\subsection{Splitting and numerical schemes}

Notice that the left hand side of \eqref{eq:relaxationSystemKinetic} is nothing but a linear transport equation at velocities $\pm \kineticVelocity$, which can therefore be approximated using the scheme \eqref{eq:FourthOrderLeapFrog} introduced in the first part of the paper or, for efficient computations taking advantage of the peculiar structure of lattice Boltzmann methods, using \eqref{eq:collisionLatticeBoltzmann} and \eqref{eq:streamLatticeBoltzmann}.
We then split \eqref{eq:relaxationSystemKinetic} into its transport (T) part $\partial_{\timeVariable} \vectorial{\distributionFunctionLetter}^{\pm} \pm \kineticVelocity\partial_{\spaceVariable} \vectorial{\distributionFunctionLetter}^{\pm} = 0$, and relaxation part (R) $\partial_{\timeVariable} \vectorial{\distributionFunctionLetter}^{\pm} = -\tfrac{1}{\relaxationTime} (\vectorial{\distributionFunctionLetter}^{\pm} - \vectorial{\distributionFunctionLetter}^{\pm, \atEquilibrium}(\vectorial{\conservedMomentsSystemConservationLaws} ))$.
The transport part (T) shall be solved using $\numberSubstepTransportSolver \geq 3$ steps of the fourth-order lattice Boltzmann scheme for the transport equation or its corresponding Finite Difference scheme.
The reason to perform more than three steps to reach the desired final time $\splittingTimeStep > 0$ is dictated by the fact that we want the multi-step method to produce fourth-order accurate approximations. 
Therefore, the time-step $\timeStep$ for this sub-routine will be given by $\timeStep = \splittingTimeStep/\numberSubstepTransportSolver$.
The associated discrete operator is denoted $\operatorTransport(\splittingTimeStep)$.
The relaxation part (R) is solved using the following trapezoidal quadrature, see \cite{dellar2013interpretation}:
\begin{multline*}
    \int_{0}^{\splittingTimeStep} \partial_{\timeVariable} \vectorial{\distributionFunctionLetter}^{\pm} \differential{\timeVariable} = \vectorial{\distributionFunctionLetter}^{\pm}(\splittingTimeStep) - \vectorial{\distributionFunctionLetter}^{\pm}(0) = -\frac{1}{\relaxationTime} \int_{0}^{\splittingTimeStep} \partial_{\timeVariable}  (\vectorial{\distributionFunctionLetter}^{\pm} - \vectorial{\distributionFunctionLetter}^{\pm, \atEquilibrium}(\vectorial{\conservedMomentsSystemConservationLaws} ))\differential{\timeVariable} \\
    = -\frac{\splittingTimeStep}{2\relaxationTime} ((\vectorial{\distributionFunctionLetter}^{\pm} - \vectorial{\distributionFunctionLetter}^{\pm, \atEquilibrium}(\vectorial{\conservedMomentsSystemConservationLaws} ))(\splittingTimeStep) + (\vectorial{\distributionFunctionLetter}^{\pm} - \vectorial{\distributionFunctionLetter}^{\pm, \atEquilibrium}(\vectorial{\conservedMomentsSystemConservationLaws} ))(0)) + \bigO{\splittingTimeStep^2}.
\end{multline*}
Using the fact that the relaxation phase conserves $\vectorial{\conservedMomentsSystemConservationLaws}$ and thus $\vectorial{\distributionFunctionLetter}^{\pm, \atEquilibrium}(\vectorial{\conservedMomentsSystemConservationLaws}(\splittingTimeStep) ) = \vectorial{\distributionFunctionLetter}^{\pm, \atEquilibrium}(\vectorial{\conservedMomentsSystemConservationLaws} (0))$, the algorithm can be kept explicit and thus reads
\begin{equation*}
    \vectorial{\distributionFunctionLetter}^{\pm}(\splittingTimeStep) = \frac{2\relaxationTime - \splittingTimeStep}{2\relaxationTime + \splittingTimeStep} \vectorial{\distributionFunctionLetter}^{\pm}(0) + \frac{2\splittingTimeStep}{2\relaxationTime + \splittingTimeStep} \vectorial{\distributionFunctionLetter}^{\pm, \atEquilibrium}(\vectorial{\conservedMomentsSystemConservationLaws} (0)) \xrightarrow[\relaxationTime \to 0]{} - \vectorial{\distributionFunctionLetter}^{\pm}(0) + 2 \vectorial{\distributionFunctionLetter}^{\pm, \atEquilibrium}(\vectorial{\conservedMomentsSystemConservationLaws} (0)).
\end{equation*}
This relaxation limit $\relaxationTime \to 0$ is unsurprisingly similar to \eqref{eq:collisionLatticeBoltzmann} for the non-conserved moments when the relaxation parameters $\relaxationParameterSecondMoment$ and $\relaxationParameterThirdMoment$ equal two.
Notice that the operator associated with this relaxation step in the relaxation limit $\relaxationTime \to 0$ does not depend on $\splittingTimeStep$.
It shall be denoted by $\operatorRelaxation$ and is an involution, meaning that $\operatorRelaxation^2 = \vectorial{\discrete{I}}$.
Following \cite{coulette2019high}, we construct the basic brick of a splitting procedure, called $\basicSplittingOperator$, given by
\begin{equation}\label{eq:basicBrickSplitting}
    \basicSplittingOperator(\splittingTimeStep) = \operatorTransport \Bigl ( \frac{\splittingTimeStep}{4}\Bigr ) \operatorRelaxation \operatorTransport \Bigl ( \frac{\splittingTimeStep}{2}\Bigr ) \operatorRelaxation \operatorTransport \Bigl ( \frac{\splittingTimeStep}{4}\Bigr ).
\end{equation}
This operator is time-symmetric up to order four: the transformation $\splittingTimeStep \mapsto -\splittingTimeStep$ makes it its own inverse operator.
The final operator to be applied is constructed using a fourth-order five-stages symmetric Suzuki splitting \cite{mclachlan2002splitting} given by 
\begin{equation*}
    \operatorSuzuki (\splittingTimeStep) = \basicSplittingOperator\Bigl (\frac{1}{4-4^{1/3}}\splittingTimeStep \Bigr )^2 \basicSplittingOperator\Bigl (-\frac{4^{1/3}}{4-4^{1/3}}\splittingTimeStep \Bigr )\basicSplittingOperator\Bigl (\frac{1}{4-4^{1/3}}\splittingTimeStep \Bigr )^2.
\end{equation*}
Notice that the central stage features a negative time-step.
We handle this point using the fact that the relaxation $\operatorRelaxation$ appearing in \eqref{eq:basicBrickSplitting} does not depend on the time-step and that the linear transport equation solved by $\operatorTransport$ is reversible in time, hence we just switch $\pm \kineticVelocity \mapsto \mp \kineticVelocity$.

\subsection{Numerical experiments}

\begin{figure}[h]
    \begin{center}
        \includegraphics[scale = 0.99]{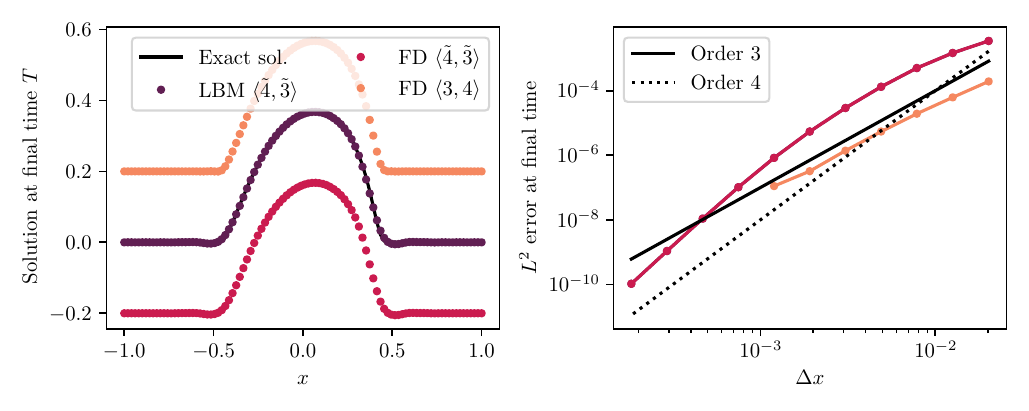}
    \end{center}\caption{\label{fig:kinetic_solver_Burgers}Left: solution of the Burgers equation at final time $\finalTime = 0.2$ using $\numberSpacePoint = 100$ grid points for different choices of initialization using the lattice Boltzmann algorithm and the corresponding Finite Difference scheme. The notation $\testCaseId{\tilde{4}, \tilde{3}}$ indicates \eqref{eq:FourthOrderInitializationLBM} with $\delta = 0$. Curves are shifted in order to distinguish between them. Right: convergence of the $L^2$ error at final time for the Burgers equation.}
\end{figure}

To test the previously described numerical procedure and---in particular---the fact that it ensures an overall fourth-order method for systems of non-linear PDEs, we consider the inviscid Burgers equation using the same setting of \Cref{sec:NumericalExperiment} as far computational domain, initial datum $\initialDatum$, and final time $\finalTime$ are concerned.
We perform numerical simulations considering $\splittingTimeStep = \spaceStep$, $\numberSubstepTransportSolver = 6$, and $\advectionVelocity = 1$.
This choice respects the CFL condition, \confer{} \Cref{prop:StabilityThirdLeapFrog}, for the splitting stage with the largest step, given by 
\begin{equation*}
    \frac{|\advectionVelocity|4^{1/3}}{2\numberSubstepTransportSolver (4 - 4^{1/3})} < \frac{1}{2}.
\end{equation*}

The results are presented in \Cref{fig:kinetic_solver_Burgers}.
We simulate using the original lattice Boltzmann scheme \eqref{eq:collisionLatticeBoltzmann}/\eqref{eq:streamLatticeBoltzmann} with initialization \eqref{eq:FourthOrderInitializationLBM} and $\delta = 0$, its corresponding Finite Difference scheme \eqref{eq:FourthOrderLeapFrog} with initialization \eqref{eq:FourthOrderInitializationLBM} and $\delta = 0$ (which are indeed equivalent), and \eqref{eq:FourthOrderLeapFrog} with initialization $\testCaseId{3, 4}$.
We observe order four when using \eqref{eq:FourthOrderInitializationLBM} with $\delta = 0$ and order three when using $\testCaseId{3, 4}$, as expected from the previously conducted analyses.
However, we see that $\testCaseId{3, 4}$ eventually leads to instabilities when $\spaceStep$ decreases.
This is probably due to the fact that \eqref{eq:FourthOrderLeapFrog} is non-dissipative for every mode (all its eigenvalue are on the unit circle for every frequency), both physical and parasitic, and that performing a large number of operations, due to the quite complex splitting procedure, triggers the instability of \eqref{eq:FourthOrderLeapFrog} due to round-off errors.
Moreover, we have previously observed that the choice $\testCaseId{3, 4}$ excites the unstable parasitic modes more (\idEst{} at order $\bigO{\spaceStep^3}$) than $\testCaseId{4, 3}$, $\testCaseId{4, 4}$ or \eqref{eq:FourthOrderInitializationLBM} (\idEst{} at order $\bigO{\spaceStep^4}$).

These numerical experiments make the interest of developing high-order lattice Boltzmann schemes for the linear transport equation clear, for this allows to tackle non-linear equations in an efficient manner, thanks to a Jin-Xin relaxation.
However, these experiments also show that genuine stability, and possibly some dissipation, are highly desirable property when working with floating point arithmetic.

\section*{Acknowledgements}

The author would like to deeply thank C. Court\`es (University of Strasbourg) for the discussion on this paper, L. Fran\c cois (ONERA) for pointing out that the features investigated in the paper come from the lack of genuine stability, V. Michel-Dansac (INRIA) for pointing out \Cref{rem:tryingToStabilize}, P. Helluy and L. Navoret (University of Strasbourg) for the help with Suzuki splittings, and D. Stantejsky (McMaster University) for his prompt advice on Sobolev spaces and Fourier analysis.
The author's post-doc position is funded by IRMIA++ from University of Strasbourg.

%

\bibliographystyle{alpha}
\bibliography{biblio}

\appendix

\section{Proof of \Cref{prop:StabilityThirdLeapFrog}}\label{app:ProofStabilityLeapFrog}

We use the iterative procedure by \cite{miller1971location} to determine if $\amplificationPolynomial{\frequency\spaceStep}{\timeShiftOperator}$ is a simple \emph{von Neumann} polynomial, namely it has roots inside the closed unit disk and those on the unit circle are simple.
    Let us set $\millerPolynomial{3}(\timeShiftOperator) = \amplificationPolynomial{\frequency\spaceStep}{\timeShiftOperator}$ by \eqref{eq:amplificationPolynomialFourthOrderLeapFrog}.
    We obtain that $\millerPolynomialConjugate{3}(\timeShiftOperator) = -\timeShiftOperator^3 - \commonOperator(\frequency\spaceStep)\timeShiftOperator^{2} + \conjugate{\commonOperator}(\frequency\spaceStep)\timeShiftOperator + 1$. This results in $\millerPolynomial{2}(\timeShiftOperator) = \timeShiftOperator^{-1}(\millerPolynomialConjugate{3}(0)\millerPolynomial{3}(\timeShiftOperator) - \millerPolynomial{3}(0)\millerPolynomialConjugate{3}(\timeShiftOperator)) \equiv 0$, hence we have to show that 
    \begin{equation}\label{eq:derivativePolynomial}
        \millerPolynomialOther{2}(\timeShiftOperator) \definitionEquality \textnormal{d}_{\timeShiftOperator} \millerPolynomial{3}(\timeShiftOperator) = 3\timeShiftOperator^2 + 2\commonOperator(\frequency\spaceStep)\timeShiftOperator - \conjugate{\commonOperator}(\frequency\spaceStep)
    \end{equation}
    is a \emph{Schur} polynomial, namely its roots belong to the open unit disk.
    We have $\millerPolynomialOtherConjugate{2}(\timeShiftOperator) = -\commonOperator(\frequency\spaceStep)\timeShiftOperator^2 + 2\conjugate{\commonOperator}(\frequency\spaceStep)\timeShiftOperator  +3$.
    A first condition to check is $|\millerPolynomialOther{2}(\timeShiftOperator) | < |\millerPolynomialOtherConjugate{2}(0)| $, giving $|\commonOperator(\frequency\spaceStep)| < 3$.
    It is simpler to work with the square of the modulus, which provides 
    \begin{equation}\label{eq:StabilityInequalityLeapFrog}
        (4\courantNumber^4 - 17\courantNumber^2 + 4)\cos^2(\frequency\spaceStep) + 2(-4\courantNumber^4 + 5\courantNumber^2 - 1)\cos(\frequency\spaceStep) + (4\courantNumber^4 + 7\courantNumber^2 - 20) < 0.
    \end{equation}
    Calling $\cosineShorthand \definitionEquality \cos(\frequency\spaceStep) \in [-1, 1]$, the previous equation gives a quadratic inequality on $\cosineShorthand$ to be satisfied on $[-1, 1]$. 
    Let us study it according to the sign of the leading term.
    \begin{itemize}
        \item $4\courantNumber^4 - 17\courantNumber^2 + 4 = (2\courantNumber-1)(2\courantNumber+1)(\courantNumber-2)(\courantNumber+2) > 0$, which is equivalent to $|\courantNumber| < 1/2$ or $|\courantNumber| > 2$. Under this condition, the maximum of the left-hand side in \eqref{eq:StabilityInequalityLeapFrog} is reached on the boundary of $[-1, 1]$.
        Taking $\cosineShorthand = -1$ provides $16\courantNumber^4 - 20\courantNumber^2 - 14 < 0$, which solution is $|\courantNumber| < \sqrt{7}/2 \approx 1.3229$. 
        Considering $\cosineShorthand = 1$ trivially gives $-18 < 0$.
        In this case, the overall condition is $|\courantNumber| < 1/2$.
        \item $4\courantNumber^4 - 17\courantNumber^2 + 4 = (2\courantNumber-1)(2\courantNumber+1)(\courantNumber-2)(\courantNumber+2) \leq 0$, which is equivalent to $1/2 \leq |\courantNumber | \leq 2$. In this case, the maximum of the left-hand side of \eqref{eq:StabilityInequalityLeapFrog} is reached inside $[-1, 1]$. The value of the inequality \eqref{eq:StabilityInequalityLeapFrog} on the maximum is given by $4(-4\courantNumber^4 + 5\courantNumber^2 - 1)^2 - 4(4\courantNumber^4 - 17\courantNumber^2 + 4)(4\courantNumber^4 + 7\courantNumber^2 - 20) < 0$.
        The solutions are $1/2 < |\courantNumber| < \sqrt{3/2} \approx 1.2247$.
        Overall, we the condition is $1/2 < |\courantNumber| < \sqrt{3/2}$.
    \end{itemize}
    We see that for $|\courantNumber| = 1/2$, the inequality \eqref{eq:StabilityInequalityLeapFrog} is also verified. 
    Therefore, from the previous discussion, the condition we obtain is $|\courantNumber| < \sqrt{3/2}$.
    The next step in the process is to check that $\millerPolynomialOther{1}(\timeShiftOperator) = (9 - |\commonOperator(\frequency\spaceStep)|^2) \timeShiftOperator + 6\commonOperator(\frequency\spaceStep) + 2 \conjugate{\commonOperator}(\frequency\spaceStep)^2$ is a \emph{Schur} polynomial as well.
    This condition reads $|6\commonOperator(\frequency\spaceStep) + 2 \conjugate{\commonOperator}(\frequency\spaceStep)^2|^2 -  (9 - |\commonOperator(\frequency\spaceStep)|^2)^2 < 0$.
    Observe that this does not hold for $\frequency = 0$ (or $\cosineShorthand = 1$), because here we have multiple roots on the unit circle.
    For $-1\leq \cosineShorthand <1$:
    \begin{multline*}
        (16\courantNumber^8 - 136\courantNumber^6 + 321 \courantNumber^4 - 136 \courantNumber^2 + 16)\cosineShorthand^4 - 4(16\courantNumber^8 - 64 \courantNumber^6 + 195 \courantNumber^4 - 127 \courantNumber^2 - 20) \cosineShorthand^3 \\
        + 3(32 \courantNumber^8 + 16\courantNumber^6 + 30\courantNumber^4 - 137\courantNumber^2 + 32) \cosineShorthand^2 - 4 (16\courantNumber^8 + 80\courantNumber^6 - 219\courantNumber^4 + 107\courantNumber^2 + 16)\cosineShorthand \\
        +16\courantNumber^8 + 152 \courantNumber^6 - 507\courantNumber^4 + 467 \courantNumber^2 - 128 < 0.
    \end{multline*}
    As previously discussed, $\cosineShorthand = 1$ is a zero of the left-hand side, thus we can factorize it out to yield 
    \begin{multline}\label{eq:StabilityThirdLeapFrog}
        (16 \courantNumber^8 - 136 \courantNumber^6 + 321 \courantNumber^4 - 136 \courantNumber^2 + 16) \cosineShorthand^3 - 3 (16 \courantNumber^8 - 40 \courantNumber^6 + 153 \courantNumber^4 - 124 \courantNumber^2 - 32) \cosineShorthand^2 \\
         + 3 (16 \courantNumber^8 + 56 \courantNumber^6 - 123 \courantNumber^4 - 13 \courantNumber^2 + 64) \cosineShorthand -16 \courantNumber^8 - 152 \courantNumber^6 + 507 \courantNumber^4 - 467 \courantNumber^2 + 128 > 0.
    \end{multline}
    Assume, without loss of generality, that $0 \leq \courantNumber \leq 1$.
    Differentiating the left-hand side in $\mu$, we obtain a second-order equation for the extremal point of this expression.
    The one we are interested in is a minimum explicitly given by 
    \begin{equation*}
        \cosineShorthand_{\textnormal{min}} = \frac{-32 - 124 \courantNumber^2 + 153 \courantNumber^4 - 40 \courantNumber^6 + 16 \courantNumber^8 + 3 \sqrt{1872 \courantNumber^2 - 1640 \courantNumber^4 - 4459 \courantNumber^6 + 8484 \courantNumber^8 - 5392 \courantNumber^{10} + 1216 \courantNumber^{12}}}{16 - 136 \courantNumber^2 + 321 \courantNumber^4 - 136 \courantNumber^6 + 16 \courantNumber^8}.
    \end{equation*}
    By symbolic computations, this point $\cosineShorthand_{\textnormal{min}}$ is in $[-1, 1]$ for $\courantNumber > 0.206...$.
    Below this threshold, the extremal point is on the boundary of $[-1, 1]$: for $\cosineShorthand = -1$, \eqref{eq:StabilityThirdLeapFrog} becomes $-16 (\courantNumber^2 - 1)(2\courantNumber^3+1)^3 > 0$, which is fulfilled. For $\cosineShorthand = 1$, which indeed we do not care about, \eqref{eq:StabilityThirdLeapFrog} reads $432 - 270 \courantNumber^2 > 0$, so $|\courantNumber| < \sqrt{8/5}$, which is true.
    Whenever $\courantNumber > 0.206...$, we plug $\cosineShorthand_{\textnormal{min}}$ into \eqref{eq:StabilityThirdLeapFrog} and solving the associated inequality in $\courantNumber$ using computer algebra provides the condition $|\courantNumber| < 1/2$.
    This achieves the proof.

\end{document}